\title{FI-modules and stability for representations of symmetric groups}
\author{Thomas Church, Jordan S. Ellenberg and Benson Farb \thanks{The authors 
    gratefully acknowledge support from the National Science
    Foundation.  The second author's work was partially supported by a Romnes Faculty Fellowship.}}
\theoremstyle{plain}
\newtheorem{theorem}{Theorem}[subsection]
\newtheorem{thm}[theorem]{Theorem}
\newtheorem{proposition}[theorem]{Proposition}
\newtheorem{prop}[theorem]{Proposition}
\newtheorem{lemma}[theorem]{Lemma}
\newtheorem{corollary}[theorem]{Corollary}
\newtheorem{question}[theorem]{Question}
\newtheorem{alphcorollary}{Corollary}[subsection]
\newtheorem{alphprob}[alphcorollary]{Problem}
\newtheorem{introtheorem}{Theorem}[section]
\newtheorem{introprob}[introtheorem]{Problem}
\newtheorem{introcorollary}[introtheorem]{Corollary}
\theoremstyle{definition}
\newtheorem{remark}[theorem]{Remark}
\newtheorem{definition}[theorem]{Definition}
\newtheorem{xample}[theorem]{Example}
\newtheorem{alphremark}[alphcorollary]{Remark}
\newtheorem{alphxample}[alphcorollary]{Example}
\newtheorem{introdefinition}[introtheorem]{Definition}
\newtheorem{alphintroremark}{Remark}[section]
\newcommand{\nc}{\newcommand}
\nc{\dmo}{\DeclareMathOperator}
\nc{\I}{\mathcal{I}}
\nc{\K}{\mathcal{K}}
\nc{\U}{\mathcal{U}}
\renewcommand{\L}{\mathcal{L}}
\nc{\Q}{\mathbb{Q}}
\nc{\R}{\mathbb{R}}
\nc{\Z}{\mathbb{Z}}
\nc{\C}{\mathbb{C}}
\nc{\N}{\mathbb{N}}
\nc{\F}{\mathbb{F}}
\nc{\cR}{\mathcal{R}}
\nc{\cM}{\mathcal{M}}
\nc{\cC}{\mathcal{C}}
\nc{\Schur}{\mathbb{S}}
\dmo{\GL}{GL}
\dmo{\Mat}{Mat}
\dmo{\PSL}{PSL}
\nc{\gin}{i}
\nc{\ga}{\Gamma}
\dmo{\Out}{Out}
\dmo{\Aut}{Aut}
\dmo{\Stab}{Stab}
\dmo{\wt}{weight}
\dmo\im{im}
\dmo{\Rep}{Rep}
\dmo\id{id}
\dmo\SL{SL}
\dmo\Sp{Sp}
\dmo\Mod{Mod}
\dmo\PMod{PMod}
\dmo\fd{fd}
\dmo\IA{IA}
\dmo\IAut{IA}
\nc{\IAn}{\IA_n}
\dmo\Sym{Sym}
\dmo\Ind{Ind}
\dmo\Res{Res}
\dmo\tr{tr}
\dmo\gr{gr}
\dmo\Free{Free}
\dmo\spn{span}
\dmo\End{End}
\dmo\Conf{Conf}
\dmo\op{op}
\dmo\coker{coker}
\dmo\Homeo{Homeo}
\dmo\Teich{Teich}
\dmo\rk{rank}
\dmo\ab{ab}
\dmo\rank{rank}
\dmo\Emb{Emb}
\dmo\Map{Map}
\dmo\Tor{\mathcal{T}}
\dmo\Torsion{Tor}
\def\fp{\mathfrak{p}}
\nc{\bwedge}{\textstyle{\bigwedge}}
\dmo\FI{FI}
\dmo\FB{FB}
\dmo\FIMod{FI-Mod}
\dmo\FBMod{FB-Mod}
\dmo\dMod{-Mod}
\def\kMod{k\dMod}
\dmo\alg{alg}
\dmo{\grd}{gr-}
\def\grkMod{\grd\!k\dMod}
\def\grFIMod{\grd\FIMod}
\nc\kT{k[T]}
\def\FIsharp{\FI\sharp}
\def\FIsharpMod{\FIsharp\dMod}
\dmo\coFI{co-FI}
\dmo\Top{Top}
\dmo\hTop{hTop}
\dmo\FIGroups{FI-Groups}
\dmo\Groups{Groups}
\nc\y[1]{{\tiny\yng(#1)}}
\nc\x{\hspace{0.1em}}
\newcommand{\m}{\mathbf{m}}
\newcommand{\n}{\mathbf{n}}
\newcommand{\npone}{\mathbf{n+1}}
\newcommand{\na}{\overline{\mathbf{a}}}
\newcommand{\kk}{\mathbf{k}}
\newcommand{\dd}{\mathbf{d}}
\newcommand{\ra}{\rightarrow}
\newcommand{\PP}{\mathbf{P}}
\renewcommand{\O}{\mathcal{O}}
\newcommand{\set}[1]{\{#1\}}
\newcommand{\beq}{\begin{displaymath}}
\newcommand{\eeq}{\end{displaymath}}
\newcommand{\beqn}{\begin{equation}}
\newcommand{\eeqn}{\end{equation}}
\newcommand{\tensor}{\otimes}
\DeclareMathOperator{\Hom}{Hom}
\DeclareMathOperator{\car}{char}
\renewcommand{\epsilon}{\varepsilon}
\nc{\coloneq}{\mathrel{\mathop:}\mkern-1.2mu=}
\newcommand*\cocolon{\nobreak \mskip6mu plus1mu \mathpunct{}\nonscript \mkern-\thinmuskip {:}\mskip2mu \relax}
\nc{\margin}[1]{\marginpar{\scriptsize #1}}
\nc{\para}[1]{\medskip\noindent\textbf{#1.}}
\nc{\hide}[1]{#1}
\nc{\into}{\hookrightarrow}
\nc{\onto}{\twoheadrightarrow}
\DeclareMathOperator*{\colim}{colim}
\DeclareMathOperator*{\hocolim}{hocolim}
\nc{\abs}[1]{\left\lvert#1\right\rvert}
\DeclareMathOperator{\stabD}{stab-deg}
\DeclareMathOperator{\injD}{inj-deg}
\DeclareMathOperator{\surjD}{surj-deg}
\DeclareMathOperator{\inv}{inv}
\DeclareMathOperator{\coinv}{coinv}
\DeclareMathOperator{\BGL}{BGL}
\nc{\stabT}{\preceq}
\nc{\uell}{\underline{\ell}}
\nc{\disjoint}{\sqcup}
\nc{\remove}[1]{#1} \nc{\antiremove}[1]{}
\nc{\arXiv}[1]{\href{http://arxiv.org/abs/#1}{arXiv:#1}}
\nc{\myemail}[1]{\href{mailto:#1}{\nolinkurl{#1}}}
\begin{document}

\maketitle
\begin{abstract}
In this paper we introduce and develop the theory of FI-modules.  We apply this theory to obtain new theorems about: 
\begin{itemize}
\item the cohomology of the configuration space of $n$ distinct ordered points on an arbitrary (connected, oriented) manifold;
\item the diagonal coinvariant algebra on $r$ sets of $n$ variables; 
\item the cohomology and tautological ring of the moduli space of $n$-pointed curves;
\item the space of polynomials on rank varieties of $n \times n$ matrices;
\item the subalgebra of the cohomology of the genus $n$ Torelli group generated by $H^1$;
\end{itemize}
and more.  The symmetric group $S_n$ acts on each of these vector spaces.   In most cases almost nothing is known about the characters of these representations, or even their dimensions.  We prove that in each fixed degree the character is given, for $n$ large enough, by a polynomial in the cycle-counting functions that is independent of $n$.  In particular, the dimension is eventually a polynomial in $n$.  In this framework, representation stability (in the sense of Church--Farb) for a sequence of $S_n$-representations is converted to a finite generation property for a single FI-module.

%In this framework, a complicated sequence of $S_n$-representations is encoded as a single FI-module, whose properties govern the stable behavior of this sequence. For example, Church--Farb's theory of representation stability for representations of $S_n$ follows from the finite generation of this FI-module.  FI-modules also shed light on classical results.  From this point of view, Murnaghan's theorem on the stability of Kronecker coefficients is not merely an assertion about a list of numbers, but becomes a structural statement about an single mathematical object.
\end{abstract}

\tableofcontents

\section{Introduction}

%so the introduction isn't 1.0.1 (shouldn't matter as long as you use \begin{introtheorem} etc.
\addtocounter{subsection}{1}

\bigskip

In this paper we develop a framework in which we can deduce strong constraints 
on naturally occurring sequences of $S_n$-representations using only elementary structural symmetries. These structural properties are encoded by objects we call \emph{FI-modules}. 

Let $\FI$ be the category whose objects are finite sets and whose morphisms are injections.  The category $\FI$ is equivalent to the category with objects $\n\coloneq\{1,\ldots,n\}$ for $n\in \N$ and morphisms from $\m$ to $\n$ being the injections $\m\into \n$.
\begin{introdefinition}[{\bf FI-module}]
\label{def:fimod}
An \emph{FI-module} over a commutative ring $k$ is a functor $V$ from $\FI$ to the category of $k$-modules.  We denote the $k$-module $V(\n)$ by $V_n$.
\end{introdefinition}

Since $\End_{\FI}(\n)=S_n$, any FI-module $V$ determines a sequence of $S_n$-representations $V_n$ with linear maps between them respecting the group actions.   One theme of this paper is the conceptual power of encoding this large amount of (potentially complicated) data into a single object $V$.

Many of the familiar notions from the theory of modules, such as submodule and quotient module, carry over to FI-modules.  In particular, there is a natural notion of \emph{finite generation} for FI-modules.

\begin{introdefinition}[{\bf Finite generation}]
An FI-module $V$ is \emph{finitely generated} if there is a finite set $S$ of elements in $\coprod_iV_i$ so that no proper sub-FI-module of $V$ contains $S$; see Definition~\ref{def:finitegen}.
\end{introdefinition}

It is straightforward to show that finite generation is preserved by quotients, extensions, tensor products, etc. Moreover, finite generation also passes to submodules when $k$ contains $\Q$,  so this is a robust property.   
\begin{introtheorem}[{\bf Noetherian property}]
\label{th:noetherian}
Let $k$ be a Noetherian ring containing $\Q$. % (for example, a field of characteristic $0$). 
The category of FI-modules over $k$ is \emph{Noetherian}: that is, any sub-FI-module of a finitely generated FI-module is finitely generated.
\end{introtheorem}
%Due to rapid developments since the initial version of this paper was first posted, we will not include a proof of Theorem~\ref{th:noetherian} here. 
A theorem of Snowden~\cite[Theorem 2.3]{Snowden} can be used to give another proof of Theorem~\ref{th:noetherian}.  In a sequel to the present paper, the authors, in joint work with Rohit Nagpal, removed the restriction in Theorem~\ref{th:noetherian} that $\Q\subset k$, proving that the category of FI-modules over \emph{any} Noetherian ring is Noetherian \cite[Theorem A]{CEFN}.  

\para{Examples of FI-modules} Finitely generated FI-modules are ubiquitous. To illustrate this we present in Table~\ref{table:examples} a variety of examples of FI-modules that arise in topology, algebra, combinatorics and algebraic geometry. In the course of this paper we will prove that each entry in this list is a finitely generated FI-module.   The exact definitions of each of these objects will be given later in the paper. Any parameter here not equal to $n$ should be considered fixed and nonnegative. 

\begin{table}[h]
\centering
\begin{tabular}{ll}
\underline{FI-module $V=\{V_n\}$} \qquad\quad& \underline{Description} \\

&\\
$1. \ H^i(\Conf_n(M);\Q)$ & $\Conf_n(M)=$ configuration space of $n$ distinct ordered points \\
& on a connected, oriented manifold $M$ (\S \ref{section:configspaces})\\
&\\
2.\ $R^{(r)}_J(n)$
& $J=(j_1,\ldots j_r)$, $R^{(r)}(n)=\bigoplus_{J}R^{(r)}_J(n)$= $r$-diagonal \\
&   coinvariant 
algebra on $r$ sets of  $n$ variables (\S \ref{section:coinvariants}) \\

&\\
3.\ $H^i(\cM_{g,n};\Q)$ & $\cM_{g,n}=$ moduli space of $n$-pointed genus $g\geq 2$ curves\ \ (\S \ref{subsection:mgn})\\
&\\
4.\ $\cR^i(\cM_{g,n})$ & $i^{\text{th}}$ graded piece of tautological ring of $\cM_{g,n}$\    (\S \ref{subsection:mgn})\\
&\\
5. $\O(X_{P,r}(n))_i$ & space of degree $i$ polynomials on $X_{P,r}(n)$, the rank variety\\ 
& of 
$n\times n$  matrices of $P$-rank $\leq r$  (\S \ref{subsection:determinantal})\\
&\\
6.\ $G(A_n/\Q)_i$ & degree $i$ part of the Bhargava--Satriano Galois closure\\ & of $A_n=\Q[x_1,\ldots,x_n]/(x_1,\ldots,x_n)^2$ (\S\ref{section:BS})\\
&\\
7.  $H^i(\I_n;\Q)_{\rm alb}$ & degree $i$ part of the subalgebra of $H^\ast(\I_n;\Q)$ generated by \\
& $H^1(\I_n;\Q)$, where $\I_n=$ genus $n$ Torelli group (\S \ref{subsection:albanese})\\
&\\
8. $H^i(\IA_n;\Q)_{\rm alb}$ & degree $i$ part of the subalgebra of $H^\ast(\IA_n;\Q)$ generated by \\
& $H^1(\IA_n;\Q)$, where $\IA_n=$ Torelli subgroup of $\Aut(F_n)$ (\S \ref{subsection:albanese})\\
&\\
9.\ $\gr(\Gamma_n)_i$ &  $i^{th}$ graded piece of associated graded Lie algebra of many \\
& groups $\Gamma_n$, including $\I_n$, $\IA_n$ and pure braid group $P_n$ (\S\ref{section:gradedlielcc})\\

\end{tabular}
\caption{Examples of finitely generated FI-modules}
\label{table:examples}
\end{table}

%\begin{introcorollary}
%
%Let $\{V_n\}$ be any of the sequences of vector spaces (1)--(9)  in Table~\ref{table:examples}.  Then there exists an integer $N$ and an integer-valued polynomial $P\in\Q[T]$ such that
%\[\dim(V_n)=P(n) \  \ {\rm for \ all}\  n\geq N.\]
%\end{introcorollary}

\para{Character polynomials}
Each of the vector spaces in Table~\ref{table:examples} admits a natural action of the symmetric group $S_n$.  As noted above, any FI-module $V$ provides a linear action of $\End_{\FI}(\n) = S_n$ on $V_n$.   If $V$ is finitely generated, the representations $V_n$ satisfy strong constraints, as we now explain.

For each $i\geq 1$ and any $n\geq 0$, let $X_i\colon S_n\to \N$ be the class function defined by 
\[X_i(\sigma)= \mbox{number of $i$-cycles in the cycle decomposition of $\sigma$}.\]
Polynomials in the variables $X_i$ are called \emph{character polynomials}.  Though perhaps not widely known, the study of character polynomials goes back to work of Frobenius, Murnaghan, Specht,  and Macdonald (see e.g.\  \cite[Example I.7.14]{Mac}).

It is easy to see that for any $n\geq 1$ the vector space of class functions on $S_n$ is spanned by character polynomials, so the character of any representation can be described by such a polynomial. 
For example, if $V\simeq \Q^n$ is the standard permutation representation of $S_n$, the character $\chi_V(\sigma)$ is the number of fixed points of $\sigma$, so $\chi_V=X_1$.  If $W=\bwedge^2V$ then $\chi_W=\displaystyle{\binom{X_1}{2}-X_2}$, since $\sigma\in S_n$ fixes those basis elements $x_i\wedge x_j$  for which the cycle decomposition of $\sigma$ contains the pair of fixed points $(i)(j)$, and negates those for which it contains the $2$-cycle $(i\ j)$.

One notable feature of these two examples is that the same polynomial describes the characters of an entire family of similarly-defined $S_n$-representations, one for each $n\geq 0$.  Of course, the expression of a class function on $S_n$ as a character polynomial is not unique; for example $X_N$ vanishes identically on $S_n$ for all $N > n$.  However, two character polynomials that agree on $S_n$ for \emph{infinitely many} $n$ must be equal, so for a sequence of class functions $\chi_n$ on $S_n$ it makes sense to ask about ``the'' character polynomial, if any, that realizes $\chi_n$.
\begin{introdefinition}[{\bf Eventually polynomial characters}]
A sequence $\chi_n$ of characters of $S_n$ is \emph{eventually polynomial} if there exist integers $r$, $N$ and a character polynomial $P(X_1,\ldots,X_r)$ such that 
\[
\qquad\qquad\chi_n(\sigma)=P(X_1,\ldots ,X_r)(\sigma)\quad \  \ {\rm for \ all}\  n\geq N\text{ and all }\sigma\in S_n.
\]
The \emph{degree} of the character polynomial $P(X_1,\ldots,X_r)$ is defined by setting $\deg(X_i)=i$.
\end{introdefinition}

One of the most striking properties of FI-modules in characteristic~0 is that the characters of any finitely generated FI-module have such a uniform description.

\begin{introtheorem}[{\bf Polynomiality of characters}] 
\label{thm:intro:persinomial}
\label{thm:intro:polydim}
Let $V$ be an FI-module over a field of characteristic~$0$. If $V$ is finitely generated then the sequence of characters $\chi_{V_n}$ of the $S_n$-representations $V_n$ is eventually polynomial.  
In particular, $\dim V_n$ is eventually polynomial.
\end{introtheorem}
The \emph{character polynomial} of a finitely generated FI-module is the polynomial $P(X_1,\ldots,X_r)$ which gives the characters $\chi_{V_n}$. It will always be integer-valued, meaning that $P(X_1,\ldots,X_r)\in \Z$ whenever $X_1,\ldots,X_r\in \Z$.
In situations of interest one can typically produce an explicit upper bound on the degree of the character polynomial by computing the \emph{weight} of $V$ as in Definition~\ref{de:weight}. In particular this gives an upper bound for the number of variables $r$. Moreover, computing the \emph{stability degree} of $V$ (\S\ref{section:stabilitydegree}) gives explicit bounds on the range $n\geq N$ where $\chi_{V_n}$ is given by the character polynomial. This converts the problem of finding all the characters $\chi_{V_n}$ into a concrete finite computation.  Note that the second claim in Theorem~\ref{thm:intro:persinomial} follow from the first since
\[\dim V_n=\chi_{V_n}({\rm id})=P(n,0,\ldots,0).\]

One consequence of Theorem~\ref{thm:intro:persinomial} is that $\chi_{V_n}$ only depends on ``short cycles'', i.e.\ on cycles of length $\leq r$.  This is a highly restrictive condition when $n$ is much larger than $r$. For example, the proportion of permutations in $S_n$ that have \emph{no} cycles of length $r$ or less is bounded away from $0$ as $n$ grows, and $\chi_{V_n}$ is constrained to be \emph{constant} on this positive-density subset of $S_n$.

Except for a few special (e.g.\ $M=\R^d$) and low-complexity  (i.e.\ small $i$, $d$, $g$, $J$, etc.) cases, little is known about the characters of the $S_n$-representations in Table~\ref{table:examples}, or even their dimension.  In many cases, closed form computations seem  out of reach.  By contrast, the following result gives an answer, albeit a non-explicit one, in every case.  Since each sequence (1)--(9) comes from a finitely generated FI-module, Theorem~\ref{thm:intro:persinomial} applies to each sequence, giving the following.

\begin{introcorollary}
\label{cor:polydim:app}
The characters $\chi_{V_n}$ of each of the sequences (1)--(9) of $S_n$-representations in Table~\ref{table:examples} are eventually polynomial.  In particular $\dim(V_n)$ is eventually polynomial.
\end{introcorollary}
\pagebreak

Apart from a few special cases, we do not know how to specify the polynomials produced by Corollary~\ref{cor:polydim:app}, although we can give explicit upper bounds for their degree.   A primary obstacle is that our theorems on finite generation depend on the Noetherian property of FI-modules proved in Theorem~\ref{th:noetherian}, and such properties cannot in general be made effective.

%It is easy to check that each sequence $\{V_n\}$ in Table~\ref{table:examples} comes from an FI-module $V$, and the machinery we develop makes it straightforward to check that each $V$ is finitely generated; the corollary then follows by applying Theorem~\ref{thm:intro:persinomial}.  
As a contrasting example, the dimension of $H^2(\overline{\cM}_{g,n};\Q)$ grows exponentially with $n$, where $\overline{\cM}_{g,n}$ is the Deligne-Mumford compactification of the moduli space of $n$-pointed genus $g$ curves.   Although the cohomology groups $H^2(\overline{\cM}_{g,n};\Q)$ do form an FI-module, this FI-module is not finitely generated.

\para{$\FIsharp$-modules}
It is often the case that FI-modules arising in nature carry an even more rigid structure.  An 
\emph{$\FIsharp$-module} is a functor from the category of \emph{partial injections} of finite sets to the category of $k$-modules (see \S\ref{ss:fisharp}).
In contrast with the category of FI-modules, the category of $\FIsharp$-modules is close to being 
semisimple (see Theorem~\ref{th:charFIsharp} for a precise statement).
%For $\FIsharp$-modules we improve the eventual polynomiality of $\dim(V_n)$ given in Theorem~\ref{thm:intro:polydim} to the following, which holds over fields of arbitrary characteristic.
\begin{introtheorem}
\label{thm:intro:FIsharp}
Let $V$ be an $\FIsharp$-module over any field $k$.  The following are equivalent:
\begin{enumerate}
\item $\dim(V_n)$ is bounded above by a polynomial in $n$.
\item $\dim(V_n)$ is exactly equal to a polynomial in $n$ for all $n\geq 0$.
\end{enumerate}
\end{introtheorem}
The power of Theorem~\ref{thm:intro:FIsharp} comes from the fact that in practice it is quite easy to prove that $\dim(V_n)$ is bounded above by a polynomial.  The rigidity behind this theorem holds even for $\FIsharp$-modules over $\Z$ or other rings. %, see Theorems~\ref{th:charFIsharp} and \ref{thm:polyFIsharp}. 
When $k$ is a field of characteristic~0, we strengthen Theorem~\ref{thm:intro:persinomial} to show that the character of $V_n$ is given by a single character polynomial for all $n\geq 0$.

We now focus in greater detail on two of the most striking applications of our results.  Many other applications are given in Sections~\ref{section:FIalgebrasandcoinv}, \ref{section:configspaces} and \ref{section:moduli}.

\para{Cohomology of configuration spaces}
 In \S\ref{section:configspaces} we prove a number of new theorems about configuration spaces on manifolds.
Let $\Conf_n(M)$ denote the configuration space of ordered $n$-tuples of distinct points in a space $M$: \[\Conf_n(M)\coloneq \big\{(p_1,\ldots,p_n)\in M^n\,\big|\, p_i\neq p_j\big\}\]
Configuration spaces and their cohomology are of wide interest in topology and in algebraic geometry; 
for a sampling, see Fulton-MacPherson~\cite{FuM}, McDuff~\cite{McD}, or Segal~\cite{Se}.

An injection $f\colon \m\into\n$ induces a map $\Conf_n(M)\to \Conf_m(M)$ sending
$(p_1,\ldots,p_n)$ to $(p_{f(1)},\ldots,p_{f(m)})$.
This defines a {contravariant} functor $\Conf(M)$ from $\FI$ to the category of topological spaces. 
Thus for any fixed $i\geq 0$ and any ring $k$, we obtain an FI-module $H^i(\Conf(M);k)$.  
Using work of Totaro~\cite{To}
we prove that when $M$ is a connected, oriented compact manifold of dimension $\geq 2$, the FI-module $H^i(\Conf(M);\Q)$ is finitely generated for each $i\geq 0$.  Moreover, when $\dim M\geq 3$ we bound the weight and stability degree of this FI-module to prove the following.   
\begin{introtheorem}
\label{thm:confncharpoly}
If $\dim M\geq 3$, there is a character polynomial $P_{M,i}$ of degree $\leq i$ so that
\[\chi_{H^i(\Conf_n(M);\Q)}(\sigma)=P_{M,i}(\sigma)\quad\text{for all }n\geq 2i\text{ and all }\sigma\in S_n.\]
In particular, the Betti number $b_i(\Conf_n(M))$ agrees with a polynomial of degree $i$ for all $n\geq 2i$.
\end{introtheorem}
If $M$ is the interior of a compact manifold with nonempty boundary, we prove that $H^i(\Conf(M);k)$ is in fact an $\FIsharp$-module for any ring $k$. This implies that the character polynomial $P_{M,i}$ from Theorem~\ref{thm:confncharpoly} agrees with the character of $H^i(\Conf_n(M);\Q)$ for \emph{all} $n\geq 0$. It also implies 
sharp constraints on the integral and mod-$p$ cohomology of $\Conf_n(M)$.\pagebreak

\begin{introtheorem}
\label{thm:intro:openbetti}
If $M$ is the interior of a compact manifold with nonempty boundary, each of the following invariants of $\Conf_n(M)$ is given by a polynomial in $n$ for all $n\geq 0$ (of degree $i$ if $\dim M\geq 3$, and of degree $2i$ if $\dim M=2$):
\begin{enumerate}
\item the $i$-th rational Betti number $b_i(\Conf_n(M))$, i.e.\ $\dim_\Q\Conf_n(M;\Q)$;
\item the $i$-th mod-$p$ Betti number of $\Conf_n(M)$, i.e.\ $\dim_{\F_p}\Conf_n(M;\F_p)$;
\item the minimum number of generators of $H^i(\Conf_n(M);\Z)$;
\item the minimum number of generators of the $p$-torsion part of $H^i(\Conf_n(M);\Z)$.
\end{enumerate}
\end{introtheorem}
We believe that each of these results is new.
Our theory also yields a new proof of \cite[Theorem~1]{Ch}, which was used by Church~\cite{Ch} to give the first proof of rational homological stability for unordered configuration spaces of arbitrary manifolds. %; this is false integrally when $M$ is closed.
Our proof here is in a sense parallel to that of \cite{Ch}, but the new framework simplifies the mechanics of the proof considerably, and allows us to sharpen the bounds on the stable range.
When $\dim(M)>2$ we improve Church's stable range from \cite[Theorem 5]{Ch} on homological stability for configurations of ``colored points'' in $M$.

As a simple illustration of the above results, the character of the $S_n$-representation $H^2(\Conf_n(\R^2);\Q)$ is given for all $n\geq 0$  by the single character polynomial
\[\chi_{H^2(\Conf_n(\R^2);\Q)}=2\binom{X_1}{3}+3\binom{X_1}{4}+\binom{X_1}{2}X_2-\binom{X_2}{2}-X_3-X_4.\]

\para{Diagonal coinvariant algebras}
In \S\ref{section:coinvariants} we obtain new results about a well-studied object in algebraic combinatorics: the multivariate diagonal coinvariant algebra.  The story begins in classical invariant theory.    Let $k$ be a field of characteristic~0, and fix $r\geq 1$.  For each $n\geq 0$ we consider the algebra of polynomials 
\[k[{\bf X}^{(r)}(n)]\coloneq k[x_1^{(1)},\ldots ,x_n^{(1)},\ldots ,x_1^{(r)},\ldots ,x_n^{(r)}]\] in $r$ collections of $n$ variables.  The symmetric group $S_n$ acts on this algebra diagonally:
\[\sigma\cdot x_j^{(i)} \coloneq x_{\sigma(j)}^{(i)}\]
 Chevalley and Weyl computed the $S_n$-invariants under this action, the so-called \emph{multisymmetric polynomials} (see \cite[II.A.3]{Weyl}).   Let $I_n$ be the the ideal in $k[{\bf X}^{(r)}(n)]$ generated by the
multisymmetric polynomials with vanishing constant term.    The \emph{$r$-diagonal coinvariant algebra} is the $k$-algebra 
\[R^{(r)}(n)\coloneq k[{\bf X}^{(r)}(n)]/I_n.\]

Each coinvariant algebra $R^{(r)}(n)$ is known to be a finite-dimensional $S_n$-representation.    These representations have been objects of intense study in algebraic combinatorics.   Borel proved that $R^{(1)}(n)$ is isomorphic as an $S_n$-representation to the cohomology $H^*(\GL_n\C/B;k)$ of the complete flag variety $\GL_n\C/B$. Furthermore Chevalley \cite[Theorem B]{Chevalley} proved that $R^{(1)}(n)$ is isomorphic to the regular representation of $S_n$, so $\dim R^{(1)}(n)=n!$.   Haiman~\cite{Hai} gave a geometric interpretation for  $R^{(2)}(n)$ and used it to prove the ``$(n+1)^{n-1}$ Conjecture'', which stated that
\beq
\dim(R^{(2)}(n))=(n+1)^{n-1}.
\eeq
For $r>2$ the dimension of $R^{(r)}(n)$ is not known.  

The polynomial algebra $k[{\bf X}^{(r)}(n)]$ naturally has an $r$-fold multi-grading, where a monomial has multi-grading $J=(j_1,\ldots,j_r)$ if its total degree in the variables $x_1^{(k)},\ldots,x_n^{(k)}$ is $j_k$. This multi-grading is $S_n$-invariant, and descends to an $S_n$-invariant multi-grading
\[R^{(r)}(n)=\bigoplus_JR^{(r)}_J(n)\]
on the $r$-diagonal coinvariant algebra $R^{(r)}(n)$. 
It is a well-known problem to describe these graded pieces as $S_n$-representations.

\begin{introprob}
 \label{problem:megaprob}
For each $r\geq 2, n\geq 1$, and $J=(j_1,\ldots ,j_r)$, compute the character of each $R^{(r)}_J(n)$ as an  
$S_n$-representation, at least for $n$ sufficiently large.   In particular, find a formula for $\dim(R^{(r)}_J(n))$.  
 \end{introprob}

For $r=1$ this problem was solved
independently by Stanley, Lusztig, and Kraskiewicz--Weyman (see \cite[\S7.1]{CF}).  
For $r=2$ Haiman~\cite{HaimanInventiones} gave a formula for these characters in terms of 
Macdonald polynomials and a ``rather mysterious operator'' (see \cite{HHLRU} for a discussion).  
 In the lowest degree cases the computation is elementary.  For example, it is easy to check that:
\begin{equation}
\label{eq:coinvariantexamples}
\begin{array}{rlrll}
\dim R^{(1)}_1(n)=&n-1\qquad\quad&\chi_{R^{(1)}_1(n)}=&X_1-1&\text{for }n\geq 1\\
\dim R^{(1)}_2(n)=&\!\!\displaystyle{\binom{n}{2}}-1&\chi_{R^{(1)}_2(n)}=&\!\!\displaystyle{\binom{X_1}{2}}+X_2-1\quad&\text{for }n\geq 2\\
\dim R^{(2)}_{11}(n)=&2\displaystyle{\binom{n}{2}}-n&\chi_{R^{(2)}_{11}(n)}=&\displaystyle{2\binom{X_1}{2}-X_1}&\text{for }n\geq 2
\end{array}
\end{equation}

Note that in these low-degree cases, once $n$ is sufficiently large the dimension of $R_J^{(r)}(n)$ is polynomial in $n$, as is its character.  For  $r>2$ it seems that almost nothing is known about Problem~\ref{problem:megaprob}, except in low-degree cases (see \cite[\S4]{Be} for a discussion of these cases).  However, the descriptions in \eqref{eq:coinvariantexamples} are just the simplest examples of a completely general phenomenon. 
\begin{introtheorem}[{\bf Characters of $R^{(r)}_J(n)$ are eventually polynomial in $n$}]
\label{theorem:intro:polynomial:answer}
For any fixed $r\geq 1$ and $J=(j_1,\ldots ,j_r)$, the characters $\chi_{R^{(r)}_J(n)}$ are eventually polynomial in $n$ of degree at most $|J|$. 
In particular there exists a polynomial $P_J^{(r)}(n)$ of degree at most $|J|$ such that \[\dim(R^{(r)}_J(n)) = P^{(r)}_J(n)\text{ for all }n\gg 0.\]
\end{introtheorem}

We prove that  (the dual of) $R_J^{(r)}$ is a finitely generated FI-module, and deduce Theorem~\ref{theorem:intro:polynomial:answer} from Theorem~\ref{thm:intro:persinomial}.
It is clear \emph{a priori} that $\dim(R^{(r)}_J(n))$ grows no \emph{faster} than $O(n^{|J|})$, but the fact that this dimension eventually coincides exactly with a polynomial is new, as is the polynomial behavior of the character of $R^{(r)}_J(n)$. We emphasize that beyond this bound on the degree, Theorem~\ref{theorem:intro:polynomial:answer} gives no information on the polynomials $P^{(r)}_{J}$(n).

\begin{introprob}[{\bf Character polynomials for diagonal coinvariant algebras}]
Describe the polynomials $P^{(r)}_{J}(n)$ whose existence is guaranteed by Theorem~\ref{theorem:intro:polynomial:answer}.
\end{introprob}\pagebreak

\para{Murnaghan's theorem} Given a field of characteristic~0 and a partition $\lambda=(\lambda_1,\ldots ,\lambda_\ell)$,  for any $n\geq
\abs{\lambda}+\lambda_1$ we define $V(\lambda)_n$ to be the irreducible representation of $S_n$ corresponding to the partition 
\[\lambda[n]\coloneq (n-\abs{\lambda},\lambda_1,\ldots,\lambda_\ell).\] 

Murnaghan's theorem states that, for any partitions $\lambda$ and $\mu$, there are coefficients $g^{\nu}_{\mu,\lambda}$ such that the tensor product $V(\lambda)_n\tensor V(\mu)_n$ decomposes into $S_n$-irreducibles as
\beq
V(\lambda)_n \tensor V(\mu)_n = \bigoplus_{\nu} g_{\lambda,\mu}^\nu V(\nu)_n
\eeq
for all sufficiently large $n$. The first complete proof of this theorem was given by Littlewood~\cite{Li} in 1957, but the coefficients $g^{\nu}_{\lambda,\mu}$ remain unknown in general.  We will show in \S\ref{section:murnaghan} that Murnaghan's theorem is an easy consequence of the fact that the tensor product of finitely generated FI-modules is finitely generated.  From this point of view, Murnaghan's theorem is not merely an assertion about a list of numbers, but becomes a structural statement about a  single mathematical object: the FI-module $V(\lambda)\otimes V(\mu)$.

\para{Connection with representation stability}  In  \cite{CF}, Church and Farb introduced the theory of \emph{representation stability}. Stability theorems in topology and algebra typically assert that in a given sequence of vector spaces with linear maps
\beq
\cdots \ra V_n \ra V_{n+1} \ra V_{n+2} \ra \cdots
\eeq
the maps $V_n\to V_{n+1}$ are isomorphisms for $n$ large enough.
The goal of representation stability is to provide a framework for generalizing these results to situations when each vector space $V_n$ has an action of the symmetric group $S_n$ (or other natural families of groups).  Representation stability provides a formal way of saying that the ``names'' of the $S_n$--representations $V_n$ stabilize, and a language to describe this stabilization rigorously; see \S\ref{section:repstab} below for the precise definition.  Representation stability has been proved in many cases, and gives new conjectures in others; see \cite{CF} and \cite{Ch}, as well as \cite{J1,WiRepstab}.  The theory of FI-modules converts representation stability into  a finite generation property.

\begin{introtheorem}[{\bf Finite generation vs.\ representation stability}]
\label{theorem:intro:FIequiv}
\label{th:FIequiv}
An FI-module $V$ over a field of characteristic~0 is finitely generated if and only if the sequence $\{V_n\}$ of $S_n$-representations is uniformly representation stable in the sense of \cite{CF} and each $V_n$ is finite-dimensional. In particular, for any finitely generated FI-module $V$, we have for sufficiently large $n$ a decomposition:
\[V_n\simeq \bigoplus c_\lambda V(\lambda)_n\]
where the coefficients $c_\lambda$ do not depend on $n$.
\end{introtheorem}

In the language of \cite{CF}, the new result here is that ``surjectivity'' implies ``uniform multiplicity stability'' for FI-modules.  This turns out to be very useful, because in practice finite generation is much easier to prove than representation stability. A key ingredient in this theorem is the ``monotonicity'' proved by the first author in \cite[Theorem 2.8]{Ch}, and as a byproduct of the proof we obtain that finitely generated FI-modules are monotone in this sense. Once again, the stability degree defined in Section~\ref{section:stabilitydegree} allows us, in many cases of interest, to replace ``sufficiently large $n$'' with an explicit range.\pagebreak

Our new point of view also simplifies the description of many representation-stable sequences.  As a simple example, in \cite{CF} we showed that
\beq
H^2(\Conf_n(\R^2);\Q)=V(1)_n^{\oplus 2} \oplus V(1,1)_n^{\oplus 2}  \oplus V(2)_n^{\oplus 2} \oplus V(2,1)_n^{\oplus 2} \oplus V(3)_n \oplus V(3,1)_n
\eeq
for all $n \geq 7$, and separate descriptions were necessary for smaller $n$.  In the language of the present paper we can simply write
\begin{equation}
\label{eq:h2conf1}
H^2(\Conf(\R^2);\Q) = M(\y{2,1}) \oplus M(\y{3,1})
\end{equation}
to simultaneously describe $H^2(\Conf_n(\R^2);\Q)$ for \emph{all} $n\geq 0$ (see \S\ref{sec:free} for notation, and \S\ref{section:configspaces} for the proof). One appealing feature here is that even the ``unstable'' portions of the sequence 
$H^2(\Conf_n(\R^2);\Q)$, meaning the qualitatively different descriptions that are necessary for $n<7$, are already encoded in the right-hand side of \eqref{eq:h2conf1}.
Based on computer calculations, John Wiltshire-Gordon (personal communication) has formulated a precise conjecture 
for the decomposition of
$H^i(\Conf(\R^2);\Q)$ as in \eqref{eq:h2conf1} for all $i\geq 0$.  Our results give a decomposition 
as in \eqref{eq:h2conf1} with $\R^2$ replaced by any open manifold, but we do not in general know the right-hand side explicitly.

\para{Relation with other work} We record here some relationships between the material in this paper and the work of others, both before and after this paper was first posted.

\begin{itemize}
\item Modules over EI-categories, namely those where all endomorphisms are isomorphisms, have been previously studied in the context of transformation groups (see e.g.\ \cite{Lue}). In particular, the analogue of Theorem~\ref{th:noetherian} with $\FI$ replaced by a \emph{finite} EI-category was proved by L\"{u}ck in \cite[Lemma 16.10b]{Lue}; however, he has explained to us that his methods cannot be extended to infinite categories such as $\FI$. Building on the present paper, Gan--Li~\cite{GanLi} have generalized Theorem~\ref{th:noetherian} to a wide class of infinite EI-categories, and Snowden--Sam~\cite{SnowdenSamGrobner} have extended the generalization \cite[Theorem A]{CEFN} to an even wider class of combinatorial categories.

\item The classification of $\FIsharp$-modules in Theorem~\ref{th:charFIsharp} below parallels the main theorem of Pirashvili in \cite{Pi}, which establishes a similar classification for modules over a larger category (that of finite pointed sets). Pirashvili's work was later extended by S{\l}omi\'{n}ska \cite{Slominska} and Helmstutler \cite{Helmstutler}, who give general axiomatizations of ``EI-categories with factorizations'' for which such a ``Dold--Kan'' classification theorem holds. In particular, using the ``factorization'' $\FIsharp=\FI\circ \FB\circ \FI^{\op}$, the equivalence of $\FIsharp$-modules and $\FB$-modules in Theorem~\ref{th:charFIsharp} can be deduced from \cite[Theorem~1.5(ii.4)]{Slominska} or \cite[Example~4.7 and Corollary~7.3]{Helmstutler}. We are grateful to Nicholas Kuhn for informing us of their work.

\item The category of FI-modules, along with related abelian categories, has been considered by researchers in the field of polynomial functors. See for example the recent work of Djament and Vespa~\cite{DV} which studies the stable behavior of the cohomology groups $H^i(S_n,V_n)$ for FI-modules $V$, or the earlier results of Pirashvili~\cite{Pi}.

\item The recent work of  Snowden and Sam on ``twisted commutative algebras'' has some overlap with our own, because FI-modules can be viewed as modules for the ``exponential'' twisted commutative algebra (see \cite{Aguiar}; for an introduction to TCAs, see \cite{SSsurvey}). In particular, as mentioned above, Snowden proves in \cite[Theorem 2.3]{Snowden} a Noetherian property for modules over a broad class of TCAs in characteristic $0$, which could be used to give a different proof of Theorem~\ref{th:noetherian}.
Applying this perspective to $\FIsharp$-modules yields examples of (divided power) $D$-modules both in characteristic~0 and in positive characteristic, as we hope to explain in a future paper.
More recently, Sam--Snowden have given in \cite{SS} a more detailed analysis of the algebraic structure of the category of FI-modules in characteristic~0.

\item Objects in Deligne's category $\Rep(S_t)$, where $t$ is a complex number, are closely related to sequences of $S_n$-representations whose characters are  polynomial (see Deligne~\cite{De}, Knop~\cite{Kn}, or Etingof's lecture~\cite{Et}).  Is there a sense in which a finite-type object of $\Rep(S_t)$ can be specialized to a finitely generated FI-module?  An interesting example is provided by the recent work of Ren and Schedler~\cite{RS} on spaces of invariant differential operators on symplectic manifolds.  Their results are consistent with the proposition that their sequence $\mbox{Inv}_n(V)$ carries the structure of a finitely generated FI-module.  Does it? 

\item A recent paper of Putman~\cite{Pu} proposes an alternative representation stability condition called ``central stability'' and proves that it holds for the mod-$p$ cohomology of congruence groups. We  prove with Rohit Nagpal in \cite{CEFN} that in the language of FI-modules, central stability is equivalent to finite presentation. We also extend Theorem~\ref{th:noetherian} 
from characteristic 0 to  any Noetherian ring in \cite{CEFN}. Therefore any finitely generated FI-module is finitely presented, so finite generation for FI-modules, representation stability for $S_n$-representations (in characteristic 0), and central stability are all equivalent.

\item In \cite{CEF2}, we apply Theorem~\ref{th:noetherian} to the \'{e}tale cohomology of hyperplane arrangements to prove asymptotic stabilization for certain statistics for polynomials over finite fields. In \cite{CP}, Church--Putman extend the notion of finite generation from FI-modules to nonabelian FI-groups (see \S\ref{subsection:albanese}), and use this to construct generating sets for the Johnson filtration of the Torelli group with bounded complexity. In \cite{CE}, Church--Ellenberg prove a quantitative bound on FI-homology which says that in an approximate sense, FI-modules behave as if the homological dimension of FI were 1. The results of this paper on $S_n$-representations have been extended to other Weyl groups by Wilson by constructing and analyzing the category of $\FI_W$-modules~\cite{JennyFIW1,JennyFIW2}. The analogous categories where $S_n$ is replaced by $\GL_n(R)$ for a finite ring $R$ have been studied by Putman--Sam~\cite{PutmanSam}; modules over these categories turn out to have many features in common with FI-modules, including the Noetherian property of Theorem~\ref{th:noetherian} and \cite[Theorem A]{CEFN}.  

\end{itemize}

\begin{alphintroremark}
The arXiv version of this paper is slightly expanded from the published version that will appear in \emph{Duke Math.\ J}. All theorems, propositions, remarks, and so on are numbered consistently across both versions, so that for example Theorem~\ref{thm:persinomial} refers to the same theorem in both versions; those that appear only in the arXiv version, such as Remark~\ref{rem:projective}, are numbered separately.
\end{alphintroremark}

\para{Acknowledgements} We are grateful to Wolfgang L\"{u}ck for conversations regarding his work in \cite{Lue} and its relation with the present paper. We thank Persi Diaconis for many helpful conversations regarding the polynomials considered in Theorem~\ref{thm:persinomial}. We also thank Marcelo Aguiar, Ralph Cohen, Pierre Deligne, Aur\'{e}lien Djament, Pavel Etingof,  James Griffin, Jim Haglund, Rita Jimenez Rolland, Nicholas Kuhn, Aaron Lauve, Daniel Litt, Jacob Lurie, Andrew Putman, Claudiu Raicu, Steven Sam, Andrew Snowden, Bernd Sturmfels, David Treumann, Julianna Tymoczko, Christine Vespa, Jennifer Wilson, and John Wiltshire-Gordon for helpful discussions. We are especially grateful to Rosona Eldred and the Hamburg/Bremen reading group for numerous helpful comments on an earlier version of this paper.  Finally, we would like to thank the referees, whose comments and suggestions have greatly improved the exposition in this paper.  %The first author's work was supported in part by NSF grants DMS-1103807 and DMS-1350138. The second author's work was supported in part by NSF grant DMS-1101267 and a Romnes Faculty Fellowship.

\section{FI-modules:  basic properties}
\label{section:setup}

In this section we develop the basic theory of FI-modules, including definitions and foundational properties. %We also introduce the closely related category of $\FIsharp$-modules, which are significantly more rigid and well-behaved than FI-modules, especially for coefficient rings other than fields of characteristic $0$. 

\subsection{FI-objects} 
\label{ss:notation}
Fix a commutative ring $k$. The case when $k$ is a field will be foremost in our minds, and our notation is chosen accordingly. For example, by a \emph{representation over $k$ of a group $G$} we mean a $k$-module $V$ together with an action of $G$ on $V$ by $k$-module automorphisms, i.e.\ a $kG$-module.  The \emph{dual representation} $V^*$ is the $k$-module $\Hom_k(V,k)$ together with its induced $G$-action.

Given two groups $G$ and $H$, if $V$ is a representation of $G$, then we denote by 
$V\boxtimes k$ the same $k$-module $V$ interpreted as a representation of $G\times H$ on which $H$ acts trivially. All tensor products are taken over $k$ unless otherwise specified.

%Throughout the paper both  $A\subset S$ or $A\subseteq S$ mean that $A$ is a subset of $S$, not necessarily proper; we write $A\subsetneq S$ to indicate that $A$ is a proper subset of $S$.

\para{FI-modules}
Recall from Definition~\ref{def:fimod} that $\FI$ denotes the category whose objects are \textbf{f}inite sets and whose morphisms from $S$ to $T$ are \textbf{i}njections $S\into T$. We will later consider three other categories with the same objects: $\FB$ will denote the category whose morphisms from $S$ to $T$ are the bijections $S\xrightarrow{\approx}T$; $\coFI$ will denote the opposite category $\FI^{\text{op}}$, whose morphisms from $S$ to $T$ are the injections $T\into S$; and $\FIsharp$ will denote the category whose morphisms from $S$ to $T$ are the \emph{partial injections} from $S$ to $T$ (see \S\ref{ss:fisharp} for details).

\begin{definition}
The category of FI-modules is the category $\FIMod\coloneq [\FI,\kMod]$ of functors from $\FI$ to the category of $k$-modules, with natural transformations of functors as morphisms.
\end{definition}
If $V$ is an FI-module and $S$ is a finite set, we write $V_S$ for $V(S)$, and write $V_n$ for $V(\n)$. Given a morphism $f\colon S\to T$ or $f\colon\m\to\n$, we often write $f_*\colon V_S\to V_T$ or $f_*\colon V_m\to V_n$ for the map $V(f)$.
For any FI-module $V$, the endomorphisms $\End_{\FI}(\n)\simeq S_n$ act on $V_n$, so we can  consider $V_n$ as an $S_n$-representation. Given a map $V\to W$ of FI-modules, the induced map $V_n\to W_n$ is $S_n$-equivariant.

\begin{remark}
$\FIMod$ inherits the structure of an abelian category from $\kMod$, with all notions such as kernel, cokernel, subobject, quotient object, %limit, colimit,
injection, and surjection being defined ``pointwise'' from the corresponding notions in $\kMod$.
% meaning that a property holds for an FI-module $V$ if and only if it holds for each $V_S$.  
This is true for any category of functors from any small category to an abelian category \cite[A.3.3]{We}. In particular, the same holds for $\FB$-modules, $\FIsharp$-modules, etc.; see Remark~\ref{remark:FInotation} below.
 
For example, a map $F\colon V\to W$ of FI-modules is a surjection if and only if the maps $F_S\colon V_S\to W_S$ are surjections for all finite sets $S$. Since every finite set is isomorphic to some $\n$, an equivalent condition is that $V_n\to W_n$ is surjective for all $n\in \N$. We will frequently use this equivalence to verify some property of an FI-module $V$ by verifying it for $V_n$ for all $n\in \N$. For another example, the kernel of this map $F$ is an FI-module $\ker[F]$, which to $S$ assigns the $k$-module $\ker[F]_S\coloneq \ker(F_S\colon V_S\to W_S)$. To a morphism $f\colon S\into T$, it assigns the restriction of $V(f)\colon V_S\to V_T$ to $\ker(F_S)$ (which must have image in $\ker(F_T)$ if $F$ is a map of FI-modules).
\end{remark}

\begin{remark}
\label{rem:nonsemisimple}
In some sense the category of FI-modules might be thought of as a ``limit'' of the category of representations of $S_n$ as $n \ra \infty$.  But care is necessary.  For example, the category $\FIMod$ is not semisimple, even when $k$ is a field of characteristic~$0$.

Consider the FI-module $V$ with $V_S=k$ for all finite sets $S$ and $V_f=\id$ for all injections $f$. Let $W$ be the FI-module with $W_\emptyset=k$ and $W_S=0$ if $S\neq \emptyset$, with $W_f=0$ except for $f\colon \emptyset\to \emptyset$. There is an obvious surjection of FI-modules $F\colon V\onto W$ defined by $F_\emptyset=\id\colon k\to k$ and $F_S=0\colon k\to 0$ for $S\neq \emptyset$. However, this surjection is not split: for $f\colon \mathbf{0}\into \mathbf{1}$ the map $f_*\colon W_0\to W_1$ is zero while $f_*\colon V_0\to V_1$ is an isomorphism, so no nonzero map $s\colon W\to V$ can exist.
\end{remark}

The category of FI-modules is closed under any covariant functorial construction on $k$-modules, by applying functors pointwise. For example, if $V$ and $W$ are FI-modules, then $V\oplus W$ and  $V\otimes W$ are FI-modules. Concretely, the FI-module $V\otimes W$ assigns to a finite set $S$ the $k$-module $V_S\otimes W_S$, and to an inclusion $f\colon S\into T$ the homomorphism $V(f)\otimes W(f)\colon V_S\otimes W_S\to V_T\otimes W_T$. Symmetric products and exterior products of FI-modules are also FI-modules, once we fix a functorial definition of these constructions. The \emph{dual} $V^*$ of an FI-module $V$, on the other hand, is a co-FI-module; to a finite set $S$ the co-FI-module assigns the $k$-module $V(S)^*$, and to an inclusion $f\colon S\into T$ the homomorphism $V(f)^*\colon V(T)^*\to V(S)^*$.

\begin{remark}
\label{remark:FInotation}
We use the same notation for other categories: for example, an \emph{FB-module} is a functor from $\FB$ to the category of $k$-modules; an \emph{FI-group} is a functor from $\FI$ to the category of groups; an \emph{$\FIsharp$-space} is a functor from $\FIsharp$ to the category of topological spaces, and so on. In all these cases, we take natural transformations as our morphisms, so $\FBMod\coloneq [\FB,\kMod]$ is the category of FB-modules, $\FIGroups\coloneq [\FI,\Groups]$ is the category of FI-groups, and so on.
\end{remark}

\begin{remark}
By a \emph{graded $k$-module} we always mean an $\N$-graded $k$-module. In keeping with the approach of this paper, we can think of a graded $k$-module as a functor $A\colon \N\to \kMod$, where $\N$ is the discrete category with objects $\N$, so that $A_i=A(i)$ is the part of $A$ in grading $i$. A morphism $A\to B$ of graded $k$-modules is a natural transformation, i.e.\ a collection of homomorphisms $A_i\to B_i$ (so we only consider morphisms of ``degree 0''). To avoid phrases such as ``FI-(graded $k$-module)'', we will say instead ``graded FI-module'', and so on. That is, a graded FI-module $W$ is a functor $W\colon \FI\to \grkMod$, and $\grFIMod=[\FI,\grkMod]$ is the category of graded FI-modules. If $W$ is a graded FI-module, restricting to a fixed $i\in \N$ yields an FI-module $W^i$.
\end{remark}

\begin{remark}
\label{remark:FIalgebras}
A \emph{graded $k$-algebra} is a graded $k$-module $A$ endowed with a unital multiplication $A\otimes A\to A$. We require that this multiplication be unital, but it need not be associative. An ideal $I\subset A$ is a graded submodule $I\subset A$ such that under multipliciation $A\otimes I$ and $I\otimes A$ map into $I$; in this case, the quotient $A/I$ is a graded $k$-algebra. In particular, a \emph{graded FI-algebra} $V$ is a functor from $\FI$ to the category of graded $k$-algebras, and an \emph{FI-ideal} $I\subset V$ yields for each finite set $S$ an ideal $I_S\subset V_S$.
\end{remark}

\subsection{Free FI-modules}
\label{sec:free} In this section we define certain families of FI-modules that  can be thought of as the FI-modules ``freely generated'' by an $S_a$-representation.

\begin{definition}[{\bf FB-modules}] 
Let $\FB$ denote the category of \textbf{f}inite sets and \textbf{b}ijections. An FB-module $W$ is a functor $W\colon \FB\to \kMod$.
\end{definition}
The category $\FBMod=[\FB,\kMod]$ of FB-modules is also known as the category of \emph{vector species} (see e.g.\ \cite{Aguiar}). 
Since $\End_{\FB}(\n)\simeq S_n$, an FB-module $W$ determines a collection of $S_n$-representations $W_n$ for all $n\in \N$, just as an FI-module did. But in contrast with FI-modules, an FB-module $W$ is \emph{determined} by the $S_n$-representations $W_n$, with no additional data such as maps between them. Indeed, we can consider any $S_a$-representation $W_a$ as an FB-module by setting $W_n=0$ for $n\neq a$, and extending to all finite sets $T$ by choosing isomorphisms $T\simeq \n$. With this convention we have an isomorphism of FB-modules $W\simeq \bigoplus_{n=0}^\infty W_n$.

The obvious inclusion $\FB\into \FI$ induces a forgetful functor $\pi\colon \FIMod\to \FBMod$, which remembers the $S_n$-actions on $V_n$ but forgets the maps $V_m\to V_n$ for $m<n$.

\begin{definition}[{\bf The FI-module {\boldmath$M(W)$}}]
\label{def:MW} We define the functor $M(-)\colon \FBMod \to \FIMod$ as  the left adjoint of $\pi\colon \FIMod\to \FBMod$. Explicitly, if $W$ is an FB-module, then by \cite[(Eq.\ X.3.10)]{MacL} the FI-module $M(W)$ satisfies
\begin{equation}
\label{eq:MWdecomp}
M(W)_S=\colim_{\substack{(T\in \FB,\\f\colon T\into S)}} W_T=\bigoplus_{T\subseteq S} W_T,
\end{equation} with the map $f_*\colon M(W)_S\to M(W)_{S'}$ induced by $f\colon S\into S'$ being the sum of $(f|_T)_*\colon W_T\to W_{f(T)}$.
\end{definition}
The unit of this adjunction is the inclusion of FB-modules $W\into \pi(M(W))$ which sends $W_S\into \bigoplus_{T\subseteq S}W_T=M(W)_S$. The co-unit provides a surjection of FI-modules $\bigoplus_{n\geq 0} M(V_n)\onto V$.

From \eqref{eq:MWdecomp} we see that $M(-)$ is an exact functor. Also, as an $S_n$-representation, we can identify
\begin{equation}
\label{eq:MWinduced}
M(W)_n\simeq \bigoplus_{a\leq n} \Ind_{S_a\times S_{n-a}}^{S_n}W_a\boxtimes k.
\end{equation}
In particular, when $k$ is a field, the dimension of the vector space $M(W)_n$ is:
\[\dim M(W)_n=\sum_{a\geq 0}\dim(W_a)\cdot\binom{n}{a}\]
This shows that for any $W$, the dimension of $M(W)_n$ is given for all $n\geq 0$ by a single polynomial in $n$; we will see in Theorem~\ref{thm:polyFIsharp} that a similar statement 
holds for the character of $M(W)_n$.

Consider the regular $S_m$-representation $k[S_m]$ as an FB-module.  The resulting functor assigns to a finite set $T$ the $k$-module with basis indexed by the bijections $\m\to T$. Applying the functor $M(-)$, we obtain the following important family of FI-modules.

\begin{definition}[\textbf{The FI-module \boldmath$M(m)$}]
\label{def:Mm}
For any $m\geq 0$, we denote by $M(m)$ the ``representable'' FI-module $k[\Hom_{\FI}(\m,-)]\simeq M(k[S_m])$. For any finite set $S$, the $k$-module $M(m)_S$ has basis indexed by the injections $\m\into S$; in other words, by ordered sequences of $m$ distinct elements of $S$. A map of FI-modules $F\colon M(m)\to V$ is determined by the choice of an element $v=F(\id_\m)\in V_m$, via the adjunction defining $M(-)$.
\end{definition}

\begin{xample}
Since there is precisely one injection $\emptyset =\mathbf{0}\into S$ for every finite set $S$, the FI-module $M(0)$ is the constant functor with value $k$; as an $S_n$-representation $M(0)_n\simeq k$ is the $1$-dimensional trivial representation. Since injections $\mathbf{1}\into S$ correpond simply to elements of $S$, the FI-module $M(1)$ assigns to a finite set $S$ the free $k$-module with basis $S$; as an $S_n$-representation $M(1)_n\simeq k^n$ is the usual permutation representation.
\end{xample}

The permutation group $S_m\simeq \End_{\FI}(\m)$ acts on $M(m)$ by FI-module automorphisms (as is true of any representable functor). This action is quite concrete: it is simply the standard action of $S_m$ on ordered sequences $(s_1,\ldots,s_m)$ by reordering the elements of a sequence.

\para{Irreducible representations of $S_n$ in characteristic~0} When $k$ is a field of characteristic $0$, every $S_n$-representation decomposes as a direct sum of irreducible representations. In characteristic 0 every irreducible representation of $S_n$ is
defined over $\Q$. As a result this decomposition does not depend on the field $k$; moreover, every representation of $S_n$ is self-dual.

Recall that the irreducible representations of $S_n$ over any field $k$ of characteristic $0$ are
classified by the partitions $\lambda$ of $n$. A \emph{partition} of
$n$ is a sequence $\lambda=(\lambda_1\geq \cdots \geq \lambda_\ell>0)$
with $\lambda_1+\cdots+\lambda_\ell=n$; we write $\abs{\lambda}=n$ or
$\lambda\vdash n$.  We denote by $V_\lambda$ the irreducible $S_n$-representation corresponding to the partition $\lambda\vdash n$.  The representation $V_\lambda$ can be obtained as the image $k[S_n] \cdot
c_\lambda$ of a certain idempotent $c_\lambda$ (the ``Young symmetrizer'') in the group algebra $k[S_n]$.  

\begin{definition}[{\bf The irreducible representation $V(\lambda)_n$}]
\label{def:Vlambdan}
Given a partition $\lambda$, for any $n\geq
\abs{\lambda}+\lambda_1$ we define the \emph{padded partition}
\[\lambda[n]\coloneq (n-\abs{\lambda},\lambda_1,\ldots,\lambda_\ell).\] 
For $n\geq \abs{\lambda}+\lambda_1$, we define $V(\lambda)_n$ to be the irreducible $S_n$--representation
\[V(\lambda)_n\coloneq V_{\lambda[n]}.\]
\end{definition} Since every partition of $n$ can be written as $\lambda[n]$ for a unique partition $\lambda$,  every irreducible $S_n$-representation is isomorphic to $V(\lambda)_n$ for a unique partition $\lambda$.  We sometimes replace $\lambda$ by its corresponding Young diagram.

In this notation the trivial representation of $S_n$ is $V(0)_n$, and the standard $(n-1)$-dimensional irreducible representation is $V(1)_n=V(\y{1})_n$ for all $n \geq 2$.  Many of the usual linear algebra operations behave well with respect to this notation. For example, the identities \[\bwedge^3 V(\y{1})_n\simeq V\Big(\x\y{1,1,1}\x\Big)_n\qquad \text{and}\qquad V(\y{1})_n\otimes V(\y{1})_n \simeq V\big(\x\y{1,1}\x\big)_n\oplus V(\y{2})_n\oplus V(\y{1})_n\oplus V(0)_n\] hold whenever both sides are defined, namely whenever $n \geq 4$. For a similar decomposition of any tensor product in general, see Theorem~\ref{thm:Murnaghan} in \S\ref{section:SchurMurnaghan} below.

\begin{definition}[{\bf The FI-module {\boldmath$M(\lambda)$}}]
When $k$ is a field of characteristic~0, given a partition $\lambda$ we write $M(\lambda)$ for the FI-module $M(\lambda)\coloneq M(V_\lambda)$.
\end{definition}
Many natural combinatorial constructions correspond to FI-modules of this form. For example, since $\y{3}$ is the trivial representation of $S_3$, a basis for $M(\y{3})_S$ is given by the 3-element subsets of $S$; in other words, the FI-module $M(\y{3})$ is the linearization of the functor $S\mapsto \binom{S}{3}$. Similarly, the FI-module $M\big(\x\y{1,1}\x\big)$ is the linearization of the functor sending $S$ to the collection of oriented edges $x\to y$ between distinct elements of $S$ (oriented in the sense that $y\to x$ is the negative of $x\to y$).

\remove{
\begin{alphremark}[{\bf Projective FI-modules}]
\label{rem:projective}
A \emph{projective FI-module} is a projective object in the abelian category $\FIMod$.  
It follows from general considerations (see \cite[Exercise 2.3.8]{We}) that the category $\FIMod$ has enough projectives. Indeed if  $P_n$ is a projective $k[S_n]$-module, then $M(P_n)$  is projective, since $M(-)$ is the left adjoint to an exact functor and thus  preserves projectives \cite[Proposition 2.3.10]{We}.
In fact, it is not hard to show that every projective FI-module is a direct sum $\bigoplus_{n\geq 0} M(P_n)$ where $P_n$ is a projective $k[S_n]$-module; see e.g.\ \cite[Corollary 9.40]{Lue}.
\end{alphremark}
}

\subsection{Generators for an FI-module}
\label{ss:generators}

\begin{definition}[{\bf Span}]
If $V$ is an FI-module and $\Sigma$ is a subset of the disjoint union $\coprod V_n$, the \emph{span} $\spn_V(\Sigma)$ is the minimal sub-FI-module of $V$ containing each element of $\Sigma$.  We say that $\spn_V(\Sigma)$ is the \emph{sub-FI-module of $V$ generated by $\Sigma$}. We sometimes write $\spn(\Sigma)$ when there is no ambiguity.  We write $\spn(V_{\leq m})$ for $\spn\big(\coprod_{k\leq m} V_k\big)$ and $\spn(V_{< m})$ for $\spn\big(\coprod_{k< m} V_k\big)$.
\end{definition}

\begin{lemma}
\label{lemma:Mspan}
Given an FI-module $V$, a choice of element $v\in V_m$ determines a map $M(m)\to V$. The image of this map is $\spn_V(v)$. More generally, if $\Sigma$ is the disjoint union of $\Sigma_n\subset V_n$, the image of the natural map $\bigoplus_{n\geq 0} M(n)^{\oplus \Sigma_n}\to V$ is $\spn_V(\Sigma)$. 
\end{lemma}
\begin{proof}
Let $W$ be the image of the map $M(m)\to V$ sending  $\id_\m\in M(m)_m$ to $v\in V_m$. Certainly $v\in W_m$, so $\spn_V(v)\subset W$. A basis for $M(m)_S$ consists of the morphisms $f\colon \m\into S$, and $f\in M(m)_S$ is sent to $f_*(v)\in V_S$ (as it must be for $M(m)\to V$ to be a map of FI-modules). Thus $W_S$  is the submodule of $V_S$ spanned by the elements $f_*(v)$, where $f$ ranges over injections $f\colon \m\into S$. Since any sub-FI-module of $V$ containing $v$ must contain all these elements $f_*(v)$, we have $W\subset \spn_V(v)$ as well. This completes the proof for $\spn_V(v)$; the argument for general $\Sigma$ is identical.
\end{proof}

Taking $\Sigma_n=V_n$ in Lemma~\ref{lemma:Mspan} gives a surjection $\bigoplus_{n\geq 0}M(n)^{\oplus V_n}\onto V$, which is a slightly less efficient version of the surjection $\bigoplus_{n\geq 0}M(V_n)\onto V$ from Definition~\ref{def:MW}; this amounts to including every element of $V$ in our generating set.\pagebreak

\begin{definition}[{\bf Generation in degree {\boldmath $\leq m$}}]
We say that an FI-module $V$ is \emph{generated in degree $\leq m$} if $V$ is generated by elements of $V_k$ for $k\leq m$, i.e.\ if $\spn(V_{\leq m})=V$.
\end{definition}

\begin{definition}[{\bf Finite generation}]
\label{def:finitegen}
We say that an FI-module $V$ is \emph{finitely generated} if there is a finite set of elements $v_1,\ldots,v_k$ with $v_i\in V_{m_i}$ which \emph{generates} $V$, meaning that $\spn(v_1,\ldots,v_k)=V$.
\end{definition}

We will say that $V$ is \emph{finitely generated in degree $\leq m$} if $V$ is generated in degree $\leq m$ and $V$ is finitely generated; in other words, there exists a finite generating set $v_1,\ldots,v_k$ with $v_i\in V_{m_i}$ for which $m_i\leq m$ for all $i$. Lemma~\ref{lemma:Mspan} implies the following characterization of finitely generated FI-modules in terms of the free FI-modules $M(m)$. 
\begin{proposition}[{\bf Finite generation in terms of \boldmath$M(m)$}]
\label{prop:charfg}
An FI-module $V$ is finitely generated if and only if it admits a surjection $\bigoplus_i M(m_i)\onto V$ for some finite sequence of integers $\{m_i\}$. \end{proposition}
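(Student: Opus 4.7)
The plan is to verify the two directions separately, using the correspondence from Remark~\ref{rem:Mspan} between maps out of free FI-modules $M(m)$ and elements of an FI-module.

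For the forward direction, suppose $V$ is finitely generated, so that $V = \spn_V(v_1, \ldots, v_k)$ for some $v_i \in V_{n_i}$. Remark~\ref{rem:Mspan} provides, for each $i$, a map $\phi_i \colon M(n_i) \to V$ whose image is precisely $\spn_V(v_i)$. Taking the sum yields a map
\[\phi \colon \bigoplus_{i=1}^k M(n_i) \longrightarrow V\]
whose image is $\sum_i \spn_V(v_i) = \spn_V(v_1, \ldots, v_k) = V$, so $\phi$ is surjective.

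For the backward direction, I first observe that each $M(m)$ is itself finitely generated by a single element, namely the identity injection $\id_{\mathbf{m}} \in \Hom_{\FI}(\mathbf{m}, \mathbf{m})$ viewed as an element of $M(m)_m$. Indeed, $M(m)_n$ has basis $\Hom_{\FI}(\mathbf{m}, \mathbf{n})$, and each basis vector $f$ equals $f_*(\id_{\mathbf{m}})$, so $\spn_{M(m)}(\id_{\mathbf{m}}) = M(m)$. Now given a surjection $\pi \colon \bigoplus_{i=1}^k M(m_i) \twoheadrightarrow V$ indexed by a finite set, set $w_i = \pi(\id_{\mathbf{m_i}}) \in V_{m_i}$. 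Any sub-FI-module $W \subseteq V$ containing $\{w_1,\ldots,w_k\}$ pulls back to a sub-FI-module of $\bigoplus_i M(m_i)$ containing each $\id_{\mathbf{m_i}}$, hence containing all of $\bigoplus_i M(m_i)$; by surjectivity of $\pi$ this forces $W = V$. Thus $V = \spn_V(w_1, \ldots, w_k)$ is finitely generated.

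There is essentially no obstacle here: once Remark~\ref{rem:Mspan} is in hand, the proposition is immediate in both directions. The only conceptual point to flag is that FI-module homomorphisms preserve spans (since the image of a sub-FI-module is a sub-FI-module), which is what makes ``finite generation passes through surjections'' work cleanly.
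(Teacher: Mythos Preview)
Your proof is correct and is exactly the argument the paper intends: the paper states that Proposition~\ref{prop:charfg} follows directly from Remark~\ref{rem:Mspan}, and your write-up simply makes both directions of that implication explicit. There is nothing to add.
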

One consequence of Proposition~\ref{prop:charfg} is that if $V$ is a finitely generated FI-module  then $V_S$ is a finitely generated $k$-module for any finite set $S$. Proposition~\ref{prop:charfg} will allow us to reduce questions of finite generation to the corresponding question for the particular FI-modules $M(m)$, as in the proof of the following proposition.

\begin{prop}[{\bf Tensor products of f.g.\ FI-modules}]  
\label{pr:tensor}
If $V$ and $W$ are finitely generated FI-modules, so is $V \tensor W$. If $V$ is generated in degree $\leq m_1$ and $W$ is generated in degree $\leq m_2$, then $V \tensor W$ is generated in degree $\leq m_1+m_2$.  
\end{prop}
 
\begin{proof}
By Proposition~\ref{prop:charfg}, it suffices to show that $U\coloneq M(m_1) \tensor M(m_2)$ is finitely generated in degree $\leq m_1 + m_2$. Each $k$-module $U_n$ is certainly finitely generated (of rank $\binom{n}{m_1}\cdot\binom{n}{m_2}$), so it suffices to prove that $U=\spn(U_{\leq m_1+m_2})$. A basis for $U_S$ is given by pairs $(f_1\colon \m_1\into S, f_2\colon \m_2\into S)$. The basis element $(f_1,f_2)$ lies in the image of $U_T$, where $T=\im(f_1)\cup \im(f_2)$. Since $\abs{T}$ is at most $m_1+m_2$, this shows that $\spn(U_{\leq m_1+m_2})$ is all of $U$, as desired.
\end{proof}

\para{\boldmath$H_0(V)$ and generators for \boldmath$V$}
We conclude this section with another perspective on generators for an FI-module $V$ that will be used in later sections. Given an FB-module $W$, we can consider $W$ as an FI-module by declaring $f_*\colon W_S\to W_T$ to be 0 whenever $f\colon S\into T$ is not bijective. This defines an inclusion of categories $\FBMod\into \FIMod$. 
\begin{definition}[{\bf The functor ${H_0}$}]
\label{def:H0}
We define the functor $H_0\colon \FIMod\to \FBMod$ as the left adjoint of this inclusion $\FBMod\into \FIMod$. Explicitly, given an FI-module $V$, the FB-module $H_0(V)$ satisfies 
\begin{equation}
\label{eq:defH0}
H_0(V)_S\coloneq V_S/\spn(V_{<\abs{S}})_S;
\end{equation} given a bijection  $f\colon S\to S'$, the map $f_*\colon H_0(V)_S\to H_0(V)_{S'}$ is that induced by $f_*\colon V_S\to V_{S'}$.
\end{definition}
\begin{remark}
\label{remark:H0}
An FI-module $V$ is generated in degree $\leq m$ if and only if $H_0(V)_n$ vanishes for $n>m$. Similarly, $V$ is finitely generated if and only if the  $k$-module $\bigoplus_{n\geq 0} H_0(V)_n$ is finitely generated.

If $W_a$ is a $k[S_a]$-representation, the FI-module $M(W_a)$ is generated by $M(W_a)_a\simeq W_a$, so $H_0(M(W_a))_n$ vanishes for all $n\neq a$, and we also have $H_0(M(W_a))_a\simeq W_a$. More generally, for any FB-module $W$ we have a natural isomorphism $H_0(M(W))\simeq W$.
\end{remark}

\begin{remark}
The functor $H_0\colon \FIMod\to \FBMod$ is right-exact but not exact.
In the paper \cite{CE} we investigate the \emph{higher FI-homology}  $H_i(V)$ of an FI-module $V$, namely the left-derived functors of $H_0$. In particular, we show that the vanishing of higher FI-homology is connected with the existence of inductive presentations for $V$, and apply this to homological stability for FI-groups.
\end{remark}

\section{FI-modules: representation stability and character polynomials}

In this section we introduce various quantitative measures of how quickly an FI-module $V$ stabilizes.  We use these to give, in characteristic $0$, an equivalence between finite generation for the FI-module~$V$ and  representation stability (in the sense of Church--Farb)  for  the sequence of $S_n$-representations $V_n$. 

We give a number of strong constraints on $S_n$-representations arising in FI-modules, and prove that the characters $\chi_{V_n}$ are eventually given by a character polynomial. Finally, we prove stability for Schur functors, and we demonstrate how a classical theorem of Murnaghan follows easily from the general theory of FI-modules.

\subsection{Stability degree}
\label{section:stabilitydegree}
In this subsection we introduce the notion of a \emph{stability degree} for an FI-module over an arbitrary ring $k$. In characteristic 0, the stability degree provides a counterpart to the ``stable range'' for representation stability in \cite{CF}, as we prove later in Proposition~\ref{pr:stabdegstabrange}.

\para{Coinvariants of FI-modules} 
If $V_n$ is a representation of $S_n$, recall that the coinvariant quotient $(V_n)_{S_n}$ is the $k$-module $V_n\otimes_{k[S_n]}k$; this is the largest $S_n$-equivariant quotient of $V_n$ on which $S_n$ acts trivially. The functor taking an $S_n$-representation $V_n$ to the coinvariants $(V_n)_{S_n}$ is automatically right exact, and is also left exact when $\abs{S_n}=n!$ is invertible in $k$ (by averaging over $S_n$). 

Given an FI-module $V$, we can apply this for all $n\geq 0$ simultaneously, yielding $k$-modules $(V_n)_{S_n}$ for each $n\geq 0$. Moreover, all the various maps $f_*\colon V_n\to V_m$ induce just a single map $(V_n)_{S_n}\to (V_m)_{S_m}$ for each $n\leq m$. This data basically amounts to a graded $k[T]$-module, which we  define below as $\Phi_0(V)$.

\begin{definition}
A \emph{graded $k[T]$-module} $U$ consists of a collection of $k$-modules $U_i$ for each $i\in \N$, endowed with a map $T\colon U_i\to U_{i+1}$ for each $i\in \N$ (i.e.\ $T$ acts by a map of grading 1).
\end{definition}

For each $a\geq 0$, fix once and for all some set $\na$ with cardinality $a$. The specific set $\na$ is irrelevant; the authors suggest the choice $\na\coloneq \{-1,\ldots,-a\}$, to minimize mental collisions with the finite sets the reader is likely to have in mind. Fix also a functorial definition of the disjoint union $S\disjoint T$ of sets.

\begin{definition}[{\bf The graded $k[T]$-module $\Phi_a(V)$}]
\label{def:Phia}
Given $a\geq 0$ and an FI-module $V$, we define the graded $k[T]$-module $\Phi_a(V)$ as follows.
For $n\in \N$, let $\Phi_a(V)_n$ be the coinvariant quotient
\[\Phi_a (V)_n\coloneq (V_{\na\disjoint \n})_{S_n}.\]
To define $T\colon \Phi_a(V)_n\to \Phi_a(V)_{n+1}$, choose any injection $f\colon \n\into \npone$. This determines an injection $\id\disjoint f\colon \na\disjoint \n\into \na\disjoint \npone$, and thus a map $(\id\disjoint f)_*\colon V_{\na\disjoint \n}\to V_{\na\disjoint \npone}$. The map $T\colon \Phi_a(V)_n\to \Phi_a(V)_{n+1}$ is defined to be the induced map $(V_{\na\disjoint \n})_{S_n}\to (V_{\na\disjoint \npone})_{S_{n+1}}$. All such injections $\id\disjoint f$ are equivalent under post-composition by $S_{n+1}$, so the map $T$ is well-defined, independent of the choice of $f$.
\end{definition}
A morphism $V\to W$ of FI-modules induces a homomorphism $\Phi_a(V)\to \Phi_a(W)$ of graded $k[T]$-modules, and $\Phi_a$ is a functor from FI-modules to graded $k[T]$-modules. The functor $\Phi_a$ is always right exact. When $k$ contains $\Q$ the functor $\Phi_a$ is  exact, for the same reason that $W\mapsto (W)_{S_m}$ is exact.

\begin{definition}[{\bf Stability degree}]
\label{def:stabilitydegree}
The \emph{stability degree} $\stabD(V)$ of an FI-module $V$ is the smallest $s\geq 0$ such that for all $a\geq 0$, the map $T\colon \Phi_a(V)_n\to \Phi_a(V)_{n+1}$ is an isomorphism for all $n \geq s$. 
(If no such $s$ exists, we take $\stabD(V)=\infty$.)
\end{definition}
We can rewrite this definition to fit our usual indexing: if $f_n\colon \n\into \npone$ is the inclusion, for all $a\geq 0$ the maps $(f_n)_*\colon (V_n)_{S_{n-a}}\to (V_{n+1})_{S_{n+1-a}}$ are isomorphisms for all $n\geq \stabD(V)+a$.
When FI-modules are thought of as modules for the exponential twisted commutative algebra as in \cite{Aguiar}, the functor $\Phi_0$ is the \emph{bosonic Fock functor} that plays a crucial role in that theory.

\begin{remark}
The permutations of $\na$ act on the functor $\Phi_a$, so $\Phi_a(V)_n$ is an $S_a$-representation; explicitly, $\Phi_a(V)_n\simeq (\Res^{S_{a+n}}_{S_a\times S_n}V_{a+n})_{S_n}$ as an $S_a$-representation. The map $T\colon \Phi_a(V)_n\to \Phi_a(V)_{n+1}$ is $S_a$-equivariant, so it is equivalent to ask in Definition~\ref{def:stabilitydegree} that $T$ be an isomorphism of $k$-modules or of $S_a$-representations. We make use of this equivalence in Proposition~\ref{pr:stabdegstabrange} and Corollary~\ref{co:vakilwood}~below.
\end{remark}

It turns out to be useful to refine the notion of stability degree slightly.  This will be especially important in Section~\ref{section:configspaces} when we study the behavior of FI-modules in spectral sequences.

\begin{definition}
\label{def:injectivitydegree}
We say that the \emph{injectivity degree} $\injD(V)$ (resp.\ \emph{surjectivity degree} $\surjD(V)$) of an FI-module $V$ is the smallest $s\geq 0$ such that for all $a\geq 0$, the map $\Phi_a(V)_n\to \Phi_a(V)_{n+1}$ induced by multiplication by $T$ is injective (resp.\ surjective) for all $n\geq s$. 
By definition, $\stabD(V)=\max(\injD(V),\surjD(V))$.
\end{definition}

\begin{lemma}
Let $V$ be an FI-module. Any quotient $V\onto W$ satisfies $\surjD(W)\leq \surjD(V)$. When $k$ contains $\Q$, any submodule $W\subset V$ satisfies $\injD(W)\leq \injD(V)$.
\label{lemma:injsurjdescend}
\end{lemma}
\begin{proof}
Given $V\onto W$, the maps $\Phi_a(V)_n\onto \Phi_a(W)_n$ are surjective since $\Phi_a$ is right-exact. Thus if $T\colon \Phi_a(V)_n\to \Phi_a(V)_{n+1}$ is surjective, the same is true of $T\colon \Phi_a(W)_n\to \Phi_a(W)_{n+1}$. The proof for $W\subset V$ is identical, using that $\Phi_a$ is left-exact when $k$ contains $\Q$.
\end{proof}

The following computation will be needed in Section~\ref{section:configspaces}; it also provides an example where the injectivity degree and surjectivity degree differ quite drastically.

\begin{prop}
\label{pr:stabdegprojective}
For any FB-module $W$, the FI-module $M(W)$ has $\injD(M(W))=0$; if $W_i=0$ for $i>m$,  then $\surjD(M(W))\leq m$. The FI-module $M(m)$ has $\injD(M(m))=0$ and $\surjD(M(m))=\stabD(M(m))=m$.
\end{prop}

\begin{proof}
We will directly compute the graded $k[T]$-module $\Phi_a M(W)$.
For any FB-module $W$, by \eqref{eq:MWdecomp} we have \[M(W)_{\na\disjoint\n}=\bigoplus_{T\subset\, \na\disjoint \n} W_T.\]
Since $\Phi_a M(W)_n$ is by definition the $S_n$-coinvariants of $M(W)_{\na\disjoint\n}$, it splits as a direct sum over the orbits of $S_n$ acting on $\{T\subset \na\disjoint \n\}$. Such an orbit is determined by the intersection $T\cap \na$ (which can be any subset $U\subset \na$) together with the cardinality $\abs{T\cap \n}$ (which can be any integer $0\leq k\leq n$).

As a representative for the orbit corresponding to $(U,k)$ we can take $T=U\disjoint \kk$. The stabilizer in $S_n$ of this set consists of permutations of $\n$ preserving the subset $\kk$, which we can identify with $S_k\times S_{n-k}$.
This summand thus contributes $(W_{U\disjoint \kk})_{S_k\times S_{n-k}}$ to the coinvariants. Since the subgroup $S_{n-k}<S_k\times S_{n-k}$ acts trivially on $U\disjoint \kk$ and thus on $W_{U\disjoint \kk}$, we have $(W_{U\disjoint \kk})_{S_k\times S_{n-k}}=(W_{U\disjoint \kk})_{S_k}$. This last can be identified as a $k$-module with $\Phi_{\abs{U}}(W)_k$. (Recall from Definition~\ref{def:H0} that every FB-module can be regarded as an FI-module, so it is meaningful to talk about $\Phi_a(W)$.) In conclusion, we have an isomorphism of $k$-modules
\begin{equation}
\label{eq:PhiaMW}
\Phi_a M(W)_n\simeq \bigoplus_{\substack{U\subset \na\\0\leq k\leq n}}\Phi_{\abs{U}}(W)_k\end{equation}

The map $T\colon \Phi_a M(W)_n\to \Phi_a M(W)_{n+1}$ is induced by $\id\disjoint f\colon \na\disjoint \n\into \na\disjoint \npone$ for any $f\colon \n\into \npone$. Such an injection preserves the subset $T\cap \na$ and the cardinality $\abs{T\cap \n}=\abs{f(T)\cap \npone}$. In other words, $T$ preserves the decomposition \eqref{eq:PhiaMW}, and takes the factor $\Phi_{\abs{U}}(W)_k$ of $\Phi_a M(W)_n$ to the factor $\Phi_{\abs{U}}(W)_k$ of  $\Phi_a M(W)_{n+1}$ via the identity. This demonstrates that $\injD(M(W))$ has injectivity degree 0, since $T\colon \Phi_a M(W)_n\to \Phi_a M(W)_{n+1}$ is always injective.

If $W_i=0$ for $i>m$, certainly $\Phi_{\abs{U}}(W)_i=0$ for $i>m$ (indeed for $i>m-\abs{U}$). When $n\geq m$, the condition $0\leq k\leq n$ in \eqref{eq:PhiaMW} is therefore vacuous, so this description of $\Phi_aM(W)_n$ is independent of $n$ for $n\geq m$. Since on each factor the map $T$ is an isomorphism, we conclude that $T\colon \Phi_a M(W)_n\to \Phi_a M(W)_{n+1}$ is an isomorphism for $n\geq m$, so $M(W)$ has surjectivity degree $\leq m$.

Finally, for $M(m)=M(k[S_m])$, we can compute that $\Phi_0(k[S_m])_m\simeq (k[S_m])_{S_m}\simeq k\neq 0$. This summand contributes to $\Phi_a M(m)_n$ only when $n\geq m$, so $\surjD(M(m))=m$.
\end{proof}

\begin{prop}
\label{pr:surjdeg}
If $V$ is generated in degree $\leq d$ then $\surjD(V)\leq d$.
\end{prop}
\begin{proof}
If $V$ is generated in degree $\leq d$, Lemma~\ref{lemma:Mspan} shows that $V$ is a quotient of $\bigoplus_{m\leq d} M(V_m)$.  The latter has surjectivity degree $\leq d$ by Proposition~\ref{pr:stabdegprojective}, so $\surjD(V)\leq d$ by Lemma~\ref{lemma:injsurjdescend}.
\end{proof}
%TC: The following counterexample for the converse was added at the referee's request
The converse of Proposition~\ref{pr:surjdeg} is not true. For fixed $d\geq 0$,  any FI-module $V$ with $(V_{d+1})_{S_{d+1}}=0$ and $V_n=0$ for $n>d+1$ will have $\surjD(V)\leq d$. Indeed, these assumptions guarantee that $\Phi_a(V)_{n+1}=0$ for all $n\geq d$ and all $a\geq 0$, so surjectivity of $\Phi_a(V)_n\to \Phi_a(V)_{n+1}$ is automatic. But if $V_{d+1}\neq 0$, this FI-module is not generated in degree $\leq d$.

\subsection{Weight and stability degree in characteristic 0}
\label{ss:weight}
In this section we define and characterize certain structural properties of FI-modules over a field of characteristic~0 that will be used in \S\ref{section:repstab} below.

\begin{definition}[{\bf Weight of an FI-module}]
\label{de:weight}
Let $V$ be an FI-module over a field of characteristic~0. The \emph{weight} $\wt(V)$ of $V$ is the maximum of $\abs{\lambda}$ over all irreducible constituents $V(\lambda)_n$ occurring in the $S_n$-representations $V_n$. (If $V=0$ we set $\wt(V)=0$; if $\abs{\lambda}$ is unbounded then $\wt(V)=\infty$.) 
\end{definition}
If $W$ is a subquotient of $V$, then $\wt(W)\leq \wt(V)$. We also have the following proposition, which follows from a well-known property of Kronecker coefficients: if $V(\nu)_n$ occurs in the tensor product $V(\lambda)_n\otimes V(\mu)_n$, then $\abs{\nu}\leq \abs{\lambda}+\abs{\mu}$. 
\begin{prop}
\label{pr:tensorwt}
If $V$ and $W$ are FI-modules over a field of characteristic~0, the tensor product $V\otimes W$ satisfies $\wt(V\otimes W)\leq \wt(V)+\wt(W)$.
\end{prop}

In our analysis of the notion of weight, we will make frequent use of the classical branching rule for $S_n$-representations (see e.g.\ \cite{FH}). The same result holds if $\Q$ is replaced by any field of characteristic 0.
\begin{lemma}[{\bf Branching rule for $S_n$-representations}]
\label{lemma:branching}
Let  $\lambda\vdash n$ be a partition, and $V_\lambda$ the corresponding  irreducible $S_n$-representation over $\Q$.
\begin{enumerate}
\item[\rm{(i)}] As $S_{n+k}$-representations, we have the decomposition
\[\Ind_{S_n\times S_k}^{S_{n+k}} V_\lambda\boxtimes \Q\simeq \bigoplus_{\mu} V_\mu\]
over those partitions $\mu\vdash n+k$ obtained from $\lambda$ by adding one box to $k$ different columns.
\item[\rm{(ii)}] As $S_{n-k}$-representations, we have the decomposition \[(\Res^{S_n}_{S_{n-k}\times S_k}V_\lambda)_{S_k}\simeq\bigoplus_\nu V_\nu\] over those partitions $\nu\vdash n-k$  obtained from $\lambda$ by removing one box from $k$ different columns.\end{enumerate}
\end{lemma}

\begin{prop}
\label{pr:Mmuweight}
For any partition $\mu\vdash m$, the FI-module $M(\mu)$ over a field of characteristic~0 has $\wt(M(\mu))=m$. 
\end{prop}
\begin{proof}
The identification \eqref{eq:MWinduced} shows that $M(\mu)_n=M(V_\mu)_n$ is isomorphic  as an $S_n$-representation to $\Ind_{S_m\times S_{n-m}}^{S_n} V_\mu\boxtimes k$. Thus by Lemma~\ref{lemma:branching}(i),
those $\nu\vdash n$ for which $V_\nu$ occurs in $M(\mu)_n$ are those obtained from $\mu$ by adding one box to $n-m$ different columns. Since $\nu_1$ is the \emph{number} of columns of $\nu$, we must have $\nu_1\geq n-m$. When writing $\nu=\lambda[n]$ we have $\abs{\lambda}=n-\nu_1$, so we can rephrase this as saying that $\abs{\lambda}\leq m$ for any constituent $V(\lambda)_n$ of $M(\mu)_n$. This shows that $\wt(M(\mu))\leq m$. For any $n\geq m+\mu_1$, adding one box to the first $n-m$ columns yields the partition $\mu[n]$, so $V(\mu)_n$ itself occurs in $M(\mu)_n$; we conclude that $\wt(M(\mu))=m$.
\end{proof}
Proposition~\ref{pr:Mmuweight} shows that $\wt(M(V_m))\leq m$ for any $S_m$-representation $V_m$. 
Since any FI-module $V$ generated in degree $\leq d$ is a quotient of $\bigoplus_{m\leq d} M(V_m)$ by Lemma~\ref{lemma:Mspan}, we obtain the following proposition as a corollary.
\begin{prop}
\label{pr:fgweight}
Let $V$ be an FI-module over a field of characteristic~0. If $V$ is generated in degree $\leq d$, then $\wt(V)\leq d$.
\end{prop}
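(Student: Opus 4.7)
The plan is to reduce to the case of the free FI-modules $M(V_\mu)$ for partitions $\mu$ of size at most $d$, and then apply the branching rule (Pieri's rule) to control the irreducibles appearing.

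First, since $V$ is generated in degree $\leq d$, we have $H_0(V)_m = 0$ for all $m > d$, so $H_0(V) = \bigoplus_{m \leq d} H_0(V)_m$ as $S_m$-representations. Proposition~\ref{pr:MH0Vsurj} then gives a surjection
\[
\bigoplus_{m \leq d} M(H_0(V)_m) \twoheadrightarrow V.
\]
Since weight is preserved under subquotients (by the definition, as any irreducible constituent of a subquotient is an irreducible constituent of the original), it suffices to show that $M(W)$ has weight $\leq m$ for every $S_m$-representation $W$. Decomposing $W$ into irreducibles and using exactness of $M(-)$, we reduce further to showing that $M(V_\mu)$ has weight $\leq m$ for every partition $\mu \vdash m$.

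The key computation uses the branching rule that already appeared in the proofs of Propositions~\ref{pr:stabdegMlambda} and~\ref{pr:narrow}. Since $M(V_\mu)_n = \Ind_{S_m \times S_{n-m}}^{S_n} V_\mu \boxtimes k$, Pieri's rule tells us that the irreducible constituents $V_\nu$ appearing in $M(V_\mu)_n$ are exactly those $\nu \vdash n$ obtained from $\mu$ by adding a horizontal strip of $n - m$ boxes; in particular $\mu_{i-1} \geq \nu_i$ for all $i \geq 2$. Writing $V_\nu = V(\lambda)_n$ with $\lambda = (\nu_2, \nu_3, \ldots)$, we can then estimate
\[
|\lambda| = \nu_2 + \nu_3 + \cdots \;\leq\; \mu_1 + \mu_2 + \cdots \;=\; |\mu| \;=\; m \;\leq\; d,
\]
which is exactly the desired bound.

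No step here is genuinely hard: the branching rule argument is the same one already used twice in the excerpt, and the reduction via $M(H_0(V)) \twoheadrightarrow V$ is standard. The only mild subtlety is to be sure that the surjection from Proposition~\ref{pr:MH0Vsurj} really does reduce the problem to the free FI-modules on $H_0(V)_m$ for $m \leq d$, which follows immediately from the defining property of $H_0$ for modules generated in degree $\leq d$.
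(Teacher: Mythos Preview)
Your proof is correct and follows essentially the same approach as the paper: reduce to free FI-modules via a surjection, then apply Pieri's rule to bound $|\lambda|$ for the irreducible constituents. The only minor difference is that the paper uses the coarser surjection $\bigoplus_{m\leq d} M(m)^{\oplus V_m}\twoheadrightarrow V$ from Remark~\ref{rem:Mspan} and works directly with $M(d)_n\simeq \Ind_{S_{n-d}}^{S_n}k$, rather than passing through $M(H_0(V))$ and decomposing into $M(V_\mu)$'s; this avoids invoking Proposition~\ref{pr:MH0Vsurj} but leads to the same Pieri computation.
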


%The converse of Proposition~\ref{pr:fgweight} is false for FI-modules, but we will see in Proposition~\ref{pr:fisharpstdeg} that it does hold for $\FIsharp$-modules. 

\para{Stability degree in characteristic 0} The notion of stability degree over a field $k$ of characteristic 0 is richer than in the general case. For example, Proposition~\ref{pr:stabdegprojective} states that the FI-module $M(\lambda)=M(V_\lambda)$ has $\stabD(M(\lambda))\leq\abs{\lambda}$, but in fact this can be improved. Recall that $\lambda_1$ is the length of the first row of the partition $\lambda$.
\begin{prop}
\label{pr:stabdegMlambda}
For any partition $\lambda$ the FI-module $M(\lambda)$ over a field of characteristic~0 has 
$\stabD(M(\lambda))=\lambda_1$.
\end{prop}
To prove Proposition~\ref{pr:stabdegMlambda}, we will need the following elementary consequences of the branching rule.
If $V_n$ is an $S_n$-representation, we will write $(V_n)_{S_k}$ for the $S_{n-k}$-representation $(\Res^{S_n}_{S_{n-k}\times S_k}V_\lambda)_{S_k}$.
\begin{lemma}
\label{lem:coinvariants}
Let $\mu$ be a partition of $n$, and write $\mu=\lambda[n]$ as in Definition~\ref{def:Vlambdan}. Given $a\leq n$, set $k\coloneq n-a$, and consider the $S_a$-representation $(V_\mu)_{S_k}=(V(\lambda)_n)_{S_{n-a}}$ over $\Q$.
\begin{enumerate}
\item[\rm{(i)}] $(V_\mu)_{S_k}=0\iff k>\mu_1$; equivalently, $(V(\lambda)_n)_{S_{n-a}}=0 \iff a<\abs{\lambda}$.
\item[\rm{(ii)}] If $k=\mu_1$ (i.e.\ if $a=\abs{\lambda}$), we have $(V_\mu)_{S_k}=(V(\lambda)_n)_{S_{n-a}}\simeq V_\lambda$.
\item[\rm{(iii)}] For fixed $\lambda$ and $a\geq \abs{\lambda}$, the $S_a$-representation $(V(\lambda)_n)_{S_{n-a}}$ is independent of $n$ once $n\geq a+\lambda_1$; in fact 
\begin{equation}
\label{eq:coinvariants}
(V(\lambda)_n)_{S_{n-a}}\simeq \Ind_{S_{\abs{\lambda}}\times S_{a-\abs{\lambda}}}^{S_a} V_\lambda\boxtimes \Q\qquad\text{ for all }n\geq a+\lambda_1.
\end{equation}
\item[\rm{(iv)}] If $V$ is an FI-module with $\wt(V)\leq a$, any subquotient $W_n$ of the $S_n$-representation $V_n$ satisfies $W_n=0\iff (W_n)_{S_{n-a}}=0$.
\end{enumerate}
\end{lemma}
\begin{proof}
Lemma~\ref{lemma:branching}(ii) states that $(V_\mu)_{S_k}$ is the sum of $V_\nu$ over partitions $\nu\vdash a$ obtained from $\mu$ by removing boxes from $k$ different columns. Since $\mu$ has only $\mu_1$ columns, this is impossible when $k>\mu_1$. This demonstrates (i), from which (iv) follows immediately. When $k=\mu_1$, this can be done only by removing one box from each of the $\mu_1$ columns of $\mu$; by definition, to say that $\mu=\lambda[n]$ means that the resulting partition is $\lambda$, demonstrating (ii). 

To prove (iii), let $r=n-a$ and $c=a-\abs{\lambda}$; our assumption is that $r\geq \lambda_1$.
By Lemma~\ref{lemma:branching}(i), the right side of \eqref{eq:coinvariants} consists of those $V_\nu$ for which $\nu\vdash a$ is obtained from $\lambda$ by adding one box to $c$ different columns. Observe that these columns must lie within the first $\lambda_1+c$ columns, or the result would not be a partition. 

By definition, $\lambda[n]$ is obtained from $\lambda$ by adding one box to each of the first $r+c$ columns. By Lemma~\ref{lemma:branching}(ii) the left side of \eqref{eq:coinvariants} consists of those $V_\nu$ for which $\nu$ is obtained from $\lambda[n]$ by removing boxes from $r$ different columns; in other words, those $\nu$ obtained from $\lambda$ by adding one box to $c$ different columns \emph{within} the first $r+c$ columns. When $r\geq \lambda_1$, the observation above shows this last condition is vacuous, and the collection of $V_\nu$ occurring in the left and right side of \eqref{eq:coinvariants} coincide.
\end{proof}

\begin{proof}[Proof of Proposition~\ref{pr:stabdegMlambda}]
From our earlier computation of $\Phi_a M(V_\lambda)$ in \eqref{eq:PhiaMW} we have: 
\begin{equation}
\label{eq:PhiaMVlambda}
\Phi_a M(V_\lambda)_n\simeq\bigoplus_{\substack{k\leq n\\U\subset \na}}\Phi_{\abs{U}}(V_\lambda)_k=\bigoplus_{\substack{k\leq n\\\abs{U}+k=\abs{\lambda}}}(V_\lambda)_{S_k}
\end{equation}
We saw in the proof of Proposition~\ref{pr:stabdegprojective} that $T\colon \Phi_a M(V_\lambda)_n\to \Phi_a M(V_\lambda)_{n+1}$ is the identity on each factor of this decomposition, so our goal is to show that no new factors occur in $\Phi_a M(V_\lambda)_n$ for $n>\lambda_1$.

By Lemma~\ref{lem:coinvariants}(i), $(V_\lambda)_{S_k} = 0$ when $k>\lambda_1$. Thus we can add the condition $k\leq \lambda_1$ to \eqref{eq:PhiaMVlambda}; the condition $k\leq n$ is then vacuous for $n\geq \lambda_1$, showing that no new factors occur after $n=\lambda_1$. By Lemma~\ref{lem:coinvariants}(ii), $(V_\lambda)_{S_k}\neq 0$ when $k=\lambda_1$. Therefore the factors  with $k=\lambda_1$, which occur only for $n\geq \lambda_1$, are nonzero. We conclude that $T\colon \Phi_a M(V_\lambda)_n\to \Phi_a M(V_\lambda)_{n+1}$ is an isomorphism for $n\geq \lambda_1$ but is not surjective for $n=\lambda_1-1$, so $M(V_\lambda)$ has stability degree $\lambda_1$. 
\end{proof}

The stability degree bounds the width of the irreducible constituents of the $S_n$-representations $V_n$.
\begin{prop}
\label{pr:narrow}
Let $V$ be an FI-module over a field of characteristic~0. For all $n\geq 0$, every irreducible constituent $V(\lambda)_n$ of the $S_n$-representation $V_n$ satisfies $\lambda_1\leq \stabD(V)$.
\end{prop}
\begin{proof}
Let $s\coloneq \stabD(V)$. Consider the filtration $F^iV\coloneq \spn(V_{\leq i})$, which satisfies $V=\bigcup F^i V$. We will prove the inequality $\lambda_1\leq s$ for the irreducible constituents of each quotient $F^mV/F^{m-1}V$, and this implies the same inequality for $V$.

We first prove that for all $m\geq 0$, every irreducible constituent $V_\mu$ of $H_0(V)_m$ satisfies $\mu_1\leq s$. 
Fix $m\geq 0$, and let $\overline{V}\coloneq V/F^{m-1}V$. 
We have $\overline{V}_m=V_m/\spn(V_{<m})_m=H_0(V)_m$ as $S_m$-representations, and of course $\overline{V}_n=0$ for $n<m$. Being a quotient of $V$, we have $\surjD(\overline{V})\leq s$ by Lemma~\ref{lemma:injsurjdescend}, so the map $T\colon \Phi_{m-s-1}(\overline{V})_s\to  \Phi_{m-s-1}(\overline{V})_{s+1}$ is surjective. The domain $\Phi_{m-s-1}(\overline{V})_s$ is a quotient of $\overline{V}_{m-1}=0$ and thus vanishes, so $\Phi_{m-s-1}(\overline{V})_{s+1}=0$ as well. The latter is isomorphic to $(\overline{V}_m)_{S_{s+1}}$, so Lemma~\ref{lem:coinvariants}(i) states that $s+1>\mu_1$ for every irreducible constituent $V_\mu$ of $\overline{V}_m\simeq H_0(V)_m$.

We next show that every irreducible constituent $V(\lambda)_n$ of $M(V_\mu)_n$ satisfies $\lambda_1\leq \mu_1$. As we saw in the proof of Proposition~\ref{pr:Mmuweight}, $M(V_\mu)_n$ consists of those $V_\nu$ for which $\nu\vdash n$ is obtained from $\mu$ by adding boxes to $n-m$ different columns. The length of the \emph{second} row of $\nu$ is thus bounded by the length of the first row of $\mu$. When we write $V_\nu=V(\lambda)_n$ we have $\lambda_1=\nu_2$, which verifies the claim.

Consider the quotient $W\coloneq F^mV/F^{m-1}V$; to complete the proof we must show that every irreducible constituent $V(\lambda)_n$ of $W_n$ satisfies $\lambda_1\leq s$. By definition $F_mV$ is generated in degree $\leq m$, so the same is true of $W$. By Lemma~\ref{lemma:Mspan} there exists a surjection $\bigoplus_{n\leq m} M(W_n)\onto W$. As before we have $W_n=0$ for $n<m$ and  $W_m\simeq \overline{V}_m\simeq H_0(V)_m$, so this simplifies to $M(H_0(V)_m)\onto W$. As an $S_m$-representation $H_0(V)_m$ is a sum of irreducibles $V_\mu$, which according to the second paragraph all satisfy $\mu_1\leq s$, so the third paragraph implies that every irreducible constituent $V(\lambda)_n$ of $M(H_0(V)_m)_n$ satisfies $\lambda_1\leq s$. As an $S_n$-representation $W_n$ is a quotient of $M(H_0(V)_m)_n$, so the desired property holds for $W_n$ as well. This verifies the claim for $F^mV/F^{m-1}V$, and completes the proof.
\end{proof}
\pagebreak

Finally, we prove the Noetherian property for FI-modules over rings containing $\Q$. 

%\begin{theorem}[{\bf Noetherian property}]
%\label{th:noetherian}
%Let $k$ be a Noetherian ring containing $\Q$. % (for example, a field of characteristic $0$). 
%The category of FI-modules over $k$ is \emph{Noetherian}: any sub-FI-module of a finitely generated FI-module is finitely generated.
%\end{theorem}
\begin{proof}[Proof of Theorem~\ref{th:noetherian}]
Let $k$ be a Noetherian ring, let $V$ be a finitely generated FI-module over $k$ generated in degree $\leq a$, and let $W$ be a sub-FI-module of $V$; our goal is to prove that $W$ is finitely generated. Proposition~\ref{prop:charfg} implies that $V_n$ is a finitely generated $k$-module for each $n$. Since $k$ is Noetherian, its submodule $W_n$ is finitely generated as well.

The functor $\Phi_a$ of Definition~\ref{def:Phia} is exact over rings containing $\Q$, so $\Phi_a(W)\subset \Phi_a(V)$. The graded $k[T]$-module $\Phi_a(V)$ is finitely generated by Proposition~\ref{pr:surjdeg}. Since $k[T]$ is a Noetherian ring, its submodule $\Phi_a(W)$ is  finitely generated as a $k[T]$-module (equivalently, as a graded $k[T]$-module).

\para{The sub-FI-module $\widetilde{W}$} Choose a finite set of generators $x_1, \ldots, x_r$ of $\Phi_a(W)$ as a graded $k[T]$-module, with  $x_i\in \Phi_a(W)_{n_i}$. Since $\Phi_a(W)_{n}\simeq (W_{a+n})_{S_{n}}$ is a quotient of $W_{a+n}$, we can choose lifts  $w_i\in W_{a+n_i}$ projecting to $x_i$.
Let $\widetilde{W}$ be the finitely generated sub-FI-module of $W$ generated by the lifts $w_1,\ldots,w_r$, together with finite generating sets for $W_0$, \ldots, $W_{a-1}$.

 Since $\widetilde{W}\subseteq W$ we have $\Phi_a(\widetilde{W})\subseteq \Phi_a(W)$.  Since $\Phi_a(\widetilde{W})$ contains the generating set $\{x_i\}$ we have $\Phi_a(\widetilde{W})\supseteq \Phi_a(W)$. Thus $\Phi_a(\widetilde{W})=\Phi_a(W)$.  Since $\Phi_a$ is exact, this implies that $\Phi_a(W/\widetilde{W})=0$.

\para{The difference $W/\widetilde{W}$} For any $n$ we can decompose $(W/\widetilde{W})_n$ as a $k[S_n]$-module into isotypic components $(W/\widetilde{W})_n\simeq \bigoplus_{\lambda}N_\lambda\tensor_\Q V(\lambda)_n$, where $V(\lambda)_n$ is the irreducible $\Q[S_n]$-module and $N_\lambda$ is a $k$-module. We claim that only those $\lambda$ with $\abs{\lambda}\leq a$ appear in this decomposition. This is essentially nothing more than the claim $\wt(W/\widetilde{W})\leq a$, except that weight is only defined for FI-modules over a \emph{field} of characteristic 0.

However, by restricting to $\Q\subset k$, we can consider any FI-module $U$ over $k$ as an FI-module $U^\Q$ over $\Q$.
To say that $V$ is generated in degree $\leq a$ means the natural map $\bigoplus_{m\leq a}M(V_m)\to V$ is surjective. It certainly remains surjective when considered as a map of $\Q$-vector spaces, so since $M(V_m)^\Q=M(V^\Q_m)$ this provides a surjection $\bigoplus_{m\leq a}M(V^\Q_m)\onto V^\Q$. Therefore $V^\Q$ is generated in degree $\leq a$ as well, $\wt(V^\Q)\leq a$ by Proposition~\ref{pr:fgweight} states that $\wt(V^\Q)\leq a$. Since $(W/\widetilde{W})^\Q$ is a subquotient of $V^\Q$ we have $\wt((W/\widetilde{W})^\Q)\leq a$, as claimed.

For any $n\geq a$, we have
\[\Phi_a(W/\widetilde{W})_{n-a}=(W/\widetilde{W})_n\otimes_{k[S_{n-a}]} k\simeq \bigoplus N_\lambda\tensor_\Q \big(V(\lambda)_n\tensor_{\Q[S_{n-a}]} \Q\big).\] 
Lemma~\ref{lem:coinvariants}(i) states that $V(\lambda)_n\tensor_{\Q[S_{n-a}]} \Q=(V(\lambda)_n)_{S_{n-a}}$ is nonzero when $\abs{\lambda}\leq a$. But we proved above that $\Phi_a(W/\widetilde{W})=0$. This is only possible if $N_\lambda=0$ for all $\lambda$, or in other words if $(W/\widetilde{W})_n=0$. 

This shows that $(W/\widetilde{W})_n=0$ for all $n\geq a$, and  $\widetilde{W}_n=W_n$ by definition for  $n<a$, so $W=\widetilde{W}$. This demonstrates that $W$ itself is finitely generated, and thus completes the proof of the theorem.
\end{proof}

\subsection{Representation stability and character polynomials}
\label{section:repstab}

Our main goal in this subsection is to prove Proposition~\ref{pr:stabdegstabrange}, relating the weight and stability degree of FI-modules with uniform representation stability in the sense of Church--Farb~\cite{CF}. We then use this to prove Theorem~\ref{th:FIequiv}, which states the equivalence of finite generation and representation stability. % and monotonicity in the sense of \cite{Ch}. 

We first quickly recall the definition of representation stability; see \cite{CF} for a detailed treatment. 
As mentioned in the introduction, an FI-module $V$ provides a sequence $\{V_n\}$ of $S_n$-representations for all $n\in \N$. Moreover, if $f_{n,n+k}\colon \n\into \mathbf{n+k}$ denotes the standard inclusion, the injections $f_{n,n+1}\colon \n\into \npone$ induce $S_n$-equivariant maps $\phi_n\colon V_n\to V_{n+1}$. Such a sequence  $\{V_n,\phi_n\}$ of $S_n$-representations and $S_n$-equivariant maps was called a \emph{consistent sequence} in \cite{CF}.

\begin{remark}
Not every consistent sequence can arise from an FI-module.
For any $\sigma\in S_{n+k}$ with $\sigma|_\n=\id$, we have the identity $f_{n,n+k}=\sigma \circ f_{n,n+k}$ in $\Hom_{\FI}(\n,\mathbf{n+k})$. Therefore on any consistent sequence arising from an FI-module,
\begin{equation}
\label{eq:consistentseqcondition}
\text{all }\sigma\in S_{n+k}\text{ with }\sigma|_\n=\id\text{ must act trivially on }\im(V_n\to\cdots\to V_{n+k})\subset V_{n+k}.
\end{equation}
For example, the consistent sequence of regular representations $k\to k \to k[S_2] \to k[S_3]\to \cdots$ induced by the standard inclusions $S_m\into S_{m+1}$ does not satisfy \eqref{eq:consistentseqcondition}, and thus cannot arise from an FI-module. Conversely, it is not difficult to check that this condition is also sufficient:  any consistent sequence satisfying \eqref{eq:consistentseqcondition} can be ``promoted'' to an FI-module. (In fact, it suffices that \eqref{eq:consistentseqcondition} holds when $k=2$.)
\end{remark}

\begin{definition}[{\bf Representation stability \cite{CF}}] 
\label{definition:repstab}
Let $\{V_n,\phi_n\}$ be a consistent sequence of $S_n$--representations over a field of characteristic~0.
The sequence $\{V_n,\phi_n\}$ is \emph{uniformly representation stable with stable range $n\geq N$} if each of the following conditions holds.
\begin{enumerate}[{\bf I.}]
\item \textbf{Injectivity:} The map $\phi_n\colon V_n\to
  V_{n+1}$ is injective for all $n\geq N$.
\item \textbf{Surjectivity:} The span of the $S_{n+1}$--orbit of
  $\phi_n(V_n)$ equals all of $V_{n+1}$ for all $n\geq N$.
\item \textbf{Multiplicities:} Decompose $V_n$ into irreducible
  representations as
  \[V_n=\bigoplus_\lambda c_{\lambda,n}V(\lambda)_n\] with
  multiplicities $0\leq c_{\lambda,n}\leq \infty$. For each $\lambda$,
  the multiplicities $c_{\lambda,n}$ are  independent of
  $n$ for $n\geq N$.
\end{enumerate}
\end{definition}
We say that $V$ is uniformly representation stable if Definition~\ref{definition:repstab} holds for some $N$.
The following proposition shows that a stability degree for an FI-module $V$ guarantees that it is uniformly representation stable with a stable range that can be specified quite precisely.

\begin{prop}[{\bf Stability degree and\ representation stability}]
\label{pr:stabdegstabrange}
Let $V$ be an FI-module over a field of characteristic~0. The sequence of $S_n$-representations $\{V_n,\phi_n\}$ is uniformly representation stable with stable range ${n\geq \wt(V)+\stabD(V)}$.
\end{prop}
\begin{proof}
Set $s\coloneq \stabD(V)$ and $d\coloneq \wt(V)$.
Let $K_n$ be the kernel of the $S_n$-equivariant map $\phi_n\colon V_n\to V_{n+1}$, and let
 $C_{n+1}$ be the cokernel of the $S_{n+1}$-equivariant map $\phi'_n\coloneq \Ind_{S_n}^{S_{n+1}}\phi_n \colon \Ind_{S_n}^{S_{n+1}} V_n\to V_{n+1}$ induced by $\phi_n$.
 
To prove Conditions I and II of Definition~\ref{definition:repstab}, we must show that $K_n = 0$ and $C_{n+1}=0$ for all $n \geq s+d$.
Since $K_n$ is a subrepresentation of $V_n$ and $C_{n+1}$ is a quotient of $V_{n+1}$, Lemma~\ref{lem:coinvariants}(iv) implies for any $n\geq d$ that $K_n=0\iff (K_n)_{S_{n-d}}=0$ and $C_{n+1}=0\iff (C_{n+1})_{S_{n+1-d}}=0$. Since taking $S_a$-coinvariants in characteristic 0 is exact, $(K_n)_{S_{n-d}}$ is the kernel of $(\phi_n)_{S_{n-d}}\colon (V_n)_{S_{n-d}}\to (V_{n+1})_{S_{n-d}}$, and $(C_{n+1})_{S_{n+1-d}}$ is the cokernel of $(\phi'_n)_{S_{n+1-d}}\colon (\Ind^{S_{n+1}}_{S_n} V_n)_{S_{n+1-d}} \ra (V_{n+1})_{S_{n+1-d}}$.

The map $T\colon \Phi_d(V)_{n-d}\to \Phi_d(V)_{n+1-d}$ of Definition~\ref{def:stabilitydegree}  can be factored in two ways:
\[T\colon (V_n)_{S_{n-d}} \xrightarrow{(\phi_n)_{S_{n-d}}} (V_{n+1})_{S_{n-d}} \ra (V_{n+1})_{S_{n+1-d}}.\]
\[T\colon (V_n)_{S_{n-d}} \ra (\Ind^{S_{n+1}}_{S_n} V_n)_{S_{n+1-d}} \xrightarrow{(\phi'_n)_{S_{n+1-d}}} (V_{n+1})_{S_{n+1-d}}\]
By definition of stability degree, this composition $T$ is an isomorphism when $n-d \geq s$. It follows that $(\phi_n)_{S_{n-d}}$ is injective and $(\phi'_n)_{S_{n+1-d}}$ is surjective when $n-d\geq s$.  We conclude for $n\geq s+d$ that $(K_n)_{S_{n-d}} = 0$ and $(C_{n+1})_{S_{n+1-d}}=0$; therefore $K_n = 0$ and $C_{n+1}=0$, as desired.

It remains to prove that the multiplicity $c_{\lambda,n}$ of $V(\lambda)_n$ in $V_n$ is constant when $n \geq s + d$. We prove this by induction on $\abs{\lambda}$; no base case will be necessary. If $\abs{\lambda}>d$, by definition of weight $c_{\lambda,n}=0$ for all $n$, so it suffices to prove this for $\abs{\lambda}\leq d$. For fixed $m\leq d$, assume by induction that for all $\mu$ with $\abs{\mu}<m$ the multiplicity $c_{\mu,n}$ of $V(\mu)_n$ in $V_n$ is constant for $n\geq s+d$.

By definition of stability degree, that the isomorphism class of the $S_m$-representation $(V_n)_{S_{n-m}}$ is independent of $n$ for  $n\geq s+m$.
By Proposition~\ref{pr:narrow}, each $V(\lambda)_n$ which occurs in $V_n$ satisfies $\lambda_1\leq s$. The results of Lemma~\ref{lem:coinvariants} apply once $n\geq \lambda_1+m$, so they hold in our range $n\geq s+d$. 

By Lemma~\ref{lem:coinvariants}(i), only those $V(\lambda)_n$ with $\abs{\lambda}\leq m$ contribute to $(V_n)_{S_{n-m}}$, so we can write
\begin{equation}
\label{eq:repstabcoinvariants}
(V_n)_{S_{n-m}}=\bigoplus_{\abs{\mu}<m}c_{\mu,n}(V(\mu)_n)_{S_{n-m}}\oplus \bigoplus_{\abs{\lambda}=m}c_{\lambda,n}(V(\lambda)_n)_{S_{n-m}}
\end{equation}
For each $\mu$ appearing in the first summand, Lemma~\ref{lem:coinvariants}(iii) states that the $S_m$-representation $(V(\mu)_n)_{S_{n-m}}$ is independent of $n$ in our range. By induction the multiplicities $c_{\mu,n}$ are also constant for $n\geq s+d$, so the isomorphism class of the first summand of \eqref{eq:repstabcoinvariants} is constant for $n\geq s+d$. Since the same is true of $(V_n)_{S_{n-m}}$ itself, the isomorphism class of the second summand of \eqref{eq:repstabcoinvariants} must also be constant for $n\geq s+d$. By Lemma~\ref{lem:coinvariants}(ii), the second summand of \eqref{eq:repstabcoinvariants} is simply $\bigoplus_{\abs{\lambda}=m}c_{\lambda,n}V_\lambda$. Therefore for each $\lambda$ with $\abs{\lambda}=m$, the multiplicity $c_{\lambda,n}$ is constant for $n\geq s+d$, as desired.
\end{proof}

We can now complete the proof of Theorem~\ref{th:FIequiv}, which states that representation stability is equivalent to finite generation for FI-modules over a field of characteristic~0. We also observe along the way that finitely generated FI-modules are monotone in the sense of \cite[Definition~1.2]{Ch}.
\begin{proof}[Proof of Theorem~\ref{th:FIequiv}]
Assume that $V$ is finitely generated, say in degree $\leq g$. We have $\surjD(V)\leq g$ by Proposition~\ref{pr:surjdeg}, and we would like to bound $\injD(V)$ as well. By Lemma~\ref{lemma:Mspan} there is a free FI-module $M\coloneq \bigoplus_{i=0}^g M(i)^{\oplus b_i}$ with a surjection $M\onto V$; let $K$ be the kernel of this map. By Theorem~\ref{th:noetherian}, the submodule $K$ of $M$ is finitely generated, say in degree $\leq r$.

Since $\Phi_a$ is exact over rings containing $\Q$, we have $\Phi_a(V)_n\simeq \Phi_a(M)_n / \Phi_a(K)_n$. Proposition~\ref{pr:stabdegprojective} states that $\injD(M)=0$, so the maps $T\colon \Phi_a(M)_n\to \Phi_a(M)_{n+1}$ are injective for all $n\geq 0$.  Proposition~\ref{pr:surjdeg} implies $\surjD(K)\leq r$, so for $n\geq r$ the maps $T\colon \Phi_a(K)_n\to \Phi_a(K)_{n+1}$ are surjective. We conclude that for $n\geq r$ the maps $T\colon \Phi_a(V)_n\to \Phi_a(V)_{n+1}$ are injective, or in other words that $\injD(V)\leq r$. Therefore $\stabD(V)\leq N\coloneq \max(g,r)$, so Proposition~\ref{pr:stabdegstabrange} implies that $\{V_n,\phi_n\}$ is uniformly representation stable in degrees $\geq N+g$.

Another application of Proposition~\ref{pr:stabdegstabrange} shows that $\set{K_n}$ is uniformly representation stable in some appropriate range, and Church~\cite[Theorem 2.8]{Ch} states that $\set{\bigoplus_{i=0}^g M(i)^{\oplus b_i}_n}$ is monotone.  It then follows from \cite[Proposition 2.3]{Ch} that $\set{V_n}$ is monotone in some stable range.

It remains to show that uniform representation stability implies finite generation.  Assume that $\{V_n,\phi_n\}$ is uniformly representation stable for $n\geq N$. Definition~\ref{definition:repstab}.II  states that for $n\geq N$ the $S_{n+1}$-span of $\im(\phi_n\colon V_n\to V_{n+1})$ is all of $V_{n+1}$. Equivalently, $V_{n+1}$ is spanned by the images of all FI-maps $f_*\colon V_n\to V_{n+1}$ for $f\colon \n\into \npone$. By induction, this implies that $V=\spn(V_{\leq N})$. Since each $V_n$ is finite-dimensional by assumption, this shows that $V$ is finitely generated.\end{proof}

\para{Character polynomials} We finish this section by proving a refined version of Theorem~\ref{thm:intro:persinomial}. Recall that for each $i\geq 1$ and any $n\geq 0$, the class function $X_i\colon S_n\to \N$ is defined by
\[X_i(\sigma)\coloneq \text{\ number of $i$-cycles in $\sigma$.}\]
For example, $X_1(\sigma)$ is the number of fixed points of the permutation $\sigma$. Polynomials in the variables $X_i$ are called \emph{character polynomials}. Class functions form a ring under pointwise product, so any character polynomial $P\in \Q[X_1,X_2,\ldots]$ also defines a class function $P\colon S_n\to \Q$ for all $n\geq 0$. 
The \emph{degree} of a character polynomial is defined by setting $\deg(X_i)=i$.

\begin{thm}[{\bf Polynomiality of characters}]
\label{thm:persinomial}
Let $V$ be a finitely generated FI-module over a field of characteristic~0. There is a unique polynomial $P_V \in \Q[X_1, X_2, \ldots]$ with $\deg P_V\leq \wt(V)$ such that
for all $n\geq \stabD(V)+\wt(V)$ and all $\sigma\in S_n$, 
\[\chi_{V_n}(\sigma)=P_V(\sigma).\]
\end{thm}
\begin{proof}
Classically, the interest in character polynomials is driven by the following fact: for each partition $\lambda$, there exists a polynomial $P_\lambda\in\Q[X_1, X_2, \ldots]$ of degree $\abs{\lambda}$ such that $\chi_{V(\lambda)_n}(\sigma) = P_\lambda(\sigma)$ for all $n \geq \abs{\lambda} + \lambda_1$ and all $\sigma\in S_n$ \cite[Example I.7.14]{Mac}.
This seems to have been known, at least implicitly, as far back as Murnaghan; Macdonald traces it back to work of Frobenius in 1904. For a more recent reference, see Garsia--Goupil~\cite{GG}.
%so that for $n\geq \abs{\lambda}+\lambda_1$ we have the irreducible $S_n$-representation $V(\lambda)_n$. 

%For example, when $\lambda=\y{1}$ we have $P_\lambda = X_1 - 1$, which agrees with the character of $V(\y{1})_n$ for all $n \geq 2$. Since all irreducible characters of $S_n$ are integer-valued, the polynomials $P_\lambda$ are integer-valued as well.

Proposition~\ref{pr:stabdegstabrange} implies that for every finitely generated FI-module $V$, there exist constants $c_\lambda$ so that $
V_n \simeq \bigoplus_\lambda c_\lambda V(\lambda)_n$ for all $n \geq  \stabD(V)+\wt(V)$.
If $\lambda$ is a partition with $c_\lambda \neq 0$, the definition of weight implies that $\abs{\lambda} \leq  \wt(V)$ and Proposition~\ref{pr:narrow} states that that $\lambda_1 \leq \stabD(V)$.
The polynomial $P_V\coloneq \sum_\lambda c_\lambda P_\lambda$ thus satisfies the desired claim.
\end{proof}

%In particular, setting $F_V(n)=P_V(n,0,\ldots,0)$ we have $\dim V_n = \chi_{V_n}(\id)=F_V(n)$ for all $n\geq  \stabD(V)+\wt(V)$. 

\subsection{Murnaghan's theorem and stability of Schur functors}
\label{section:SchurMurnaghan}

\para{The FI-module $V(\lambda)$} Given a partition $\lambda$, in Definition~\ref{def:Vlambdan} we defined the padded partition $\lambda[n]$ and the irreducible $S_n$-representation $V(\lambda)_n\coloneq V_{\lambda[n]}$. The following proposition says that all these $S_n$-representations arise from a single finitely-generated FI-module $V(\lambda)$ with $V(\lambda)_{\n}\simeq V(\lambda)_n$. %The property \eqref{eq:Vlambdan} guarantees that there is no conflict between our notation $V(\lambda)_n= V(\lambda)_\n$ and our notational convention $V(\lambda)_n\coloneq V_{\lambda[n]}$.
\begin{prop}[{\bf The FI-module $V(\lambda)$}]
\label{pr:Vlambda}
Let $k$ be a field of characteristic~0. For any partition $\lambda$, there is an FI-module $V(\lambda)$ satisfying
\begin{equation}
\label{eq:Vlambdan}
V(\lambda)_\n\simeq \begin{cases}V_{\lambda[n]}&\text{if }n\geq \abs{\lambda}+\lambda_1\\0&\text{otherwise}\end{cases}
\end{equation}
that is finitely generated in degree $\abs{\lambda}+\lambda_1$. It has $\stabD(V(\lambda))=\lambda_1$ and $\wt(V(\lambda))=\abs{\lambda}$.
\end{prop}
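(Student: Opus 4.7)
The plan is to realize $V(\lambda)$ as a sub-FI-module of $M(\lambda)$, and then read off all three assertions using Pieri's rule together with the $\FIsharp$-module structure on $M(\lambda)$. Specifically, I would define $V(\lambda)\coloneq\tau_{\geq|\lambda|}M(\lambda)\subset M(\lambda)$ using the construction of Proposition~\ref{pr:taud}.

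First I would pin down $V(\lambda)_n$ concretely. By Pieri's rule, $M(\lambda)_n=\Ind_{S_{|\lambda|}\times S_{n-|\lambda|}}^{S_n}V_\lambda\boxtimes k$ decomposes as $\bigoplus_\mu V_\mu$ summed over partitions $\mu\vdash n$ with $\mu\supset\lambda$ and $\mu/\lambda$ a horizontal strip. Writing $V_\mu=V(\nu)_n$ with $\nu_i=\mu_{i+1}$, the weight of this constituent is $|\nu|=n-\mu_1$, so the condition weight $\geq|\lambda|$ becomes $\mu_1\leq n-|\lambda|$. Together with the containment $\mu_1\geq\lambda_1$ this requires $n\geq|\lambda|+\lambda_1$. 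In that range the extremal case $\mu_1=n-|\lambda|$ forces $\sum_{i\geq 2}\mu_i=|\lambda|$; combining this with the horizontal-strip inequalities $\mu_{i+1}\leq\lambda_i$ shows $\mu_{i+1}=\lambda_i$ for all $i\geq 1$, so $\mu=\lambda[n]$ uniquely and with multiplicity one. This gives \eqref{eq:Vlambdan}.

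For finite generation in degree $\leq|\lambda|+\lambda_1$, let $V'$ be the sub-FI-module of $V(\lambda)$ generated by the (finite-dimensional) space $V(\lambda)_{|\lambda|+\lambda_1}=V_{\lambda[|\lambda|+\lambda_1]}$. Since $M(\lambda)$ is an $\FIsharp$-module by Example~\ref{ex:MWFIsharp}, Proposition~\ref{pr:FIsharpinjective} makes every map $M(\lambda)_m\to M(\lambda)_n$ injective for $m\leq n$; the same then holds after restriction to $V(\lambda)$. Consequently, for every $n\geq|\lambda|+\lambda_1$ the map $V(\lambda)_{|\lambda|+\lambda_1}\to V(\lambda)_n$ induced by the standard inclusion is nonzero, and since $V(\lambda)_n$ is $S_n$-irreducible the nonzero subrepresentation $V'_n$ must equal all of $V(\lambda)_n$. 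Hence $V'=V(\lambda)$.

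For the stability-degree bound, Proposition~\ref{pr:stabdegprojective} and Remark~\ref{rem:injsurjdescend} together give $V(\lambda)$ injectivity degree $0$, so it remains to check surjectivity of $T\colon\Phi_a V(\lambda)_n\to\Phi_a V(\lambda)_{n+1}$ for $n\geq\lambda_1$, which I would handle by cases on $a$. If $a<|\lambda|$, then every irreducible constituent of $V(\lambda)$ has weight $|\lambda|>a$, so Lemma~\ref{lem:coinvariants}(i) makes $\Phi_a V(\lambda)$ vanish identically. If $a\geq|\lambda|$ and $n\geq\lambda_1$, then $n+a\geq|\lambda|+\lambda_1$ so $V(\lambda)_{n+a}=V_{\lambda[n+a]}$, and Lemma~\ref{lem:coinvariants}(iii) says the $S_a$-representation $(V_{\lambda[n+a]})_{S_n}$ is independent of $n$ in this range; combined with injectivity degree $0$, this completes the argument. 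The main subtlety is the finite-generation step, where one needs the $\FIsharp$-module injectivity to see that the abstractly-defined sub-FI-module $\tau_{\geq|\lambda|}M(\lambda)$ is actually generated in its minimum nonzero degree rather than requiring generators in every higher degree as well.
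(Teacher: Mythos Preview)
Your proof is correct. You define $V(\lambda)$ exactly as the paper does, as $\tau_{\geq|\lambda|}M(\lambda)$, and your computation of $V(\lambda)_n$ via Pieri is the same as the paper's branching-rule argument. Where you diverge is in the remaining two claims, and in both places your route is more self-contained than the paper's.

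For finite generation in degree $|\lambda|+\lambda_1$, the paper simply invokes the ``monotonicity'' of $M(\lambda)$ from \cite[Theorem~2.8]{Ch} as a black box. Your argument instead uses the $\FIsharp$-structure on $M(\lambda)$ (Example~\ref{ex:MWFIsharp} and Proposition~\ref{pr:FIsharpinjective}) to see that the standard-inclusion map $V(\lambda)_{|\lambda|+\lambda_1}\to V(\lambda)_n$ is injective, hence nonzero, and then irreducibility of $V(\lambda)_n$ finishes it. This is a clean, internal argument that avoids the external citation. For the surjectivity-degree bound, the paper observes that $V(\lambda)$ is a quotient of $M(\mu)$ with $\mu=\lambda[|\lambda|+\lambda_1]$ and then appeals to Proposition~\ref{pr:stabdegMlambda} to get surjectivity degree $\leq\mu_1=\lambda_1$. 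You instead argue directly from Lemma~\ref{lem:coinvariants}: for $a<|\lambda|$ the coinvariants vanish, and for $a\geq|\lambda|$ part~(iii) gives constant dimension once $n\geq\lambda_1$, which together with injectivity degree~$0$ forces $T$ to be an isomorphism. The paper's route is slicker once Proposition~\ref{pr:stabdegMlambda} is in hand, but yours bypasses that proposition entirely and makes the dependence on $\lambda_1$ transparent.
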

\begin{proof}
We will define $V(\lambda)$ as a sub-FI-module of $M(\lambda)$. Let $d\coloneq \abs{\lambda}+\lambda_1$. Since every finite set is isomorphic to $\n$ for some $n\geq 0$, it suffices to define the subspace $V(\lambda)_\n\subset M(\lambda)_\n$ for $n\geq d$.

Lemma~\ref{lemma:branching}(i) states that $V_\mu$ occurs in $M(\lambda)_\n$ if $\mu\vdash n$ is obtained from $\lambda$ by adding one box to $n-\abs{\lambda}$ different columns. When $n-\abs{\lambda}\geq \lambda_1$, adding boxes to the \emph{first} $n-\abs{\lambda}$ columns yields the partition $\mu=\lambda[n]$. (For future reference, we note that $\lambda[n]$ has $\abs{\lambda}$ boxes below the first row, while all other partitions $\mu\neq \lambda[n]$ that occur have $<\abs{\lambda}$ boxes below the first row.)
The condition \eqref{eq:Vlambdan} that $V(\lambda)_\n\simeq V_{\lambda[n]}$ for $n\geq d$ thus uniquely defines the subspace $V(\lambda)_\n\subset M(\lambda)_\n$. 

We must now verify that these subspaces define a sub-FI-module $V(\lambda)$ of $M(\lambda)$. That is, we must show that for every  $f\in \Hom_{\FI}(\m,\n)$ the map $f_*\colon M(\lambda)_m\to M(\lambda)_n$ satisfies
$f_*(V(\lambda)_m)\subset V(\lambda)_n$. As we will see, this follows just from the fact that $f_*$ is $S_m$-equivariant. 

According to Pieri's rule \cite[Exercise 4.44]{FH}, the $S_m$-irreducible representations $V_{\nu}$ which occur in the restriction $\Res_{S_m}^{S_n}V_{\mu}$ are exactly those for which $\nu\vdash m$ can be obtained from $\mu\vdash n$ by removing $n-m$ boxes. We noted above that every $V_\mu$ with $\mu\neq \lambda[n]$ occurring in $M(\lambda)_n$ has $< \abs{\lambda}$ boxes below the first row. Any partition $\nu$ obtained by removing boxes from such a $\mu\neq \lambda[n]$ must also have $<\abs{\lambda}$ boxes below the first row. In particular, $V(\lambda)_m$ cannot occur in $\Res^{S_n}_{S_m}V_\mu$ for any such $\mu\neq \lambda[n]$.

We conclude that 
any $S_m$-equivariant map from $V(\lambda)_m$ to $M(\lambda)_n$ has image contained in $V(\lambda)_n$. In particular this applies to $f_*$, which shows that $V(\lambda)$ is a sub-FI-module of $M(\lambda)$.

We now show that $\spn(V_d)=V$. For any $n\geq d$, choose any injection $f\colon \dd\into \n$. The map $f_*\colon V(\lambda)_d\to V(\lambda)_n$ is injective, since $f_*\colon M(\lambda)_d\to M(\lambda)_n$ is. Its image $f_*(V(\lambda)_d)\subset V(\lambda)_n$ is thus nonzero, showing that $\spn(V_d)_n\neq 0$. Since $V(\lambda)_n$ is irreducible, it follows that $\spn(V_d)_n=V_n$, as desired. This argument (a special case of the ``monotonicity'' proved for $M(\lambda)$ in Church~\cite[Theorem 2.8]{Ch}) shows that $V$ is generated in degree $d=\abs{\lambda}+\lambda_1$.

That $\wt(V(\lambda))=\abs{\lambda}$ is immediate from \eqref{eq:Vlambdan}.
Since $V(\lambda)$ is a sub-FI-module of $M(\lambda)$, Lemma~\ref{lemma:injsurjdescend} implies that $V(\lambda)$ has injectivity degree 0. 
Let $\mu=(\lambda_1,\lambda_1,\ldots,\lambda_\ell)=\lambda[d]$.
Since $V(\lambda)$ is generated by $V(\lambda)_{d}\simeq V_{\mu}$, the FI-module $V(\lambda)$ is a quotient of $M(\mu)$. By Proposition~\ref{pr:stabdegMlambda} we know $\surjD(M(\mu))=\mu_1=\lambda_1$, so by Lemma~\ref{lemma:injsurjdescend} we have $\surjD(V(\lambda))\leq \lambda_1$. Lemma~\ref{lem:coinvariants}(ii) implies that $\Phi_{\abs{\lambda}}V(\lambda)_{\lambda_1}\simeq V_{\lambda}\neq 0$, while $\Phi_{\abs{\lambda}}V(\lambda)_{\lambda_1-1}=0$, so this bound is sharp.
\end{proof}

\para{Murnaghan's theorem}\label{section:murnaghan}
In 1938 Murnaghan stated the following theorem;  the first complete proof of this theorem was given in 1957 by Littlewood~\cite{Li}.
\begin{theorem}[{\bf Murnaghan's Theorem}]
\label{thm:Murnaghan}For each pair of partitions $\lambda$ and $\mu$ there exists a finite set $S$ of partitions $\nu$ and a set of nonnegative integers $g_{\lambda,\mu}^\nu$ such that 
for all sufficiently large $n$:
\begin{equation}
\label{eq:Murnaghan}
V(\lambda)_n \tensor V(\mu)_n = \bigoplus_{\nu \in S} g_{\lambda,\mu}^\nu V(\nu)_n.
\end{equation}
\end{theorem}

As we now demonstrate, Murnaghan's Theorem follows rather easily from the theory that we have already built up as a structural statement about a single object, the FI-module $V(\lambda)\otimes V(\mu)$.

\begin{proof}
 Since $V(\lambda)$ and $V(\mu)$ are finitely generated, Proposition~\ref{pr:tensor} implies that $V(\lambda)\otimes V(\mu)$ is finitely generated.  Theorem~\ref{th:FIequiv} then implies that the sequence of $S_n$-representations
\[
(V(\lambda) \tensor V(\mu))_n = V(\lambda)_n \tensor V(\mu)_n
\]
is uniformly representation stable, which implies \eqref{eq:Murnaghan}.
\end{proof}

\para{Stability for Schur functors}
Let $k$ be a field of characteristic~0. Given a partition $\lambda$, the \emph{Schur functor} $\Schur_\lambda$ is a functor from $k$-vector spaces to $k$-vector spaces whose properties are the subject of substantial interest in combinatorics and representation theory.  See \cite[Lecture 6]{FH} for the basic properties of Schur functors.  
Since $\Schur_\lambda$ is a covariant functor, for any FI-module $V$ we can consider the FI-module $\Schur_\lambda V$, which satisfies $(\Schur_\lambda V)_S\coloneq \Schur_\lambda(V_S)$.

\begin{prop}[{\bf Finite generation of Schur functors}]
\label{pr:schurfunctor}
Let $V$ be an FI-module over a field of characteristic 0. Assume that $V$ is finitely generated (resp.\ generated in degree $\leq m$, resp.\ of weight $\leq m$). Then for any partition $\lambda$, the FI-module $\Schur_\lambda(V)$ is finitely generated (resp.\ generated in degree $\leq m\cdot \abs{\lambda}$, resp.\ of weight $\leq m\cdot \abs{\lambda}$).
\end{prop}
\begin{proof}
Let $d=\abs{\lambda}$, and let $c_\lambda\in k[S_d]$ be the Young symmetrizer associated to the partition $\lambda$. The permutation group $S_d$ acts on the FI-module $V^{\otimes d}$ by permuting the factors, so $c_\lambda$ defines an idempotent endomorphism of $V^{\otimes d}$ whose image is isomorphic to $\Schur_\lambda(V)$. 
Propositions~\ref{pr:tensor} and \ref{pr:tensorwt} imply that the FI-module $V^{\otimes d}$ has the desired properties, so the same is true of its quotient $\Schur_\lambda(V)$.
\end{proof}

Applying Theorem~\ref{th:FIequiv} and Proposition~\ref{pr:schurfunctor} to the FI-module $V=M(1)$ over $\Q$ yields the following corollary, which resolves a basic issue left open in \cite{CF} (cf.\ Theorem 3.1 of \cite{CF}). The relative simplicity of the proof given here illustrates the power of the FI-module point of view.
\begin{corollary}[{\bf Stability of Schur functors}]
The sequence of $S_n$--representations $\{\Schur_\lambda(\Q^n)\}$ is monotone in the sense of \cite{Ch} and is uniformly representation stable.
\end{corollary}

Applying Proposition~\ref{pr:schurfunctor} to the finitely generated FI-module $V(\mu)$ from Proposition~\ref{pr:Vlambda} shows that the FI-module $\Schur_\lambda(V(\mu))$ is finitely generated. Theorem~\ref{th:FIequiv} then implies the following.

\begin{prop}[{\bf Schur functors of irreducibles}]  Let $\lambda, \mu$ be partitions.  There exists a finite set $S$ of partitions $\nu$ and a set of nonnegative integers $\beta_{\lambda,\mu}^\nu$ such that 
for all sufficiently large $n$:
\beq
\Schur_\lambda(V(\mu)_n) = \bigoplus_{\nu \in S}  \beta_{\lambda,\mu}^\nu V(\nu)_n
\eeq
\end{prop}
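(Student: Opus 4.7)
The plan is to assemble the proposition directly from the machinery already established. By Proposition~\ref{pr:Vlambda}, $V(\mu)$ is a finitely generated FI-module, finitely generated in degree $\leq |\mu|+\mu_1$ and of weight $\leq |\mu|$ (the latter being clear from the definition, since each $V(\mu)_n = V_{\mu[n]}$ has $|\mu|$ boxes below the first row). Applying Proposition~\ref{pr:schurfunctor} with $V = V(\mu)$ and the partition $\lambda$, we conclude that $\Schur_\lambda(V(\mu))$ is itself a finitely generated FI-module, with weight $\leq |\lambda|\cdot|\mu|$.

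Next, I invoke Theorem~\ref{th:FIequiv}, which guarantees that for any finitely generated FI-module $W$ over a field of characteristic~0, there is a decomposition $W_n \simeq \bigoplus_\nu c_\nu V(\nu)_n$ valid for all sufficiently large $n$, with multiplicities $c_\nu$ independent of $n$. Applied to $W = \Schur_\lambda(V(\mu))$, whose degree-$n$ part is by construction $\Schur_\lambda(V(\mu)_n)$, this immediately yields nonnegative integers $\beta^\nu_{\lambda,\mu}$ and an equality
\[
\Schur_\lambda(V(\mu)_n) \;=\; \bigoplus_\nu \beta^\nu_{\lambda,\mu}\, V(\nu)_n
\]
for all $n$ sufficiently large.

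Finally, I need that the set $S$ of $\nu$ with $\beta^\nu_{\lambda,\mu}\neq 0$ is finite. This follows from the weight bound above: any irreducible constituent $V(\nu)_n$ appearing in $\Schur_\lambda(V(\mu))_n$ satisfies $|\nu|\leq \wt(\Schur_\lambda(V(\mu))) \leq |\lambda|\cdot|\mu|$, and there are only finitely many partitions $\nu$ with $|\nu|\leq |\lambda|\cdot|\mu|$. So $S$ is finite, completing the proof.

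There is essentially no obstacle here: the entire statement is a routine corollary of Proposition~\ref{pr:schurfunctor} together with the equivalence of finite generation and uniform representation stability in Theorem~\ref{th:FIequiv}. The substantive input is already packaged in those results, in particular the preservation of finite generation under Schur functors and the Noetherian property underlying Theorem~\ref{th:FIequiv}.
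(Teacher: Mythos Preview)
Your proof is correct and follows essentially the same route as the paper: apply Proposition~\ref{pr:schurfunctor} to the finitely generated FI-module $V(\mu)$ from Proposition~\ref{pr:Vlambda}, then invoke Theorem~\ref{th:FIequiv}. Your explicit justification that $S$ is finite via the weight bound is a nice addition; the paper leaves this implicit (it follows already from the finite-dimensionality of each $V_n$).
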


\section{\texorpdfstring{$\FIsharp$-modules}{FI\#-modules} and graded FI-modules}
In this section we describe and analyze two types of additional structure frequently carried by the FI-modules that arise in applications.

\subsection{\texorpdfstring{$\FIsharp$-modules}{FI\#-modules}}
\label{ss:fisharp}

Many of the sequences of $S_n$-representations we encounter in applications simultaneously carry both an FI-module and a co-FI-module structure, which are compatible in a certain precise sense. Together these give such sequences an $\FIsharp$-module structure, which is extremely rigid.

\begin{definition}[{\bf \boldmath$\FIsharp$-modules}]  Let $\FIsharp$ be the category whose objects are finite sets, with morphisms being \emph{partially-defined injections} $S\supset A\xrightarrow{\phi}B\subset T$. That is, $\Hom_{\FIsharp}(S,T)$ consists of triples $(A,B,\phi)$ with $A\subseteq S$, $B\subseteq T$ and $\phi\colon A \ra B$ an isomorphism, with the composition of
\[S\supset \phi^{-1}(B)\xrightarrow{\phi}B\subset T\quad\text{and}\quad T\supset C\xrightarrow{\psi}\psi(C)\subset U\quad\text{being}\quad
S\supset \phi^{-1}(B\cap C)\xrightarrow{\psi\circ \phi}\psi(B\cap C)\subset U.
\] 
An \emph{$\FIsharp$-module} $V$ is a functor from $\FIsharp$ to the category of $k$-modules, and $\FIsharpMod=[\FIsharp,\kMod]$ is the category of $\FIsharp$-modules.
 \end{definition}

\begin{xample}
\label{ex:standardFIsharp}
The most basic example is the $\FIsharp$-module $M(1)$ taking a finite set $S$ to the free $k$-module $M(1)_S=\langle e_s|s\in S\rangle$ with basis $S$. A morphism $f\colon S\supset A\xrightarrow{\phi}B\subset T$ induces the map $f_*\colon M(1)_S\to M(1)_T$ defined by $f_*(e_s)=e_{\phi(s)}$ if $s\in A$, and $f_*(e_s)=0$ otherwise.
\end{xample}

\begin{remark}
\label{remark:FIsharpremarks}
The category $\FI$ embeds in $\FIsharp$ by taking only those morphisms of the form $S=S\xrightarrow{\approx} B\subset T$, so every $\FIsharp$-module can be considered as an FI-module. The category $\coFI$ also embeds in $\FIsharp$ by taking only those morphisms of the form $S\supset A\xrightarrow{\approx} T=T$, so every $\FIsharp$-module can be considered as a co-FI-module as well. The relations in $\FIsharp$ impose conditions on how these FI-module and co-FI-module structures interact.

In particular, the endomorphisms $\End_{\FIsharp}(\n)$ form the so-called \emph{rook algebra} of rank $n$, so for any $\FIsharp$-module $V$ the $k$-module $V_n$ is a representation of the rook algebra. For the $\FIsharp$-module $M(1)$ in Example~\ref{ex:standardFIsharp}, $M(1)_n$ is the standard representation of the rook algebra on $k^n$. The basic properties of the rook algebra and its representation theory were determined by Munn and Solomon~\cite{So}.

The category $\FIsharp$ is naturally isomorphic to $\FIsharp^{\op}$, by simply taking $S\supset A\xrightarrow{\phi}B\subset T\in \Hom_{\FIsharp}(S,T)$ to $T\supset B\xrightarrow{\phi^{-1}}A\subset S\in \Hom_{\FIsharp^{\op}}(S,T)$. Therefore the dual of an $\FIsharp$-module is naturally an $\FIsharp$-module.
\end{remark}

\begin{xample}[{\bf \boldmath$M(W)$ is an \boldmath$\FIsharp$-module}]
\label{ex:MWFIsharp}
Given an FB-module $W$, we defined the FI-module $M(W)$ in Definition~\ref{def:MW}. It turns out this extends to  a natural $\FIsharp$-module structure on $M(W)$, as follows. To a finite set $S$, the $\FIsharp$-module $M(W)$ assigns
\[M(W)_S\coloneq \bigoplus_{T\subseteq S}W_T.\] To a morphism $f\colon S\supset A\xrightarrow{\phi}B\subset S'$, it assigns the map $f_*\colon M(W)_S\to M(W)_{S'}$ which on the factor $W_T$ is $(\phi|_T)_*\colon W_T\to W_{\phi(T)}$ if $T\subseteq A$, and is $0$ if $T\not\subset A$.
\end{xample}
When restricted to FI-morphisms (those with $A=S$), the condition $T\subseteq A$ is always satisfied, and so this definition agrees with Definition~\ref{def:MW}. This means that when considered as FI-modules, the two definitions of $M(W)$ agree, so there is no conflict of notation.
For example, the $\FIsharp$-module $M(1)$ of Example~\ref{ex:standardFIsharp} becomes  the FI-module $M(1)$ defined in Definition~\ref{def:Mm}.

\para{The classification of $\FIsharp$-modules}
The following classification theorem shows that \emph{every} $\FIsharp$-module is of the form $M(W)$ for some FB-module $W$.
Considering an $\FIsharp$-module as an FI-module provides a forgetful functor $\FIsharpMod\to \FIMod$. By slight abuse of notation, we denote the composition $\FIsharpMod\to \FIMod\xrightarrow{H_0} \FBMod$ also by $H_0$. To reiterate, if $V$ is an $\FIsharp$-module, then $H_0(V)$ is \emph{always} defined by considering $V$ as an FI-module and applying Definition~\ref{def:H0}.
\begin{thm}[{\bf Classification of \boldmath$\FIsharp$-modules}]
\label{th:charFIsharp}
The category of $\FIsharp$-modules is equivalent to the category of FB-modules, via the equivalence of categories
\[M(-)\colon \FBMod\leftrightarrows \FIsharpMod\cocolon H_0(-).\]
Thus every $\FIsharp$-module $V$ is of the form $V=\bigoplus_{i=0}^\infty M(W_i)$,
where $W_i$ is the $S_i$-representation $H_0(V)_i$. %Any map $F\colon V\to V'$ of $\FIsharp$-modules is given by $\bigoplus_i M(F_i)$, where $F_i\colon H_0(V)_i\to H_0(V')_i$ is the induced map of $S_i$-representations.
\end{thm}

\begin{remark}
Pirashvili established in \cite[Theorem~3.1]{Pi} an equivalence between the category of $\Gamma$-modules and FS-modules; here $\Gamma$ is the category of finite based sets and all based maps, and FS is the category of finite sets and surjections. The classification of $\FIsharp$-modules in Theorem~\ref{th:charFIsharp} is closely related to Pirashvili's result, although technically neither follows from the other. 

Since FB is a full subcategory of FS,  an FS-module can be thought of as an FB-module with additional morphisms. Moreover, the full subcategory of $\Gamma$ whose morphisms are  based maps that are injective away from the basepoint can be identified with $\FIsharp$, so a $\Gamma$-module can be thought of as an $\FIsharp$-module with certain additional morphisms. Given Theorem~\ref{th:charFIsharp}, one could think of Pirashvili's theorem saying that the necessary \emph{additional} information to promote an $\FIsharp$-module to a $\Gamma$-module is the same as the information needed to promote an FB-module to an FS-module (and indeed, in spirit this is how Pirashvili proves his result).
\end{remark}

\begin{proof}[Proof of Theorem~\ref{th:charFIsharp}] We have already seen the canonical isomorphism $H_0(M(W))\simeq W$ for any FB-module $W$ in Remark~\ref{remark:H0}. It remains to prove that for any $\FIsharp$-module, we have a canonical isomorphism $V\simeq M(H_0(V))$.

Fix $n\geq 0$. We will analyze the category of $\FIsharp$-modules $V$ satisfying the condition
\begin{equation}
\label{eq:FIsharpcond}
V_S=0\quad\text{ for all }S\text{ with }\abs{S}<n.
\end{equation}
The assumption \eqref{eq:FIsharpcond} implies that the quotient \eqref{eq:defH0} defining $H_0(V)_n$ is vacuous, so $H_0(V)_n\simeq V_n$ as $S_n$-representations.
Our goal will be to prove that we have a natural isomorphism
\begin{equation}
\label{eq:FIsharpgoal}
V\simeq M(V_n)\oplus V'\simeq M(H_0(V)_n)\oplus V',
\end{equation} where $V'$ is an $\FIsharp$-module for which $V_S=0$ whenever $\abs{S}\leq n$. The theorem then follows by induction on $n$ (beginning with $n=0$, when the  assumption \eqref{eq:FIsharpcond} is vacuous), since $H_0(V)=\bigoplus_n H_0(V)_n$.

\para{Notation} In this proof we use the following notation. Given a subset $C\subseteq S$, we denote by $I_C\in \End_{\FIsharp}(S)$ the morphism $I_C\colon S\supset C\xrightarrow{\id}C\subset S$. Given a morphism $f\colon S\supset A\xrightarrow{\phi}\phi(A)\subset T$ and subset $C\subset A$, we denote by $f|_C$ the morphism $f|_C\coloneq f\circ I_C=S\supset C\xrightarrow{\phi}\phi(C)\subset T$.

\para{The endomorphism $E$} Under the assumption \eqref{eq:FIsharpcond}, we define an endomorphism $E\colon V\to V$ of $\FIsharp$-modules by taking $E_S\colon V_S\to V_S$ to be:
\begin{equation}
\label{eq:FIsharpE}
E_S = \sum_{\substack{C \subset S\\ \abs{C} = n}} (I_C)_*
\end{equation}

We first verify that $E\colon V\to V$ is a map of $\FIsharp$-modules: given a morphism $f\colon S\supset A\xrightarrow{\phi}B\subset T$, we must check that $f_*\circ E_S=E_T\circ f_*\colon E_S\to E_T$. 

Expanding $f_*\circ E_S$ yields a sum of terms $f_*\circ (I_C)_*=(f\circ I_C)_*=(f|_{C\cap A})_*$, each of which factors as $(f|_{C\cap A})_*\colon V_S\to V_{C\cap A}\to V_T$. But if $C\not\subset A$ we have $\abs{C\cap A}<\abs{C}=n$, in which case our assumption guarantees $V_{C\cap A}=0$. Thus $(f|_{C\cap A})_*=0$ when $C\not\subset A$, and $f_*\circ E_S$ reduces to the sum over $C\subset A$ of $(f|_{C\cap A})_*=(f|_C)_*$. Applying a similar analysis to $I_D\circ f=f|_{\phi^{-1}(D\cap B)}$ shows that  \[f_*\circ E_S=\sum_{\substack{C\subset A\\|C|=n}}(f|_C)_*\qquad\text{and}\qquad E_T\circ f_*=\sum_{\substack{D\subset B\\|D|=n}}(f|_{\phi^{-1}(D)})_*.\] 
Since $C$ and $\phi^{-1}(D)$ range over the same sets, these sums coincide, and so $f_*\circ E_S=E_T\circ f_*$ as desired.

For future reference, we point out that if $V_n=0$, then $E\colon V\to V$ is the zero map. Indeed, this assumption implies that for any $C$ with $\abs{C}=n$ we have $V_C=0$. Since the map $(I_C)_*$ factors as $(I_C)_*\colon V_S\to V_C\to V_S$, we have $(I_C)_*=0$. It follows that $E_S=\sum_C (I_C)_*=0$, as claimed.

If $U$ is another $\FIsharp$-module satisfying \eqref{eq:FIsharpcond}, the definition \eqref{eq:FIsharpE} applies equally well to $U$ and defines an endomorphism $E^U\colon U\to U$.  Any map $F\colon U\to V$ of $\FIsharp$-modules respects the $\FIsharp$-morphisms $(I_C)_*$; since $E$ is defined in terms of these maps $(I_C)_*$, we have $E^V\circ F=F\circ E^U$. 

\para{The decomposition \boldmath$V\simeq EV\oplus \ker[E]$}
The endomorphism $E\colon V\to V$ is idempotent, as we now verify. Using the identity $I_C\circ I_{C'}=I_{C\cap C'}$, we compute \[E_S\circ E_S=\sum_{\substack{C,C'\subset S\\\abs{C}=\abs{C'}=n}}(I_{C\cap C'})_*=\sum_{\substack{C\subset S\\\abs{C}=n}}(I_C)_*=E_S.\] In the second equality, we used that $(I_{C\cap C'})_*$ factors through $V_{C\cap C'}$, and thus vanishes whenever $\abs{C\cap C'}<n$, i.e.\ whenever $C\neq C'$.

Since $E^2=E\in \End_{\FIsharpMod}(V)$, the $\FIsharp$-module $V$ splits as a direct sum $V\simeq EV\oplus \ker[E]$. Moreover, since $E^V\circ F=F\circ E^U$  for any map $F\colon U\to V$ of $\FIsharp$-modules satisfying \eqref{eq:FIsharpcond}, this decomposition $V\simeq EV\oplus \ker[E]$ is natural for all $\FIsharp$-modules satisfying \eqref{eq:FIsharpcond}. We will show that this provides the desired decomposition \eqref{eq:FIsharpgoal}.

When  $\abs{S}=n$ the definition of $E_S$ reduces to $E_S=(I_S)_*=\id\colon V_S\to V_S$, so $\ker[E]_S=0$ for such $S$. Since $V_S=0$ when $\abs{S}<n$, certainly  $\ker[E]_S=0$ in this case as well. Therefore $\ker[E]$ satisfies the desired properties of $V'$ from \eqref{eq:FIsharpgoal}, so it remains to exhibit an isomorphism $M(V_n)\simeq EV$.

\para{The map \boldmath$M(V_n)\to V$} 
We now construct a natural map of $\FIsharp$-modules $F\colon M(V_n)\to V$. We emphasize that without our assumption \eqref{eq:FIsharpcond} that $V_S=0$ when $\abs{S}<n$,  such a map need not exist! 

For readability, let $M\coloneq M(V_n)$. From Example~\ref{ex:MWFIsharp} we have that \[M_S=M(V_n)_S=\bigoplus_{\substack{T\subset S\\\abs{T}=n}}V_T.\] Given $T\subset S$, set $g_T\colon T=T\xrightarrow{\id}T\subset S$; then we define the map $F_S\colon M_S\to V_S$ on the factor $V_T$ to be $(g_T)_*\colon V_T\to V_S$. We must verify that $F\colon M\to V$ is a map of $\FIsharp$-modules, i.e.\ for any morphism $f\colon S\supset A\xrightarrow{\phi}\phi(A)\subset S'$ we have $f_*\circ F_S=F_{S'}\circ f_*\colon M_S\to V_{S'}$.

On the factor $V_T$, the map $F_{S'}\circ f_*$ is 0 if $T\not\subset A$, and is $(g_{\phi(T)})_*\circ (f|_T)_*$ if $T\subset A$.
In contrast, the map $f_*\circ F_S$ on the factor $V_T$ is $(f\circ g_T)_*$, where $f\circ g_T\colon T\supset T\cap A\xrightarrow{\phi|_{T\cap A}}\phi(T\cap A)\subset S'$. When $T\subset A$ this is simply $f\circ g_T\colon T=T\xrightarrow{\phi|_T}\phi(T)\subset S'$, which is indeed equal to $g_{\phi(T)}\circ \phi|_T$. In the remaining case, we can factor $(f\circ g_T)_*\colon V_T\to V_{T\cap A}\to V_{S'}$. When $T\not\subset A$ we have $\abs{T\cap A}<\abs{T}=n$, so our assumption implies $V_{T\cap A}=0$. Thus in this case $(f\circ g_T)_*=0$, as desired. This verifies that $F\colon M(V_n)\to V$ is a map of $\FIsharp$-modules. Since $F$ is constructed from the $\FIsharp$-morphisms $(g_T)_*$, it is natural for $\FIsharp$-modules satisfying \eqref{eq:FIsharpcond}.

\para{The isomorphism \boldmath$M(V_n)\simeq EV$}
Since $M$ satisfies \eqref{eq:FIsharpcond}, the map $F\colon M\to V$ must satisfy $E^V\circ F=F\circ E^M$.
We can easily analyze $E^M$: on the factor $V_T$ of $M_S$, the map $(I_C)_*$ is 0 unless $C=T$, when it is $(I_T)_*=\id$. Thus the sum $E_S=\sum_C (I_C)_*$ acts as the identity on each factor, so $E^M=\id\colon M\to M$.

We conclude that $E^V\circ F=F\circ E^M=F$, so the image of $F\colon M\to V$ is contained in $EV$.
We therefore have
\beq
0\to K\to M \xrightarrow{F} EV\to U\to 0
\eeq
where $K\coloneq \ker[F]$ and $U\coloneq \coker[F]$. 
Since $M_n=V_n=EV_n$ we have $K_n=0$ and $U_n=0$, and we observed above that this implies $E^K=0$ and $E^U=0$. However $E^M=\id$ implies $E^K=\id$, and similarly $E^{EV}=\id$ implies $E^U=\id$. We conclude that $K=0$ and $U=0$, so the natural map $F\colon M(V_n)\to EV$ is an isomorphism, as desired.
\end{proof}

From the classification in Theorem~\ref{th:charFIsharp} we see that if an $\FIsharp$-module $V$ is generated in degree $\leq d$ as an $\FI$-module, the same is true of any sub-$\FIsharp$-module of $V$. More generally, this classification has the following consequences, which in particular imply Theorem~\ref{thm:intro:FIsharp}. 
In any abelian category, one says that an object $V$ is {\em finitely generated} if for any directed family $\{W^i\}_{i\in I}$ of subobjects $W^i\subset V$ with $\bigcup W^i=V$, there exists $N\in I$ such that $W^N=V$. (For FI-modules, it is easy to see that this is equivalent to Definition~\ref{def:finitegen}.)
\begin{theorem}
\label{thm:polyFIsharp}
Let $V$ be an $\FIsharp$-module over $k$. The following conditions are equivalent:
\begin{enumerate}
\item[(i)] $V$ is finitely generated as an $\FIsharp$-module.
\item[(ii)] $V$ is finitely generated as an FI-module.
\item[(iii)] $\bigoplus_{n\geq 0} H_0(V)_n$ is finitely generated as a $k$-module.
\item[(iv)] There exists $d\geq 0$ such that $V_n$ is generated as a $k$-module by $O(n^d)$ elements.
\end{enumerate}
When $k$ is a field, these are also equivalent to the following conditions:
\begin{enumerate}
\item[(v)] There exists $d\geq 0$ such that $\dim_k V_n=O(n^d)$.
\item[(vi)] There exists an integer-valued polynomial $P\in \Q[T]$ such that $\dim_k V_n = P(n)$ for all $n\geq 0$.
\end{enumerate}
When $k$ is a field of characteristic 0, these are also equivalent to the following condition:
\begin{enumerate}
\item[(vii)] There exists a  polynomial $P_V\in \Q[X_1,X_2,\ldots]$ s.t.\ $\chi_{V_n}(\sigma)=P_V(\sigma)$ for all $n\geq 0$ and all $\sigma\in S_n$.
\end{enumerate}
\end{theorem}
\begin{proof} It is immediate that $(ii) \implies (i)$, and we have already seen in Remark~\ref{remark:H0} that $(ii)\iff (iii)$. Consider the FB-module $W=H_0(V)$, so that $V\simeq M(W)$.  Theorem~\ref{th:charFIsharp} states that $(i)$ holds if and only if $H_0(V)$ is finitely generated as an FB-module. Since $k[S_n]$ is finite over $k$, this implies that $\bigoplus_{n\geq 0}H_0(V)_n$ is finitely generated as a $k$-module, so $(i)\implies (iii)$.

Let $c_i$ be the smallest number of generators for the $k$-module $H_0(V)_i=W_i$.  Condition $(iii)$ states that there exists $d$ such that $c_i<\infty$ for all $i\geq d$ and $c_i=0$ for all $i>d$.
From \eqref{eq:MWdecomp} we have $M(W)_n\simeq \bigoplus_{T\subset \n} W_T$, so as $k$-modules we have $V_n=M(W)_n\simeq \bigoplus_{i\geq 0}W_i^{\oplus \binom{n}{i}}$. This is generated by $\sum_{i\geq 0} c_i\cdot \binom{n}{i}$ elements, so $(iii)\implies (iv)$. Conversely, assume that $V_n$ is generated by $O(n^d)$ elements, so $V_n$ admits a surjection from $k^{c n^d}$ for some constant $c$.  Suppose furthermore that $W_i$ is nonzero for some $i > 0$, and let $\mathfrak{m}$ be a maximal ideal of $k$ with quotient field $\F\coloneq k/\mathfrak{m}$ such that $W_i \otimes_k \F\neq 0$.  The hypothesized surjection shows that the  $\F$-dimension of $k^{c n^d} \otimes_k \F\simeq \F^{cn^d}$ is at least that of $W_i^{\oplus \binom{n}{i}} \otimes_k \F\simeq (W_i\otimes_k \F)^{\oplus\binom{n}{i}}$ for all $n$, which  implies $i \leq d$.  Therefore $W_i$ is zero for all $i > d$. Since $V_i$ surjects to $W_i$, the $k$-module $W_i$ must be finitely generated for $i \leq d$. This shows that $(iv)\implies (iii)$.

When $k$ is a field, $(iv)\iff (v)$ are equivalent by definition, and $(vi)\implies(v)$ is immediate. Under the assumption of $(iii)$, we can take $P(n)$ to be the polynomial $P(n)=\sum_{i=0}^d \dim W_i\cdot \binom{n}{i}$. Then $\dim_k V_n=P(n)$ for all $n\geq 0$, so $(iii)\implies (vi)$.

When $k$ is a field of characteristic 0, specializing the polynomial $P_V$ of $(vii)$ to $P(n)\coloneq P_V(n,0,0,\ldots)$ shows that $(vii)\implies (vi)$. We will prove that $(iii)\implies (vii)$ by providing an explicit formula for the character polynomial $P_V$ under the assumption that $V\simeq M(W)$. % The truth of the claim is  preserved under direct sums, so by Theorem~\ref{th:charFIsharp} we can assume that $V=M(W)$ for some finite-dimensional $S_d$-representation.

Given a partition $\lambda\vdash d$, let $n_i(\lambda)$ be the number of parts of $\lambda$ equal to $i$, so that $\sum i\cdot n_i(\lambda)=\abs{\lambda}$. Define the polynomial $\binom{X_\bullet}{\lambda}\in \Q[X_1,X_2,\ldots]$ by: \[\binom{X_\bullet}{\lambda}\coloneq \binom{X_1}{n_1(\lambda)}\binom{X_2}{n_2(\lambda)}\cdots\binom{X_d}{n_d(\lambda)}\]
For $\lambda\vdash d$, let $\chi_W(\lambda)$ denote the character of $W_d$ on the conjugacy class of $S_d$ whose cycle decomposition is encoded by $\lambda$.
We then define the character polynomial $P_V\in \Q[X_1,X_2,\ldots]$ to be: \[P_V\coloneq \sum_{\lambda}\chi_{W}(\lambda)\binom{X_\bullet}{\lambda}\] %Since  $\binom{X_\bullet}{\lambda}$ has degree $\sum i\cdot n_i(\lambda)=d$, the polynomial  $P_V$ has degree $d$.
The assumption $(iii)$ guarantees that this sum is finite.

To verify that $\chi_{V_n}=P_V$, we directly compute the character of $M(W)_n$. Fix $n\geq 0$ and $\sigma\in S_n$. As above, we have $M(W)_n=\bigoplus_{T\subset \n} W_T$. Only those summands with $\sigma(T)=T$ will contribute to the character of $\sigma$. A subset $T$ fixed by $\sigma$ is a union of orbits; if $T$ is the union of $n_i(\lambda)$ $i$-cycles, $\sigma|_T$ has cycle type $\lambda$ and thus contributes $\chi_{W}(\lambda)$ to $\chi_{M(W)_n}(\sigma)$. The number of such subsets is $\binom{X_1(\sigma)}{n_1(\lambda)}\cdots\binom{X_d(\sigma)}{n_d(\lambda)}=\binom{X_\bullet}{\lambda}(\sigma)$. We conclude that  $\chi_{M(W)_n}(\sigma)= \sum\chi_W(\lambda)\binom{X_\bullet}{\lambda}(\sigma)= P_V(\sigma)$ for any $n\geq 0$ and any $\sigma\in S_n$. This shows that $(iii)\implies(vii)$, completing the proof of the theorem.
\end{proof}

Finally, we have the following corollary regarding representation stability for $\FIsharp$-modules.
\begin{corollary}
\label{co:fisharprange}
Let $V$ be an $\FIsharp$-module over a field of characteristic~0. The sequence of $S_n$-representations $\{V_n,\phi_n\}$ is uniformly representation stable with stable range $n\geq 2\cdot\wt(V)$.
\end{corollary}
\begin{proof}
We first show that $\stabD(V)\leq \wt(V)$.  By Theorem~\ref{th:charFIsharp}, we have an isomorphism $V\simeq \bigoplus_\lambda M(\lambda)^{\oplus c_\lambda}$. Applying Propositions~\ref{pr:stabdegMlambda} and \ref{pr:Mmuweight}, each factor $M(\lambda)$ occurring in this sum satisfies $\stabD(M(\lambda))=\lambda_1\leq \abs{\lambda}=\wt(M(\lambda))$, so the same inequality holds for $V$. The corollary now follows from Proposition~\ref{pr:stabdegstabrange}.
\end{proof}

\remove{
\begin{alphremark}[{\bf Finite projective resolutions}]
\label{rem:finiteprojresolution} Over a field of characteristic~0, Remark~\ref{rem:projective} implies that every projective FI-module extends to an $\FIsharp$-module. So Theorem~\ref{thm:polyFIsharp} implies that the characters of any finitely generated projective FI-module are polynomial for all $n\geq 0$. Since characters are additive in exact sequences, the same is true for any finitely generated FI-module admitting a finite projective resolution: the character $\chi_{V_n}$ is given by the character polynomial $P_V$ for \emph{all} $n\geq 0$.

So if this is \emph{not} the case, then $V$ does not have finite projective dimension.  For example, let $W$ be the FI-module from Remark~\ref{rem:nonsemisimple} with $W_0=k$ and $W_n=0$ for each $n > 0$. This has character polynomial $P_V = 0$, but when $n=0$, the character $\chi_{V_0}(\text{id})=\dim V_0=1$ fails to agree with $P_V(0,\ldots,0)=0$, so this FI-module has no finite projective resolution. %%We did however give an \emph{infinite} projective resolution for $V$ in \eqref{eq:signresolution}. [false except in char 0, in general sign rep is not projective k[S_n]-module]
\end{alphremark}
}

\remove{
\para{Tensor products of $\FIsharp$-modules}
When $k$ is a field of characteristic~0,  Remark~\ref{rem:projective} and Theorem~\ref{th:charFIsharp} together imply
 that
the projective FI-modules are precisely those that can be extended to $\FIsharp$-modules. The tensor product of two $\FIsharp$-modules is an $\FIsharp$-module, so in particular the tensor project of two projective FI-modules over a field of characteristic~0 is projective.

Recall the $\FIsharp$-module $M(\lambda)=M(V_\lambda)$ generated by the single irreducible representation $V_\lambda$. Theorem~\ref{th:charFIsharp} states that every $\FIsharp$-module is a direct sum of $\FIsharp$-modules $M(\nu)$. In particular, every tensor product $M(\lambda)\otimes M(\mu)$ decomposes as a direct sum of finitely many $\FIsharp$-modules $M(\nu)$. It is not hard to compute small examples by hand; for instance:
\begin{equation}
\label{eq:dlmn}
\begin{array}{rcl}
M(\y{1}) \tensor M(\y{1}) &= & M(\y{2}) \oplus M\big(\x\y{1,1}\x\big) \oplus M(\y{1}) \\[2pt]
M(\y{1}) \tensor M(\y{2}) &=&  M(\y{3}) \oplus M\big(\x\y{2,1}\x\big) \oplus M(\y{2}) \oplus M\big(\x\y{1,1}\x\big) \\[2pt]
M(\y{2}) \tensor M(\y{2}) &= & M(\y{4}) \oplus M\big(\x\y{3,1}\x\big) \oplus M\big(\x\y{2,2}\x\big)\\&&\!\! {}\oplus M(\y{3}) \oplus M\big(\x\y{2,1}\x\big)^{\oplus 2} \oplus M\Big(\x\y{1,1,1}\x\Big) \oplus M(\y{2}).
\end{array}  
\end{equation}
These computations have natural combinatorial interpretations. For example, $M(1)=M(\y{1})$ takes a finite set $S$ to the vector space with basis $S$, so $M(\y{1})\otimes M(\y{1})$ associates to $S$  the vector space with basis the ordered pairs $(x,y)$ from $S$. We can partition these into those pairs with $x=y$, yielding the summand $M(\y{1})$, and those with $x\neq y$, yielding the summand $M(2)=M(\y{2})\oplus M\big(\x\y{1,1}\x)$.

For any partitions $\lambda$ and $\mu$ we have a direct sum decomposition
\begin{equation}
\label{eq:Kronecker}
M(\lambda) \tensor M(\mu) = \bigoplus_\nu d_{\lambda, \mu}^\nu M(\nu)
\end{equation}
The coefficients $d_{\lambda,\mu}^\nu$ are nonnegative integers, and it can be shown that $d_{\lambda,\mu}^\nu$ is only nonzero when $\max(\abs{\lambda},\abs{\mu})\leq \abs{\nu}\leq \abs{\lambda}+\abs{\mu}$.
It is straightforward to check that when $\abs{\lambda} = \abs{\mu} = \abs{\nu}=n$, the coefficient $d_{\lambda, \mu}^\nu$ is equal to the Kronecker coefficient (the multiplicity of $V_\nu$ in $V_\lambda\otimes V_\mu$).

Moreover, any Schur functor $\Schur_\lambda$ yields an $\FIsharp$-module $\Schur_\lambda(M(\y{1}))$ whose ``leading term'' is $M(\lambda)$, in the sense that $\Schur_\lambda(M(\y{1}))=M(\lambda)\oplus V$ where $V$ is generated in degrees $<\abs{\lambda}$.
It follows that when $\abs{\nu} = \abs{\lambda} + \abs{\mu}$, the coefficient $d_{\lambda, \mu}^\nu$ is equal to the Littlewood-Richardson coefficient $c_{\lambda, \mu}^\nu$ (the coefficient of $\Schur_\nu W$ in $\Schur_\lambda W\otimes \Schur_\mu W$). The honeycomb model used by Knutson--Tao in their proof of the saturation conjecture \cite{KT} gives a geometric interpretation for
the Littlewood--Richardson coefficients $c_{\lambda,\mu}^\nu$ as the number of integer
points in a certain Berenstein--Zelevinsky polytope. It would be very
interesting to find a similar geometric interpretation for the
coefficients $d_{\lambda,\mu}^\nu$.

\begin{alphprob}[{\bf $\FIsharp$-module tensor coefficients}]
Give a geometric interpretation of the structural coefficients $d_{\lambda,\mu}^\nu$ in \eqref{eq:Kronecker}.  Give a method for determining which of these coefficients are nonzero.
\end{alphprob}
}

\subsection{Graded FI-modules and FI-algebras} 
\label{ss:graded}

The FI-modules that arise in the applications of \S\ref{section:FIalgebrasandcoinv}, \S\ref{section:configspaces}, and \S\ref{section:moduli} all come with a natural grading.  When this happens, the FI-module will almost never be finitely generated, even if the graded pieces are.
For example, the FI-module that associates to a finite set $S$ the polynomial algebra ${R_S\coloneq \Q[x_s\,|\,s\in S]}$ is definitely not finitely generated, since $R_S$ is not even finite-dimensional for a fixed set $S$. However, for any fixed $i\geq 0$, the homogeneous degree-$i$ polynomials form a sub-FI-module $R^i$ which is finitely generated in degree $\leq i$. This example motivates the following definition.

Recall that a graded FI-module $V$ is a functor from $\FI$ to $\N$-graded $k$-modules. For each $i\geq 0$, restricting $V_S$ to the piece $V^i_S$ in grading $i$ yields an FI-module  $V^i$, and $V$ is equivalent to the collection of FI-modules $\{V^i\}_{i\in \N}$.
\begin{definition}[{\bf Graded FI-modules of finite type}]
Let $V$ be a graded FI-module. We say that $V$ is \emph{of finite type} if each FI-module $V^i$ is a finitely generated FI-module. If $k$ is a field of characteristic~0, we say that $V$ has \emph{slope $\leq m$} if $\wt(V^i)\leq m\cdot i$ for all $i\in \N$. (For instance, this occurs when $V^i$ is generated in degree $\leq m\cdot i$, via Proposition~\ref{pr:fgweight}.)
\end{definition}
If $V$ is a graded FI-module of finite type, any quotient of $V$ is of finite type.  When $k$ contains $\Q$, Theorem~\ref{th:noetherian} implies that any \emph{subquotient} of $V$ is of finite type. If $V$ has slope $\leq m$, any subquotient of $V$ has slope $\leq m$. (Whenever we speak about slope, we implicitly assume that $k$ is a field of characteristic 0, since the weight of an FI-module is only defined in this case.) Although a graded FI-module over $k$ can be of finite type without having finite slope (if $\wt(V^i)$ grows faster than linearly), we do not know of any interesting examples where this is the case.  

\para{Tensor products of FI-modules of finite type} If $A$ and $B$ are graded $k$-modules, their tensor product $A\tensor B$  is the graded $k$-module defined by
\begin{equation}
\label{eq:gradedtensor}
(A\otimes B)^k=\bigoplus_{i+j=k}A^i\otimes B^j.
\end{equation} Given graded FI-modules $V$ and $W$, their tensor product $V\otimes W$ is the graded FI-module obtained by applying \eqref{eq:gradedtensor} pointwise. The following two propositions show that we can safely take tensor products of graded FI-modules of finite type.
\begin{proposition}
\label{pr:gradedtensor}
Let $U$ and $W$ be graded FI-modules. If $U$ and $W$ are of finite type, the tensor product $U\tensor W$ is of finite type. If $U$ and $W$ have slope $\leq m$, then $U\tensor W$ has slope $\leq m$.
\end{proposition}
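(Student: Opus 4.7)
The plan is to reduce everything to the ungraded tensor product statement of Proposition~\ref{pr:tensor} by using the boundedness hypothesis to ensure that each graded piece of $U \otimes W$ is a \emph{finite} direct sum.

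First I would unpack the grading on the tensor product. By definition,
\[
(U \otimes W)^k \;=\; \bigoplus_{i+j=k} U^i \otimes W^j,
\]
and since both $U$ and $W$ are supported in grades $\geq b$, for a fixed $k$ the index $i$ is constrained to the finite range $b \leq i \leq k-b$ (with $j=k-i$ similarly bounded). So $(U \otimes W)^k$ is a finite direct sum of FI-modules of the form $U^i \otimes W^j$.

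Next, each summand $U^i \otimes W^j$ is a tensor product of two finitely generated FI-modules (since $U$ and $W$ are of finite type), hence finitely generated by Proposition~\ref{pr:tensor}. A finite direct sum of finitely generated FI-modules is finitely generated (immediate from Proposition~\ref{pr:sesfg}), so $(U \otimes W)^k$ is finitely generated, establishing that $U \otimes W$ is of finite type.

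For the slope claim, I would invoke the second half of Proposition~\ref{pr:tensor}, which gives $\wt(U^i \otimes W^j) \leq \wt(U^i) + \wt(W^j)$. Under the assumption that $U$ and $W$ have slope $\leq m$, this yields $\wt(U^i \otimes W^j) \leq m i + m j = m k$ for each summand contributing to $(U \otimes W)^k$. The weight of a direct sum is the maximum of the weights of the summands, so $\wt((U \otimes W)^k) \leq m k$, which is exactly the condition that $U \otimes W$ has slope $\leq m$. There is no real obstacle here; the argument is essentially bookkeeping on top of Proposition~\ref{pr:tensor}, with the only subtle point being the use of the lower bound $b$ to guarantee that the direct sum defining each graded piece is finite.
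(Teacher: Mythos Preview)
Your proof is correct and follows essentially the same approach as the paper: reduce to Proposition~\ref{pr:tensor} on each summand $U^i\otimes W^j$, use the lower bound on the grading to ensure only finitely many summands contribute to each $(U\otimes W)^k$, and then sum the weight bounds for the slope claim. The paper's argument is terser but identical in content.
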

\begin{proof}
By Proposition~\ref{pr:tensor}, $U^i\otimes W^j$ is a finitely generated FI-module for each fixed $i$ and $j$. For fixed $k$ the graded piece $(U\otimes W)^k$ is the direct sum of $k+1$ such summands, and thus is finitely generated. The second claim follows from the inequality $\wt(U^i\otimes W^j)\leq \wt(U^i)+\wt(W^j)\leq mi+mj=mk$ from Proposition~\ref{pr:tensorwt}.
\end{proof}

\para{FI-algebras}
Recall from Remark~\ref{remark:FIalgebras} that a \emph{graded FI-algebra} $A$ is a functor from FI to the category of graded $k$-algebras. We say that a graded FI-algebra is of finite type if the underlying graded FI-module is of finite type.

Given a $k$-module $W$, we denote by $k\{W\}$ the free non-associative algebra on $W$; if $W$ is a graded $k$-module, $k\{W\}$ inherits a grading just as in \eqref{eq:gradedtensor}. Thus if $V$ is a graded FI-module, the free algebra $k\{V\}$ is a graded FI-algebra.

Given a graded sub-FI-module $V\subset A$, we say that $A$ is \emph{generated} by $V$ if $V_S$ generates $A_S$ as a $k$-algebra for all finite sets $S$. The inclusion $V\subset A$ induces a natural map $k\{V\}\to A$, and $A$ is generated by $V$ exactly when this map is a surjection $k\{V\}\onto A$.

Given $\Sigma\subset \coprod A_n^i$, we denote by $((\Sigma))\subset A$ the FI-ideal generated by $\Sigma$, i.e.\ the smallest FI-ideal of $A$ containing $\Sigma$.
If $V\subset A$ is a graded sub-FI-module, the FI-ideal $((V))\subset A$ is easy to describe: $((V))_S\subset A_S$ is simply the ideal in $A_S$ generated by $V_S$. 

\begin{theorem}[{\bf Algebras generated by finite type FI-modules}]
\label{thm:fisubalgebra}
Let $A$ be a graded FI-algebra generated by a graded sub-FI-module $V\subset A$ with  $V^0=0$. If $V$ is of finite type then $A$ is of finite type. If $V$ has slope $\leq m$ then $A$ has slope $\leq m$.
\end{theorem}
\begin{proof}
We first consider $k\{V\}$, which decomposes as a graded FI-module as \[k\{V\} \simeq \bigoplus_{j=0}^\infty (V^{\otimes j})^{\oplus P_j}\]
where $P_j$ is the set of parenthesizations of a $j$-fold product (so that $\abs{P_j}$ is the $j$-th Catalan number).

If $V$ is of finite type (resp.\ has slope $\leq m$), the same is true of each summand $V^{\otimes j}$ by Proposition~\ref{pr:gradedtensor}.
Moreover, since $V^i\neq 0$ only for $i\geq 1$, we have $(V^{\otimes j})^i\neq 0$ only for $i\geq j$. For fixed $i\in \N$ we thus have a finite sum $k\{V\}^i=\bigoplus_{j=0}^i\big((V^{\otimes j})^i\big)^{\oplus P_j}$. We conclude that the FI-module $k\{V \}^i$ is finitely generated (resp.\ of weight $\leq m\cdot i$) for each $i$. By assumption we have a surjection $k\{V\}\onto A$, so the same is true of $A^i$; this completes the proof.
\end{proof}

Theorem~\ref{thm:fisubalgebra} 
applies to each of the FI-algebras
\[
\begin{array}{l}
k\{V\} \coloneq \text{the free  algebra on $V$}\\
k\langle V\rangle \coloneq \text{the free associative algebra on $V$}\\
k[V]\coloneq \text{the free commutative algebra on $V$}\\
\L(V)\coloneq \text{the free Lie algebra on $V$}\\
\Gamma[V]\coloneq \text{the free graded-commutative $k$-algebra on $V$}
\end{array}
\]
as well as any quotient of these algebras by any FI-ideal. Many of the applications in the second half of this paper will follow directly from these results.

\para{Co-FI-algebras} Let $W$ be a graded co-FI-module over a field $k$. We say that $W$ is \emph{of finite type} (resp.\ \emph{has slope $\leq m$}) if its dual $W^*$ is of finite type (resp.\ has slope $\leq m$) as a graded FI-module. We say a graded co-FI-algebra $A$ is generated by $W\subset A$ if the natural map $k\{W\}\to A$ is surjective.

If $W$ is of finite type then any $U\subset W$ is of finite type since the injection $U\into W$ induces a surjection $W^*\onto U^*$. Similarly, when $\car(k)=0$, any quotient of $W$ has finite type by Theorem~\ref{th:noetherian}. The graded FI-module $k\{W\}^*\simeq k\{W^*\}$ is of finite type by Theorem~\ref{thm:fisubalgebra}, so we obtain the following proposition as a corollary.
\begin{prop}
\label{pr:cofisubalgebra} Let $A$ be a graded co-FI-algebra over a field of characteristic~0, generated by a graded co-FI-submodule $W\subset A$ with $W^0=0$. If $W$ is of finite type, $A$ is of finite type. If $V$ has slope $\leq m$, then $A$ has slope $\leq m$.
\end{prop}
\remove{
\begin{alphcorollary}
\label{cor:subalgebraslope}
If a graded FI-algebra (resp.\ co-FI-algebra) $A$ is generated by a sub-FI-module (resp.\ co-FI-module) $V$ concentrated in grading $1$ (i.e.\ $V^i=0$ for $i\neq 1$), then $A$ has slope $\leq \wt(V)$. 
\end{alphcorollary}
}
\pagebreak

\para{Another tensor product $V^{\tensor\bullet}$}
In Sections~\ref{section:FIalgebrasandcoinv} and \ref{section:configspaces} we will need to consider a different sort of tensor product. For any set $S$ and any $k$-module $W$, let $W^{\tensor S}\coloneq \bigotimes_{s\in S}W$. Given $F\colon W\to U$ and a bijection $f\colon S\to T$, let  $F^{\tensor f}\colon W^{\tensor S}\to U^{\tensor T}$ be the map that takes the factor labeled by $s\in S$ to the factor labeled by $f(s)\in T$ via $F$.
\begin{definition}
\label{def:tensorbullet}
Given an $\FIsharp$-module $V$ equipped with a splitting $M(0)\into V\onto M(0)$,  define the $\FIsharp$-module $V^{\tensor\bullet}$ by $(V^{\tensor\bullet})_S\coloneq (V_S)^{\otimes S}$. 
 Given $f\colon S\supset A\xrightarrow{\phi}B\subset T$, let $f_*\colon (V^{\tensor\bullet})_S\to (V^{\tensor\bullet})_T$ be the  composition
\begin{equation}
\label{eq:tensorbullet}
\bigotimes_{s\in S}V_S\onto  \bigotimes_{s\in A}V_S\xrightarrow{(f_*)^{\otimes \phi}}\bigotimes_{t\in B}V_T\into \bigotimes_{t\in T}V_T
\end{equation}
Here the first map applies the surjection $V_S\onto M(0)_S\simeq k$ to each factor with $s\not\in A$, followed by the canonical isomorphism $(V_S)^{\tensor A}\tensor k^{\tensor S-A}\simeq (V_S)^{\tensor A}$. Similarly the last map composes the isomorphism $(V_T)^{\tensor B}\simeq (V_T)^{\tensor B}\tensor k^{\tensor T-B}$ with the inclusions $k\simeq M(0)_T\into V_T$.

We can apply the construction \eqref{eq:tensorbullet} to FI-modules and co-FI-modules as well. Given an FI-module $V$ equipped with an injection $M(0)\into V$, let $V^{\tensor\bullet}$ be the FI-module with $(V^{\tensor\bullet})_S\coloneq (V_S)^{\otimes S}$, where $f\colon S\into T$ acts by \[f_*\colon
(V_S)^{\tensor S}\xrightarrow{(f_*)^{\otimes f}}(V_T)^{\otimes f(S)}\into (V_T)^{\otimes T}.\]

Given a co-FI-module $V$ equipped with an surjection $V\onto M(0)$, let $V^{\tensor\bullet}$ be the co-FI-module with $(V^{\tensor\bullet})_S\coloneq (V_S)^{\otimes S}$, where $f\colon S\into T$ acts by \[f_*\colon
(V_T)^{\tensor T}\onto (V_T)^{\tensor f(S)}\xrightarrow{(f_*)^{\otimes {f^{-1}}}}(V_S)^{\otimes S}.\]

If $V$ is a graded $\FIsharp$-module (or FI-module, or co-FI-module), the individual pieces $(V_S)^{\otimes S}$ are graded $k$-modules. In this case we consider $M(0)$ as concentrated in grading 0 (i.e.\ $M(0)^i=0$ for $i\neq 0$), and require that the maps $M(0)\into V$ and/or $V\onto M(0)$ preserve the grading. The grading is then preserved by the morphisms $f_*\colon (V_S)^{\otimes S}\to (V_T)^{\otimes T}$, and so $V^{\otimes \bullet}$ is a graded $\FIsharp$-module (or FI-module, or co-FI-module).

If $V$ is an $\FIsharp$-algebra (or FI-algebra, or co-FI-algebra), the individual pieces $(V_S)^{\otimes S}$ are $k$-algebras. In this case we consider $M(0)$ as an $\FIsharp$-algebra with multiplication $M(0)_S\otimes M(0)_S=k\otimes k\simeq k=M(0)_S$, and require that the maps $M(0)\into V$ and/or $V\onto M(0)$ are maps of $\FIsharp$-algebras (or FI-algebras, or co-FI-algebras). The morphisms $f_*\colon (V_S)^{\otimes S}\to (V_T)^{\otimes T}$ are then maps of $k$-algebras, and so $V^{\otimes \bullet}$ is an $\FIsharp$-algebra (or FI-algebra, or co-FI-algebra).
\end{definition}

\begin{remark}
If $U$ is a graded $k$-module with splitting $k\into U^0\onto k$, we can consider $U$ as a ``constant'' graded $\FIsharp$-module, sending every finite set $S$ to $U$ and every morphism to the identity. Definition~\ref{def:tensorbullet} thus defines a graded $\FIsharp$-module $U^{\otimes \bullet}$, which satisfies $(U^{\tensor\bullet})_n\simeq U^{\otimes n}$.
\end{remark}

\begin{proposition}
\label{pr:Vtensorbullet}
Let $V$ be a graded FI-module endowed with an isomorphism $V^0\simeq M(0)$. If $V$ is of finite type then the FI-module $V^{\tensor\bullet}$ is of finite type.
\end{proposition}
\begin{proof}
Consider a graded FI-module $M$ that is free in each grading, meaning that there exists an index set $L$, and for each $\ell\in L$ numbers $m_\ell\in \N$ and $i_\ell\in \N$, so that $M^i\simeq \bigoplus_{i_\ell=i}M(m_\ell)$. 
Every graded FI-module $V$ admits a surjection $M\onto V$ from such a graded FI-module $M$. If $V$ is of finite type, then we can take $M$ to be of finite type, meaning that $\{\ell \in L\,|\, i_\ell=j\}$ is finite for each $j$. If $V^0\simeq M(0)$, we can assume that there is a unique $\ell_0\in L$ with $i_{\ell_0}=0$, and it satisfies $m_{\ell_0}=0$. Since a surjection $M\onto V$ induces a surjection $M^{\tensor\bullet}\onto V^{\tensor \bullet}$, it suffices to prove that $M^{\tensor\bullet}$ is of finite type.  

A basis for $M^{\tensor\bullet}_S\simeq (M_S)^{\tensor S}$ is given by a choice of indices $\eta\colon S\to L$, and for each $s\in S$ an injection $g_s\colon \m_{\eta(s)}\into S$. For such a basis element, the multiset $\eta(S)$ can be written uniquely for some $j\leq \abs{S}$ as $\{\ell_1,\ldots,\ell_j\}\cup \{\ell_0,\ldots,\ell_0\}$ with $\ell_1,\ldots,\ell_j\in L-\{\ell_0\}$. Under any FI-map $f_*\colon M^{\tensor\bullet}_S\to M^{\tensor \bullet}_T$, such a basis element is taken to another basis element determining the same multiset $\uell=\{\ell_1,\ldots,\ell_j\}$. Therefore the graded FI-module $M^{\tensor\bullet}$ splits as a direct sum $M^{\tensor\bullet}=\bigoplus_{\uell}  M^{\tensor\bullet}_{\uell}$ indexed by finite multisets $\uell=\{\ell_1,\ldots,\ell_j\}$ of elements of $L-\{\ell_0\}$.

We now show that each summand $M^{\tensor\bullet}_{\uell}$ is finitely generated.
Fix a multiset $\uell=\{\ell_1,\ldots,\ell_j\}$ of elements of $L-\{\ell_0\}$. 
For any finite set $T$, a basis for $(M^{\tensor\bullet}_{\uell})_T$ is determined by
\begin{equation}
\label{eq:Mtensorbulletbasis}
\big\{\ (\eta\colon T\to L,g_t\colon \m_{\eta(t)}\into T)\,\big|\,\eta(T)=\{\ell_1,\ldots,\ell_j\}\cup \{\ell_0,\ldots,\ell_0\}\ \big\}
\end{equation}  If $\abs{T}>j+m_{\ell_1}+\cdots+m_{\ell_j}$, there must exist some $t\in T$ with $\eta(t)=\ell_0$ such that $t\not\in \im(g_{t'})$ for all $t'\in T$. Set $S\coloneq T-\{t\}$, so that $\im(g_s)\subset S$ for all $s\in S$, and let $f\colon S\into T$ be the inclusion. Under the induced map $f_*\colon (M^{\tensor\bullet}_{\uell})_S\to (M^{\tensor\bullet}_{\uell})_T$, the basis element $(\eta|_S,g_s)$ of $(M^{\tensor\bullet}_{\uell})_S$ is sent to the basis element $(\eta,g_t)$ of $(M^{\tensor\bullet}_{\uell})_T$. This shows that the FI-module $M^{\tensor\bullet}_{\uell}$ is generated in degree $\leq j+m_{\ell_1}+\cdots+m_{\ell_j}$. Since the basis \eqref{eq:Mtensorbulletbasis} is finite for each $T$, it follows that $M^{\tensor\bullet}_{\uell}$ is finitely generated.

The summand $M^{\tensor\bullet}_{\uell}$ contributes to $(M^{\tensor\bullet})^i$ only in grading $i=i_{\ell_1}+\cdots+i_{\ell_j}$. Since $\{\ell\in L\,|\,i_\ell=j\}$ is finite for each $j$, for fixed $i\in \N$ there are only finitely many multisets $\uell$ with $i=i_{\ell_1}+\cdots+i_{\ell_j}$. Therefore for each $i\in \N$ the FI-module $(M^{\tensor\bullet})^i$ is a finite direct sum of finitely generated FI-modules $M^{\tensor\bullet}_{\uell}$, so the graded FI-module $M^{\tensor\bullet}$ is of finite type, as desired.
\end{proof}

\section{Applications:  Coinvariant algebras and rank varieties}
\label{section:FIalgebrasandcoinv}
In this section we apply the theory of FI-modules
to explicit  FI-algebras arising in algebraic combinatorics and algebraic geometry, obtaining concrete results about important families of $S_n$-representations.

\subsection{Algebras with explicit presentations}
One way that FI-algebras and co-FI-algebras naturally arise is from explicit presentations by generators and relations. For example, we can consider the free commutative graded $\FIsharp$-algebra $k[M(1)]$ generated by $M(1)$, so that $k[M(1)]_n$ is the polynomial algebra $k[x_1,\ldots,x_n]$. We will consider $k[M(1)]$ as an $\FIsharp$-algebra, an FI-algebra, or a co-FI-algebra, depending on the application.

\begin{definition}[{\bf Symmetrically presented FI-algebra}]
Given a collection $\PP=\{P_j(x_1,\ldots,x_{m_j})\}$ of homogeneous polynomials, the \emph{symmetrically presented FI-algebra} $R(\PP)$ is the quotient $k[M(1)]/((\PP))$.
\end{definition}
By Theorem~\ref{thm:fisubalgebra}, $R(\PP)$ is a graded FI-algebra of finite type. Concretely, $R(\PP)_S$ is the quotient of the polynomial ring $k[x_s\,|\,s\in S]$ by the ideal generated by the polynomials $P_i(x_{f(1)},\ldots ,x_{f(m_i)})$
for all $i$ and and all injections $f\colon \mathbf{m_i}\into S$.

\begin{xample}[{\bf Nilpotent rings}]
For $d\geq 2$, let $\PP_d=\{x_1^d\}$. The symmetrically presented FI-algebra $R(\PP_d)$ over $\Q$ corresponds to the sequence of nilpotent rings
\[
R(\PP_d)_n\simeq \Q[x_1,\ldots ,x_n]/(x_1^d,\ldots ,x_n^d)
\]
on which $S_n$ acts by permutations.
Since $R(\PP_d)$ is of finite type, for each $i\geq 0$ the sequence of $S_n$-representations $R(\PP_d)_n^i$ is uniformly representation stable.  When $d=2$, this fact was first proved by Ashraf--Azam--Berceneau in \cite[Corollary 5.2]{AAB}.
\end{xample}
\pagebreak

\remove{
\begin{alphxample}[{\bf The Arnol'd algebra}]
\label{example:arnoldalgebra} Braid groups were first studied by Artin and Hurwitz over a century ago. In this section we focus on the \emph{pure braid group} $P_n$, which is the fundamental group of the configuration space of $n$ distinct complex numbers $(z_1,\ldots,z_n)$ with $z_i\in \C$ and $z_i\neq z_j$. This space is aspherical, so its cohomology coincides with $H^*(P_n;\Q)$.
These cohomology algebras $H^*(P_n;\Q)$ fit together into a graded $\FIsharp$-algebra $H^*(P_\bullet;\Q)$; this follows from a general result that we will prove below in Proposition~\ref{pr:CMhTop}. In this section we combine this $\FIsharp$-algebra structure with a presentation of $H^*(P_n;\Q)$ due to Arnol'd~\cite{Ar} to describe more precisely the cohomology of the pure braid groups.

From Arnold's description, a basis for $H^1(P_\bullet;\Q)_S$ is given by the symbols $\{w_{ij} \,|\, i,j\in S\}$ modulo the relations $w_{ij}=w_{ji}$ and $w_{ii}=0$, and a partial injection $f\colon S\supset A\xrightarrow{\phi}B\subset T$ acts by $f_* w_{ij}=w_{\phi(i),\phi(j)}$ when $i,j\in A$ and $f_*w_{ij}=0$ otherwise. We can immediately identify this as
$H^1(P_\bullet;\Q)\simeq \Sym^2 M(1)/M(1)\simeq  M(\y{2})$.

 Arnol'd~\cite{Ar} proved that the cohomology ring $H^*(P_n;\Q)$ is generated by $H^1(P_n;\Q)$. By Theorem~\ref{thm:fisubalgebra}, the graded $\FIsharp$-algebra $H^*(P_\bullet;\Q)$ is of finite type. Since $H^1(P_\bullet;\Q)$ is generated in degree $2$, Corollary~\ref{cor:subalgebraslope} implies that $H^*(P_\bullet;\Q)$ has slope 2 and that $H^i(P_\bullet;\Q)$ is generated in degree $\leq 2i$.
Corollary~\ref{co:fisharprange} thus implies that for each $i\geq 0$, the sequence $\{H^i(P_n;\Q)\}$ of $S_n$-representations is uniformly representation stable with stable range $n\geq 4i$.
This yields a new proof of one of the main theorems (Theorem 4.1) in \cite{CF}.

\para{Computing the character of $H^i(P_n;\Q)$} Consider $H^1(P_\bullet;\Q)$ as a graded FI-module $W$ with $W^1=H^1(P_\bullet;\Q)$ and $W^i=0$ for $i\neq 1$.
Thus $\Gamma[W]$ is a graded-commutative $\FIsharp$-algebra, which as an FI-algebra is of finite type by Theorem~\ref{thm:fisubalgebra}.
Let $I$ be the $\FIsharp$-ideal in $\Gamma[W]$ 
generated by the quadratic relation
\begin{equation}
\label{eq:arnoldideal}
w_{12}w_{23}+w_{23}w_{31}+w_{31}w_{12}\in \Gamma[W]_3.
\end{equation}
In this case, $I$ is also the FI-ideal generated by this relation \eqref{eq:arnoldideal}. It follows from Arnol'd~\cite{Ar}  that $H^\ast(P_\bullet;\Q)$ is isomorphic to the graded $\FIsharp$-algebra $\Gamma[W]/I$.
Thanks to the classification of $\FIsharp$-modules, this reduces the description of $H^i(P_n;\Q)$ to a finite computation for each $i\geq 0$.
For example, we have
\[H^2(P_n;\Q)\simeq \bwedge^2 H^1(P_n;\Q)/I^{2}.\] 

The quadratic part $I^{2}$ is just the FI-module generated by the relation \eqref{eq:arnoldideal}; since $S_3$ acts on this element of $\Gamma[W]_3$ by the sign representation, this is $I^2\simeq M(\y{1,1,1})$.
We computed in \eqref{eq:dlmn} that \[M(\y{2})\otimes M(\y{2})=M(\y{4}) \oplus M\big(\x\y{3,1}\x\big) \oplus M\big(\x\y{2,2}\x\big)\oplus M(\y{3}) \oplus M\big(\x\y{2,1}\x\big)^{\oplus 2} \oplus M\Big(\x\y{1,1,1}\x\Big) \oplus M(\y{2}).\] Exchanging the two factors gives an isomorphism from $M(\y{2})\otimes M(\y{2})$ to itself. As an isomorphism of $\FIsharp$-modules, it necessarily preserves each of the summands in the decomposition above. By examining the details of the computation of \eqref{eq:dlmn} we can check that this involution acts trivially on $M(\y{4})$, $M\big(\x\y{2,2}\x\big)$, $M(\y{3})$, and $M(\y{2})$; it acts by negation on $M\big(\x\y{3,1}\x\big)$ and $M\Big(\x\y{1,1,1}\x\Big)$; and it exchanges the two $M\big(\x\y{2,1}\x\big)$ summands. We conclude that  $\bwedge^2 M(\y{2}) = 
M\Big(\x\y{1,1,1}\x\Big) \oplus M\big(\x\y{2,1}\x\big) \oplus M\big(\x\y{3,1}\x\big)$.
This yields the description of $H^2(P_n;\Q)$ mentioned in the introduction:
\begin{align*}
H^2(P_\bullet;\Q)
&\simeq \bwedge^2 M\big(\y{2}\big) / M\Big(\x\y{1,1,1}\x\Big)\\
&\simeq M\Big(\x\y{1,1,1}\x\Big) \oplus M\big(\x\y{2,1}\x\big) \oplus M\big(\x\y{3,1}\x\big) / M\Big(\x\y{1,1,1}\x\Big)\\
&\simeq M\big(\x\y{2,1}\x\big) \oplus M\big(\x\y{3,1}\x\big)
\end{align*}
which specifies $H^2(P_n;\Q)$ for all $n$ simultaneously.
Based on computer calculations, John Wiltshire-Gordon (personal communication) has formulated a precise conjecture 
for the decomposition of
$H^i(P_n;\Q)$ as a sum of $\FIsharp$-modules $M(\lambda)$ for all $i\geq 0$.
\end{alphxample}
}

\para{Coinvariant algebras}
\label{section:coinvariants}
The \emph{symmetric polynomials} are the $S_n$-invariants in the ring of polynomials $\Q[x_1,\ldots,x_n]$, where $S_n$ acts by permuting the variables.
Let $J_n=\big(\Q[x_1,\ldots,x_n]^{S_n}_{> 0}\big)$ be the ideal generated by symmetric polynomials with vanishing constant term.  The classical \emph{coinvariant algebra} $R(n)$ is the quotient $R(n)\coloneq \Q[x_1,\ldots ,x_n]/J_n$. Chevalley \cite[Theorem B]{Chevalley}  proved that $R(n)$ is isomorphic as 
an $S_n$-representation to the regular representation $\Q[S_n]$.

We cannot analyze $R(n)$ using Theorem~\ref{thm:fisubalgebra}, because the ideals $J_n$ do not together form an FI-ideal.  For example, the inclusion $\n\into \npone$ takes the symmetric polynomial $x_1+\cdots + x_n\in  \Q[x_1,\ldots,x_n]^{S_n}$ to the non-symmetric polynomial $x_1+\cdots+x_n\not\in \Q[x_1,\ldots ,x_n,x_{n+1}]^{S_{n+1}}$. 
Fortunately, we can understand the coinvariant algebra by instead using the natural \emph{co-FI-module} structure on the algebra of polynomials. 

Given a co-FI-module $W$, let $W^{\inv}\subset W$ be the submodule consisting of the invariants $W^{\inv}_T\coloneq (W_T)^{S_T}$. Given $f\colon T\into T'$, for any $\sigma\colon T\into T$ there exists $\sigma'\colon T'\into T'$ such that $f\circ \sigma=\sigma'\circ f$. This shows that $f_*\colon W_{T'}\to W_T$ satisfies $f_*(W^{\inv}_{T'})\subset f_*(W^{\inv}_T)$, so $W^{\inv}$ is indeed a co-FI-module. If $W$ is a graded co-FI-module, let $W_{>0}\subset W$ be the part in positive grading, defined by $W^0_{>0}=0$ and $W^i_{>0}=W^i$ for $i>0$.

\begin{definition}
\label{def:generalcoinvariantalgebra}
Given a graded co-FI-algebra $A$, the \emph{coinvariant algebra} $\coinv(A)$ is the graded co-FI-algebra $\coinv(A)\coloneq A/((A^{\inv}_{>0}))$, which assigns to $T$   the algebra $\coinv(A)_T=A_T/((A_T)^{S_T}_{>0})$.
\end{definition}
\begin{prop}
\label{pr:coinvariantfinite}
Let $A$ be a graded co-FI-algebra over a field of characteristic 0. If $A$ is of finite type, $\coinv(A)$ is of finite type. If $A$ has slope $\leq m$, then $\coinv(A)$ has slope $\leq m$.
\end{prop}

\begin{xample}
Consider the graded co-FI-algebra $R\coloneq \coinv(\Q[M(1)])$, which is of finite type with slope $\leq 1$ by Proposition~\ref{pr:coinvariantfinite}. As $S_n$-representations, $R_n\simeq \Q[x_1,\ldots,x_n]/(\Q[x_1,\ldots,x_n])^{S_n}_{>0}\simeq R(n)$. In other words, we have combined the classical coinvariant algebras $R(n)$ into a single co-FI-algebra $R$. Theorem~\ref{theorem:intro:FIequiv} implies that for each $i\geq 0$, the graded pieces $R^i_n\simeq R(n)^i$ of the coinvariant algebra form a uniformly representation stable sequence of $S_n$-representations. A different proof of this result was given in \cite[Theorem 7.4]{CF}, using work of Stanley, Lusztig, and Kraskiewicz--Weyman.
\end{xample}

We can now prove Theorem~\ref{theorem:intro:polynomial:answer} from the introduction, regarding the characters of multivariate diagonal coinvariant algebras, using  Proposition~\ref{pr:coinvariantfinite}.

\begin{proof}[Proof of Theorem~\ref{theorem:intro:polynomial:answer}]
For fixed $r\geq 1$, let $R^{(r)}$ be the graded co-FI-algebra $R^{(r)}\coloneq \coinv(\Q[M(1)^{\oplus r}])$ from Definition~\ref{def:generalcoinvariantalgebra}, with $M(1)^{\oplus r}$ concentrated in grading 1. To the set $\n$ this associates $R^{(r)}_n=\coinv(\Q[M(1)_n^{\oplus r}])$, which is naturally isomorphic to the diagonal coinvariant algebra 
\[
R^{(r)}(n)\coloneq \Q[x_1^{(1)},\ldots ,x_n^{(1)},\ldots ,x_1^{(r)},\ldots ,x_n^{(r)}]/\big(\Q[x_1^{(1)},\ldots ,x_n^{(1)},\ldots ,x_1^{(r)},\ldots ,x_n^{(r)}]_{>0}^{S_n}\big)
\] defined in the introduction. By Proposition~\ref{pr:Mmuweight} we have $\wt(M(1)^{\oplus r})=1$, so Proposition~\ref{pr:cofisubalgebra} implies that $\Q[M(1)^{\oplus r}]$ is of finite type with slope 1. By Proposition~\ref{pr:coinvariantfinite}, $R^{(r)}$ is also of finite type with slope $1$. In other words, for each $i\geq 0$ the FI-module $(R^{(r),i})^*$ is finitely generated with weight $\leq i$. The multi-grading by $J=(j_1,\ldots,j_r)\in \N^r$ considered there is a refinement of our grading; setting $\abs{J}\coloneq j_1+\cdots+j_r$ we have a decomposition $(R^{(r)})^i=\bigoplus_{\abs{J}=i}R^{(r)}_J$ as co-FI-modules. Therefore for any $J\in \N^r$, the FI-module $(R^{(r)}_J)^*$ is finitely generated, and  $\wt((R^{(r)}_J)^*)\leq J$. By Theorem~\ref{thm:persinomial}, this implies that the characters $\chi_{R_J^{(r)}}=\chi_{(R_J^{(r)})^*}$ are eventually given by a polynomial in $X_1,\ldots,X_{\abs{J}}$ of degree $\leq J$, as claimed.
\end{proof}

\subsection{The Bhargava--Satriano algebra}
\label{section:BS}
In \cite{BS} Bhargava and Satriano develop a notion of \emph{Galois closure} for an arbitrary finite-dimensional algebra $R$ over a field $k$, which reduces to the usual notion when $R$ is a field extension (or even an \'{e}tale algebra).  The algebra $G(R/k)$ is a quotient of $R^{\tensor d}$, where $d = \dim_k R$, and the natural action of $S_d$ on $R^{\tensor d}$ descends to an action of $S_d$ on $G(R/k)$ by ``Galois automorphisms'', just as in the classical case.

In many ways, the most interesting case is the most degenerate: consider the ring \beq
R_n\coloneq k[x_1, \ldots, x_n] / (x_i x_j)_{i,j \in \{1,\dots, n\}}
\eeq
which has dimension $n+1$ over $k$ and which carries an action of $S_n$ by permutation of the variables.  We call the Galois closure $G(R_n/k)$ the \emph{Bhargava--Satriano algebra}. Bhargava--Satriano show  \cite[Theorem 7]{BS} that this ring is ``maximally degenerate'' among rings of the same dimension, in the sense that $\dim G(R_n/k)\geq \dim G(T_n/k)$ for any other algebra $T_n$ with $\dim T_n=\dim R_n$.

The ring $G(R_n/k)$ carries an action of $S_n \times S_{n+1}$, the first factor coming from the action of $S_n$ on $R_n$ by field automorphisms, the second factor coming from the Galois automorphisms.  Bhargava--Satriano compute the decomposition of $G(R_n/\Q)$ as a representation of the Galois group $S_{n+1}$, and use this to show that $\dim G(R_n/\Q) > (n+1)!$ for $n\geq 3$. The decomposition of $G(R_n/\Q)$ as an $S_n \times S_{n+1}$-representation is unknown.

\begin{proposition} Assume that $k$ is a field of characteristic~0.
There is a graded co-FI-algebra $BS$ of finite type satisfying $BS_n\simeq G(R_n/k)$ as graded $S_n$-representations, 
where the $S_n$-action on $G(R_n/k)$ is via the diagonal subgroup $S_n \subset S_n\times S_{n+1}$. In particular, the $S_n$-representations $G(R_n/k)^i$ are uniformly representation stable.
\end{proposition}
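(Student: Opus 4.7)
The plan is to realize $BS$ as a quotient of an auxiliary graded co-FI-algebra $\widetilde{R}$ whose $n$-th term is $R_n^{\otimes(n+1)}$, equipped with a co-FI structure carefully chosen to recover the \emph{diagonal} $S_n$-action on the tensor power. First I would recognize $R=\{R_n\}$ as a graded co-FI-algebra: the standard co-FI-algebra structure on $k[M(1)]$ descends through the ideal $(x_ix_j)$, yielding $R$ with $R^0=M(0)$, $R^1=M(1)$, and a canonical algebra augmentation $\epsilon\colon R\twoheadrightarrow M(0)$. I would then set $\widetilde{R}_n\coloneq R_n^{\otimes(n+1)}$ and, for each injection $f\colon\m\hookrightarrow\n$, define the algebra homomorphism $\phi_f\colon\widetilde{R}_n\to\widetilde{R}_m$ by applying $f_*\colon R_n\to R_m$ to the first $m+1$ tensor factors and $\epsilon$ to each of the remaining $n-m$ factors. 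Functoriality is a direct check using $\epsilon\circ f_*=\epsilon$; the crucial consequence is that for $\sigma\in\End_{\coFI}(\n)=S_n$ all $n+1$ factors receive $\sigma_*$ and none are augmented out, so $\phi_\sigma=\sigma_*^{\otimes(n+1)}$ acts by pure diagonal $S_n$-action on $R_n^{\otimes(n+1)}$, with no permutation of tensor positions.

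Next I would exhibit the Bhargava--Satriano ideal as a co-FI-ideal. Since multiplication by any $r\in R_n$ has characteristic polynomial $(t-\epsilon(r))^{n+1}$ on $R_n$, the ideal $J_n\subset\widetilde{R}_n$ is generated by the elements $e_j(\pi_0(r),\ldots,\pi_n(r))-\binom{n+1}{j}\epsilon(r)^j\cdot 1$ for $r\in R_n$ and $1\le j\le n+1$, where $\pi_k$ denotes inclusion into the $k$-th tensor factor. Writing $a=\epsilon(r)$, under $\phi_f$ the symmetric function maps to $\sum_{\ell+k=j}e_\ell(\pi_0(f_*r),\ldots,\pi_m(f_*r))\cdot\binom{n-m}{k}a^k$; reducing each $e_\ell$ modulo $J_m$ to $\binom{m+1}{\ell}a^\ell$ and invoking Vandermonde's identity yields $\binom{n+1}{j}a^j\cdot 1$, exactly cancelling the scalar term. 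Hence $J=\{J_n\}$ is a co-FI-ideal, and $BS\coloneq\widetilde{R}/J$ is a graded co-FI-algebra with $BS_n=G(R_n/k)$ carrying exactly the diagonal $S_n$-action required.

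Finite type for $BS$ reduces by Proposition~\ref{pr:coFIquotient} to finite type for $\widetilde{R}$. Ordering each subset of $\{0,\ldots,n\}$ naturally, I would identify grade-by-grade
\[
\widetilde{R}^i \;\simeq\; C_i\otimes M(1)^{\otimes i}
\]
as co-FI-modules, where $(C_i)_n$ is freely spanned by the size-$i$ subsets of $\{0,\ldots,n\}$ and a subset $J$ is sent to itself when $J\subset\{0,\ldots,m\}$ and to $0$ otherwise. Splitting by whether $0\in J$ identifies the dual FI-module $C_i^*$ with $\binom{\bullet}{i-1}\oplus\binom{\bullet}{i}$, both finitely generated Pieri-type FI-modules, so $C_i$ is a finitely generated co-FI-module and $\widetilde{R}^i=C_i\otimes M(1)^{\otimes i}$ is finitely generated by the dualized form of Proposition~\ref{pr:tensor}. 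The main obstacle, and the reason for the ad hoc ``first $m+1$ factors'' convention, is that the natural co-FI structure supplied by $R\otimes R^{\otimes\bullet}$ via Definition~\ref{def:Vtensorbullet} would permute tensor factors under $\sigma$ and yield a combined diagonal-plus-permutation $S_n$-action on $G(R_n/k)$, which in general differs from the pure diagonal action because the two copies of $S_n$ inside $S_n\times S_{n+1}$ being restricted along are not conjugate.
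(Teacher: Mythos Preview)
Your construction has a genuine gap at the finite-type step, and it stems from the very feature you built in: by insisting that $\phi_f$ always acts on the \emph{first} $m+1$ tensor slots and never permutes tensor positions, you force the $S_n$-action on the position-index co-FI-module $C_i$ to be trivial. Concretely, under your maps the dual FI-module $(C_i)^*$ has $(C_i^*)_n$ freely spanned by the size-$i$ subsets of $\{0,\ldots,n\}$ with \emph{trivial} $S_n$-action, and every FI-map $f_*$ sends $J\subset\{0,\ldots,m\}$ to the same $J\subset\{0,\ldots,n\}$ regardless of $f$. This is \emph{not} the Pieri module $M(\text{triv}_i)$; it is an FI-module whose span of $(C_i^*)_{\le N}$ inside $(C_i^*)_n$ is only the $\binom{N+1}{i}$-dimensional subspace of subsets contained in $\{0,\ldots,N\}$, so $(C_i^*)$ is not finitely generated for $i\ge 1$. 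The same calculation shows $(\widetilde{R}^1)^*\simeq C_1^*\otimes M(1)^*$ is not finitely generated either. So $\widetilde{R}$ is not of finite type and Proposition~\ref{pr:coFIquotient} cannot be invoked.

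The underlying issue is an over-reading of the statement. The phrase ``diagonal action of $S_n\subset S_n\times S_{n+1}$'' means the action through the diagonal embedding $\sigma\mapsto(\sigma,\iota(\sigma))$, i.e.\ $\sigma$ permutes variables \emph{and} permutes tensor factors (fixing one). This is exactly the action the paper obtains from $R\otimes R^{\otimes\bullet}$ via Definition~\ref{def:Vtensorbullet}, and that factor-permutation is precisely what makes the construction of finite type (via Proposition~\ref{pr:Vtensorbullet}). Your worry that $R\otimes R^{\otimes\bullet}$ gives ``the wrong'' action is therefore misplaced: it gives the action the proposition asks for. Your alternative structure realizes the $S_n\times\{1\}$ action instead, and the price for eliminating the tensor-factor permutation is exactly the loss of finite generation you need.
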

\begin{proof}
The Galois closure $G(R_n/k)$ is defined as the quotient of $R_n^{\otimes n+1}$ by certain relations, and
Bhargava--Satriano show \cite[\S11.2]{BS} in this case that the ideal $J_n$ of relations in $R_n^{\otimes n+1}$ is generated by the elements: \[\gamma_n(x_i)\coloneq x_i\otimes 1\otimes\cdots \otimes 1\ +\ 
1\otimes x_i\otimes\cdots \otimes 1\ \ +\ \cdots\ +\ \ 
1\otimes\cdots \otimes 1\otimes x_i\]

Let $R$ be the graded co-FI-algebra $R$ with $R^0\simeq M(0)$, $R^1\simeq M(1)^*$, and $R^i=0$ for $i>1$; the multiplication is uniquely determined, since $R^1\otimes R^1\to R^2=0$. To the set $\n$ this assigns the ring $R_n=k[x_1,\ldots,x_n]/(x_ix_j)$ defined above.

Recall the graded co-FI-algebra $R^{\tensor\bullet}$ from Definition~\ref{def:tensorbullet}, which has $(R^{\tensor\bullet})_n\simeq R_n^{\otimes n}$. 
The action of $S_n$ on $(R\otimes R^{\tensor \bullet})_n$ realizes the diagonal action of $S_n$ on $R_n^{\otimes n+1}$. We  define the co-FI-ideal $J\subset R\otimes R^{\tensor\bullet}$ as follows.
For any finite set $T$ and any $t\in T$, define $\gamma_T(x_t)\in (R\otimes R^{\tensor\bullet})_T$ by 
\[\gamma_T(x_t)\coloneq x_t\otimes (1\otimes \cdots\otimes 1)+ 1\otimes \big(\sum_{t'\in T} 1\otimes \cdots \otimes x_t\otimes \cdots \otimes 1\big).\]
Let $J_T\subset (R\otimes R^{\tensor\bullet})_T$ be the ideal generated by $\gamma_T(x_t)$ for all $t\in T$. For any $f\colon S\into T$, examination shows that the co-FI-module map $f_*\colon (R\otimes R^{\tensor\bullet})_T\to (R\otimes R^{\tensor\bullet})_S$ satisfies $f_*(\gamma_T(x_t))=\gamma_S(x_s)$ if $t=f(s)$, and $f_*(\gamma_T(x_t))=0$ if $t\not\in f(S)$. Therefore the ideals $J_T$ form a co-FI-ideal $J\subset R\otimes R^{\tensor\bullet}$.

We define the graded co-FI-algebra $BS$ to be the quotient $BS\coloneq R\otimes R^{\tensor\bullet}/J$, so that
\[BS_n\simeq R_n^{\otimes n+1}/J_n\simeq G(R_n/k).\]
Since $R$ is of finite type, Propositions~\ref{pr:Vtensorbullet} and \ref{pr:gradedtensor} imply that the graded co-FI-algebra $R\otimes R^{\tensor\bullet}$ is of finite type. Since $\car(k)=0$, its quotient $BS$ is a graded co-FI-algebra of finite type. The representation stability of $(BS^i)^*\simeq BS^i\simeq G(R_n/k)^i$ follows from Theorem~\ref{th:FIequiv}.
\end{proof}

%we could remove this, but I kind of prefer not to -TC
The graded pieces $BS^i$ can be computed for small $i$ by hand.  For example,  the dual of $BS^1$ is isomorphic as an $\FI$-module to
\beq
(BS^1)^* \simeq M(1) \tensor M(1) = M(\y{2}) \oplus M(\y{1,1}) \oplus M(\y{1}).
\eeq
This raises the  question: does the co-FI-algebra $BS$ in fact have the structure of an $\FIsharp$-algebra?

\subsection{Polynomials on rank varieties}
\label{subsection:determinantal}
Let $k$ be a field of arbitrary characteristic.  For $n\geq 1$ let $k[x_{ij}\,|\,i,j\in \n]$ be the coordinates on the  space $\Mat_{n\times n}(k)$ of $n\times n$ matrices over $k$.   
Let $P=(p_1,\ldots ,p_m)$ be an $m$-tuple of polynomials $p_\ell\in k[T]$, and let $r=(r_1,\ldots ,r_m)$ 
be an $m$-tuple of positive integers.  With this data Eisenbud--Saltman~\cite{ES} define the \emph{rank variety} of matrices:
\[X_{P,r}(n)\coloneq \big\{A\in \Mat_{n\times n}(k)\,\big|\, \rank(p_\ell(A))\leq r_\ell, 1\leq \ell\leq m\big\}\]
The special case when $m=1$ and $p_1(T)=T$ is the fundamental example of the \emph{determinantal variety} of  matrices of rank $\leq r_1$. We consider $X_{P,r}(n)$ as an affine variety, defined by the polynomial 
equations 
\[\big\{\det\big(B(p_\ell(A))\big)=0\,\big|\, B\in {\cal B}_\ell,  1\leq \ell\leq m\big\}\]
where ${\cal B}_\ell$ denotes the set of $(r_\ell+1)\times (r_\ell+1)$ minors of the matrix $(x_{ij})$.   Let $J_n$ be the ideal of $k[\{x_{ij}\}]$ generated by the homogeneous polynomials $\det(B(p_\ell(A)))$.  The coordinate ring $\O(X_{P,r}(n))$ 
is isomorphic by definition to $k[\{x_{ij}\}]/J_n$, and inherits a grading from $k[\{x_{ij}\}]$. 

\begin{question}
\label{question:rankvar}
What is the dimension of the space of degree $d\geq 1$ polynomials on $X_{P,r}(n)$; that is, what is the dimension of the degree $d\geq 1$ component of  $\O(X_{P,r}(n))\simeq k[\{x_{ij}\}]/J_n$?
\end{question}

In the special case of determinantal varieties in characteristic~0, an answer to Question~\ref{question:rankvar} can be deduced from work of Lascoux and Weyman (see \cite[Corollary 6.1.5(d)]{Wey}).  While we cannot resolve Question~\ref{question:rankvar} completely, the theory of $\FIsharp$-modules imposes strong constraints on the answer.

\begin{theorem}[{\bf Polynomiality for rank varieties}]
\label{theorem:sections:polygrowth} 
%Fix $d,m\geq 1$, an $m$-tuple $P=(p_1,\ldots ,p_m)$ of polynomials $p_i\in k[T]$, and an $m$-tuple of positive integers $r=(r_1,\ldots ,r_m)$. 
Fix $P$, $r$, and $d$.
Then the dimension of the space $\O(X_{P,r}(n))^d$ of degree-$d$ polynomials on the rank variety $X_{P,r}(n)$ is polynomial in $n$ of degree at most $2d$ for all $n\geq 0$. If $\car k=0$ then the same is true of the characters $\chi_{\O(X_{P,r}(n))^d}$.
%Assuming that $\car k=0$, there exists $s\geq 1$ and a unique polynomial $F_{P,r,d} \in \Q[X_1, X_2,\ldots ,X_s]$ of degree at most $2d$ in the cycle lengths $X_i$ such that as class functions on $S_n$:
%\[\chi_{\O(X_{P,r}(n))^d}=F_{P,r,d}\ \ \text{for all}\ \ n\geq 0.\] 
\end{theorem}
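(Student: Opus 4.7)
The plan is to realize $\{\O(X_{P,r}(n))_d\}_{n\geq 0}$ as a finitely generated $\FIsharp$-module of weight at most $2d$. Granted this, Corollary~\ref{co:polyFIsharp} yields that $\dim \O(X_{P,r}(n))_d$ is polynomial in $n$ for all $n\geq 0$, and Theorem~\ref{thm:persinomialprecise} (in characteristic $0$) gives a character polynomial of degree at most $2d$ valid for all $n\geq 0$; evaluating the character polynomial at the identity then gives the dimension polynomial, whose degree is bounded by $2d$.

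To set up the $\FIsharp$-structure, I first consider the ambient coordinate rings. Under the identification $\Mat_{n\times n}(k)\cong k^n\otimes_k k^n$, matrix space is the $\FIsharp$-module $M(1)\otimes M(1)$, and using $M(1)^*\cong M(1)$ as $\FIsharp$-modules the coordinate ring $\O(\Mat_{n\times n})$ becomes a graded $\FIsharp$-algebra. Its degree-$d$ component $\O(\Mat_{n\times n})_d\cong \Sym^d(M(1)\otimes M(1))$ is finitely generated of weight at most $2d$: Proposition~\ref{pr:tensor} gives $\wt(M(1)\otimes M(1))\leq 2$, and then applying Proposition~\ref{pr:schurfunctor} with $\lambda=(d)$ yields weight at most $2d$ on $\Sym^d(M(1)\otimes M(1))$. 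Explicitly, an $\FIsharp$-morphism $\tau=(A,B,\phi)\colon n\to m$ acts on variables by $y_{kl}\mapsto y_{\phi(k),\phi(l)}$ for $k,l\in A$ and by $y_{kl}\mapsto 0$ otherwise. It will then suffice to show that the ideals $\{I_n\}$ form an $\FIsharp$-subideal, since the quotient will inherit an $\FIsharp$-module structure of weight at most~$2d$.

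The main obstacle will be verifying that $\tau_*(I_n)\subset I_m$ for every $\FIsharp$-morphism $\tau$. Under the structure map, a generator $\det\bigl(B(p_i(A_n))\bigr)$ of $I_n$ is sent to $\det\bigl(B(p_i(\tilde A))\bigr)$, where $\tilde A=\tau_*(A_n)$ is supported on the $A\times A$ block (with block entries a relabeling of the variables of $A_m$); consequently $p_i(\tilde A)$ agrees with $p_i(A_m)$ inside the block and with $p_i(0)\cdot I$ outside it. Cofactor expansion along the rows and columns outside the block reduces the problem to showing that each $p_i(0)^{|D|}\cdot(\text{smaller minor of }p_i(A_m))$ lies in $I_m$. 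When every $p_i(0)=0$ this is automatic, but in general the smaller minors are not among the defining generators of $I_m$. Clearing this hurdle will require either a more refined $\FIsharp$-structure in which out-of-block variables are mapped to constants adapted to the $p_i$, or a direct algebraic argument showing that the relevant smaller-minor terms already belong to $I_m$. Once this step is in place, the cited theorems complete the proof.
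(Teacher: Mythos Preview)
Your approach is exactly the paper's: exhibit $\O(X_{P,r})$ as a quotient of the graded $\FIsharp$-algebra $k[M(1)\otimes M(1)]$ and then invoke Theorem~\ref{thm:fisubalgebra}, Corollary~\ref{co:polyFIsharp}, and Theorem~\ref{thm:persinomialprecise}. The paper simply asserts that ``the rank varieties $X_{P,r}(n)$ fit together into an $\FIsharp$-scheme'' with the same structure maps you write down, and does not verify that the defining ideals are preserved. So the obstacle you have flagged is one the paper does not address either.

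That obstacle is genuine, and in fact the situation is worse than you indicate. Your claim that ``$p_i(\tilde A)$ agrees with $p_i(A_m)$ inside the block'' is only correct when $B=\{1,\ldots,m\}$. For an inclusion-type morphism (with $A=\{1,\ldots,n\}$ full and $B$ a proper subset of $\{1,\ldots,m\}$) there is no zero-padding at all: $\tilde A$ is a principal \emph{submatrix} of $A_m$, and for nonlinear $p_i$ one has $p_i(\text{submatrix})\neq(\text{submatrix of }p_i(A_m))$. Concretely, take $P=(T^2)$, $r=(0)$, and the standard inclusion $\mathbf{1}\hookrightarrow\mathbf{2}$: the generator $x_{11}^2$ of $I_1$ is sent to $x_{11}^2\in\O(\Mat_{2\times 2})$, but $x_{11}^2\notin I_2$, since the matrix $\left(\begin{smallmatrix}1&-1\\1&-1\end{smallmatrix}\right)$ squares to zero while its $(1,1)$ entry is $1$. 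Thus $\{I_n\}$ is not an $\FIsharp$-ideal (indeed not even an FI-ideal) for this $P$, and no adjustment of out-of-block constants can repair this in-block failure. The argument as written---both yours and the paper's---goes through when each $p_i$ is linear (in particular the determinantal case $p_i(T)=T$), where extracting a principal block and zero-padding both commute with $p_i$.
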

\begin{proof}
The matrix varieties $\Mat_{n\times n}$ fit together into an $\FIsharp$-scheme $\Mat$; the affine scheme $\Mat_{S\times S}$ has coordinate ring $\O(\Mat_{S\times S})=k[x_{s,s'}\,|\,s,s'\in S]$. Given $f\colon S\supset A\xrightarrow{\phi}\phi(A)\subset T$, the map $f_*\colon \Mat_{S\times S}\to \Mat_{T\times T}$ is given in coordinates by $x_{\phi(s),\phi(s')}\mapsto x_{s,s'}$ and  $x_{t,t'}\mapsto 0$ if $t\not\in \phi(A)$ or $t'\not\in \phi(A)$.
The rank varieties $X_{P,r}(S)\into \Mat_{S\times S}$ are preserved by these maps, and thus form an $\FIsharp$-subscheme $X_{P,r}\into \Mat$.

The coordinate rings $\O(\Mat)$ and $\O(X_{P,r})$ are naturally $\FIsharp^{\op}$-algebras, but via the isomorphism $\FIsharp\simeq \FIsharp^{\op}$ from Remark~\ref{remark:FIsharpremarks} we can consider them as $\FIsharp$-algebras. For example, from the above formula we see that $\O(\Mat)\simeq k[M(1)\otimes M(1)]$ as a graded $\FIsharp$-algebra. Its quotient  $\O(X_{P,r})$ is thus generated as a graded $\FIsharp$-algebra by the image of $M(1)\otimes M(1)$ in $\O(X_{P,r})^1$.

$M(1)\tensor M(1)$ is finitely generated in degree $2$, so in particular $\wt(M(1)\otimes M(1))=2$. Thus Theorem~\ref{thm:fisubalgebra} implies that $\O(X_{P,r})$ is a graded $\FIsharp$-algebra of finite type and slope $\leq 2$.
%By Proposition~\ref{pr:fisharpstdeg}, this means that for each $d\geq 0$ the $\FIsharp$-module $\O(X_{P,r})^d$ is  finitely generated in degree $\leq 2d$.
The theorem now follows from Theorems~\ref{thm:persinomial} and \ref{thm:polyFIsharp}. 
 \end{proof}
\begin{remark} The rank variety $X_{P,r}(n)$ of Theorem~\ref{theorem:sections:polygrowth} may be non-reduced as a scheme.  However, the same theorem holds for the underlying reduced variety. The proof amounts to replacing the ideal $J_n$ by its radical and applying the same argument. 
\end{remark}

Recent work of Draisma and Kuttler~\cite{DK} proves a bounded-generation result for ``border rank varieties'', which generalize the rank varieties discussed here to the case of tensors of rank higher than $2$.  The key ingredient of their theorem is a Noetherian-type property.  It would be interesting to understand whether their work can be phrased in the language of FI-modules. 

\section{Applications: cohomology of configuration spaces}
\label{section:configspaces}

In this section we apply the theory of FI-modules to the cohomology of configuration spaces of distinct points in a manifold $M$.
We obtain in Theorem~\ref{thm:confncharpoly} and Corollary~\ref{co:vakilwood} strengthenings of earlier results of Church \cite[Theorems 1 and 5]{Ch}.
%Mos of the results of this section represent slight strengthenings of those of the first author in \cite{Ch}. %., and the arguments essentially paraphrase those found there. 
%The use of FI-modules serves to eliminate some of the combinatorial and notational complications that arose in \cite{Ch}. 
When $M$ is an open manifold, we apply the theory of $\FIsharp$-modules to obtain a number of new theorems on the rational, integral, and mod-$p$ cohomology of the configuration spaces of $M$.

\subsection{FI-spaces and co-FI-spaces}
An \emph{FI-space} $X$ is a functor from $\FI$ to the category $\Top$ of topological spaces. These have been considered elsewhere in the topological literature under the names ``$\mathcal{I}$-spaces'' or ``$\Lambda$-spaces''; see 
Remark~\ref{remark:FIspaces} below.
Since homology is functorial, for any FI-space $X$ the homology $H_*(X;k)$ forms a graded FI-module.
Similarly, a \emph{co-FI-space} $X$ is a functor $X\colon \coFI\to\Top$. Since cohomology is contravariantly functorial, for any co-FI-space $X$ the cohomology $H^*(X;k)$ forms a graded FI-algebra.

\begin{definition}
\label{def:coFIMbullet}
Given a topological space $M$, we define the co-FI-space $M^\bullet$ as follows. For any finite set $S$, we have the topological space $M^S$ of functions from $S$ to $M$. An injection $f\colon T\into S$ induces a projection $f_*\colon M^S\onto M^T$ by restriction, given explicitly by $f_*(\varphi)\coloneq \varphi\circ f\colon T\to S\to M$.
\end{definition}

\begin{proposition}
\label{pr:cohoMbullet}
Let $M$ be a connected topological space, and let $k$ be a field. If $H^i(M;k)$ is finite-dimensional for all $i\geq 0$, the graded FI-algebra $H^*(M^\bullet;k)$ is of finite type.
\end{proposition}
\begin{proof}
The graded FI-algebra $H^*(M^\bullet;k)$ associates to a finite set $S$ the cohomology $H^*(M^S;k)$. 
Since $k$ is a field, the K\"{u}nneth theorem implies that $H^*(M^S;k)\simeq H^*(M;k)^{\otimes S}$. It thus seems that $H^*(M^\bullet;k)$ should be isomorphic to the FI-module $H^*(M;k)^{\otimes \bullet}$ defined in Definition~\ref{def:tensorbullet}. This is almost true; the only difference is that the K\"{u}nneth isomorphism $H^*(M^S;k)\simeq H^*(M;k)^{\otimes S}$ depends on an ordering of $S$, and when these factors are permuted, a sign is sometimes introduced depending on the grading.

We can nevertheless deduce the proposition from Proposition~\ref{pr:Vtensorbullet}, since it is easy to see that the proof of that proposition is not affected by this sign. To apply Proposition~\ref{pr:Vtensorbullet}, we consider the graded $k$-module $H^*(M;k)$ as a ``constant'' graded FI-module $V$. The assumption that $M$ is connected guarantees that $H^0(M;k)\simeq k$, which gives an isomorphism $V^0\simeq M(0)$. The remaining hypothesis of Proposition~\ref{pr:Vtensorbullet} is that $V$ is of finite type, or in other words that $H^i(M;k)$ is finite-dimensional for each $i\geq 0$. We conclude that $H^*(M^\bullet;k)$ is of finite type, as claimed.
\end{proof}

\begin{remark}
\label{remark:FIspaces}
Let $M$ be a connected well-based topological space (i.e.\ with specified basepoint $\ast\in M$). We can extend the co-FI-space structure of Definition~\ref{def:coFIMbullet} to consider $M^\bullet$ as an $\FIsharp$-space: given $f\colon S\supset A\xrightarrow{\phi}B\subset T$ and $\varphi\colon S\to M$, we define $f_*\colon M^S\to M^T$ by
\begin{equation}
\label{eq:MbulletFIsharp}
f_*(\varphi)(t)=\begin{cases}\varphi(\phi^{-1}(t))&\text{ if }t\in B\\\ast&\text{ if }t\in T-B\end{cases}
\end{equation}
\pagebreak

Considering $M^\bullet$ as an FI-space, given a map defined on $S$, an inclusion $f\colon S\into T$ simply extends it to $T$ by sending $T-f(S)$ to the basepoint $\ast$. We point out that the direct limit $\colim_{\FI} M^\bullet$ computes the infinite symmetric product $\Sym^\infty M\simeq \colim_{\FI} M^\bullet$, which  by the Dold--Thom theorem satisfies $\pi_i(\Sym^\infty M)\simeq \widetilde{H}_i(M;\Z)$.

In fact, $M^\bullet$ provides an example of an \emph{$\mathcal{I}$-monoid} in the sense of Sagave--Schlichtkrull~\cite{SagaveSch} (a commutative monoid in the category of FI-spaces). Sagave--Schlichtkrull prove that any infinite loop space can be modeled by an $\mathcal{I}$-monoid. For example, the Barratt--Priddy--Quillen theorem implies that $\Omega^\infty\Sigma^\infty M\simeq \hocolim_{\FI} M^\bullet$. Similarly, Quillen's $K$-theory space $K(R)$ (or rather the identity component) is realized by $\hocolim_{\FI} \BGL_\bullet(R)$ for the natural FI-space $\BGL_\bullet(R)$. We will not exploit this connection further in this paper; see \cite[Examples 1.3 and 1.5]{SagaveSch} for these examples and other applications of $\mathcal{I}$-monoids.
\end{remark}

\subsection{Configuration spaces as co-FI-spaces} 
\label{section:fgconfig}
We define the co-FI-space $\Conf(M)$ as a subspace of $M^\bullet$: let $\Conf_S(M)\subset M^S$ be the subspace of \emph{embeddings} $S\into M$:
\[\Conf_S(M)\coloneq \Emb(S,M)\quad\subset \quad M^S\coloneq \Map(S,M)\]
If $\varphi\colon T\into M$ and $f\colon S\into T$ are injective, then $f_*(\varphi)=\varphi\circ f\colon S\into T$ is injective. Thus $\Conf(M)$ is indeed a co-FI-space, and the inclusion $\Conf(M)\into M^\bullet$ is a map of co-FI-spaces.
Of course, the space $\Conf_n(M)=\Emb(\n,M)$ can be identified with the configuration space of  ordered $n$-tuples of distinct points on $M$:
\[\Conf_n(M)=\big\{(x_1,\ldots,x_n)\in M^n\ \big|\  x_i\neq x_j\big\}\]
The following theorem is proved (in different language) in \cite{Ch}. The assumption $\dim M\geq 2$ guarantees that $\Conf_S(M)$ is connected; the condition ${\dim_\Q(H^*(M;\Q))<\infty}$ is satisfied whenever $M$ is compact.
\begin{thm}[{\bf Rational cohomology of configuration spaces}]  
\label{theorem:coho:config2}
Let $M$ be a connected, oriented manifold of dimension at least 2 with $\dim_\Q(H^*(M;\Q))<\infty$.  Then the graded FI-algebra  $H^*(\Conf(M);\Q)$ is of finite type.
\end{thm}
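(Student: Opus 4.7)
The plan is to exploit Totaro's Leray spectral sequence for the open inclusion $j\colon\Conf_n(M)\hookrightarrow M^n$, which is a multiplicative spectral sequence converging to $H^*(\Conf_n(M);\Q)$. By \cite{To}, its $E_2$-page is canonically identified with the quotient of $H^*(M^n;\Q)\otimes E_n$ by the Arnold relations together with the ``diagonal'' relations $G_{ij}\cdot(\pi_i^*\alpha-\pi_j^*\alpha)=0$; here $E_n$ is the appropriately graded-commutative exterior algebra on classes $G_{ij}$ of degree $d-1$ for $i\ne j$, with $d=\dim M$, and $\pi_i\colon M^n\to M$ is projection onto the $i$-th factor.

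First, I would promote this to a spectral sequence of FI-algebras. Because the inclusions $j_n$ form a natural transformation of co-FI-spaces $\Conf(M)\to M^\bullet$, naturality of the Leray spectral sequence provides a spectral sequence in the category of graded FI-algebras, abutting to the graded FI-algebra $H^*(\Conf(M);\Q)$. Under an injection $f\in\Hom_\FI(\m,\n)$ the induced map on $E_2$ acts on $H^*(M^\bullet)=H^*(M)^{\otimes\bullet}$ exactly as in Definition~\ref{def:Vtensorbullet} (inserting the unit $1\in H^0(M;\Q)$ in the slots outside $f(\m)$) and sends $G_{ij}$ to $G_{f(i),f(j)}$.

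Second, I would verify that $E_2$ is a graded FI-algebra of finite type. The constant FI-module with value $H^*(M;\Q)$ is finitely generated, as it is a quotient of $M(0)^{\oplus\dim_\Q H^*(M;\Q)}$ and $\dim_\Q H^*(M;\Q)<\infty$ by hypothesis; so Proposition~\ref{pr:Vtensorbullet} shows that the FI-algebra $H^*(M^\bullet)=H^*(M)^{\otimes\bullet}$ is of finite type. The FI-module $G$ with $G_n=\spn\{G_{ij}:i\ne j\in\{1,\ldots,n\}\}$ and action $G_{ij}\mapsto G_{f(i),f(j)}$ is generated in degree $\le 2$ by $G_{12}$, hence finitely generated; placing $G$ in grade $d-1$, Proposition~\ref{pr:kvft} makes the exterior FI-algebra $\Lambda(G)$ of finite type. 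Proposition~\ref{pr:gradedtensor} then gives finite type for $H^*(M)^{\otimes\bullet}\otimes\Lambda(G)$, and the $E_2$-page is an FI-algebra quotient of this by the FI-ideal generated by the (manifestly FI-equivariant) Arnold and diagonal relations, hence of finite type as well.

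Finally, I would descend to $H^*(\Conf(M);\Q)$. Each differential $d_r$ respects the FI-module structure, so $E_{r+1}$ is an FI-module subquotient of $E_r$ in each bidegree; by Theorem~\ref{th:noetherian} and Proposition~\ref{pr:sesfg}, finite generation propagates from $E_2$ to every $E_r$, and in particular to $E_\infty$. For fixed cohomological degree $k$ the filtration on $H^k(\Conf_n(M);\Q)$ has only finitely many nonzero associated graded pieces $E_\infty^{p,k-p}$, uniformly in $n$, so a finite iteration of Proposition~\ref{pr:sesfg} yields finite generation of $H^k(\Conf(M);\Q)$ as an FI-module; multiplicativity of the spectral sequence transfers the algebra structure. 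The main obstacle is verifying that Totaro's $E_2$-description is equivariant for the co-FI-maps $M^n\to M^m$ and $\Conf_n(M)\to\Conf_m(M)$ induced by injections $\m\hookrightarrow\n$; once this naturality is established, the finite-type conclusion follows entirely from the FI-module machinery of Section~\ref{section:setup}.
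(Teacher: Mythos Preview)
Your proposal is correct and follows essentially the same approach as the paper's proof: both use functoriality of Totaro's Leray spectral sequence for $\Conf(M)\hookrightarrow M^\bullet$ to obtain a spectral sequence of FI-modules, establish that $E_2$ is of finite type (via Proposition~\ref{pr:Vtensorbullet} for $H^*(M^\bullet;\Q)$ and finite generation of the FI-module spanned by the $G_{ij}$), and then invoke the Noetherian property (Theorem~\ref{th:noetherian}) and Proposition~\ref{pr:sesfg} to pass to $E_\infty$ and then to $H^*(\Conf(M);\Q)$. The only cosmetic difference is that the paper appeals directly to Theorem~\ref{thm:fisubalgebra} (since $E_2$ is generated as an algebra by $H^*(M^\bullet;\Q)$ and $G$), whereas you assemble $H^*(M)^{\otimes\bullet}\otimes\Lambda(G)$ via Propositions~\ref{pr:kvft} and~\ref{pr:gradedtensor} and then pass to the quotient; the paper also briefly notes the Koszul sign arising in the identification $H^*(M^\bullet;\Q)\simeq H^*(M;\Q)^{\otimes\bullet}$, which does not affect the argument.
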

\begin{proof}
We will consider the Leray spectral sequence of the inclusion of co-FI-spaces $\Conf(M)\into M^\bullet$. Recall that for any continuous map of topological spaces $f\colon X\to Y$, the Leray spectral sequence with $\Q$ coefficients is a cohomological spectral sequence converging to $H^*(X;\Q)$ with $E_2$ page $E_2^{p,q}\simeq H^p(Y;R^q f_*\underline{\Q})$. These entries are in general difficult to compute, with the exception of the edge terms $E_2^{p,0}\simeq H^p(Y;\Q)$. Recall further that the Leray spectral sequence is functorial: if $f'\colon X'\to Y'$ is another continuous map, and $g_X,g_Y$ are maps $g_X\colon X'\to X$ and $g_Y\colon Y'\to Y$ such that $f\circ g_X=g_Y\circ f'$, then $g$ induces a map of spectral sequences from the Leray spectral sequence for $f\colon X\to Y$ to the Leray spectral sequence for $f'\colon X\to Y$.

For any finite set $S$ we have a continuous map $i_S\colon \Conf_S(M)\into M^S$, so we have a Leray spectral sequence with $E_2^{p,0}\simeq H^p(M^S;\Q)$ converging to $H^*(\Conf_S(M);\Q)$. Moreover for any $f\colon T\into S$ we have $i_S\circ f_*=f_*\circ i_T$ (this is what it means for $i\colon \Conf(M)\into M^\bullet$ to be a map of co-FI-spaces), so we obtain maps $f_*$ between these spectral sequences by functoriality. In other words, all the Leray spectral sequences for $\Conf_S(M)\into M^S$ together form a cohomological spectral sequence of FI-modules with edge terms $E_2^{p,0}\simeq H^p(M^\bullet;\Q)$ converging to the graded FI-algebra $H^*(\Conf(M);\Q)$.

Each page $E_r$ forms a bigraded FI-algebra, which we can think of as a graded FI-algebra by $(E_r)^i\simeq \bigoplus_{0\leq p\leq i}E_r^{p,i-p}$. Since this sum is finite, there is no ambiguity in asking whether $E_r$ is of finite type: this means that $E_r^{p,q}$ is a finitely generated FI-module for all $p\geq 0$ and all $q\geq 0$. 

Totaro~\cite{To} provides an explicit description of the $E_2$ page, which in particular gives an isomorphism $E_2^{0,d-1}\simeq H^{d-1}(\Conf(\R^d);\Q)$. This FI-module can be identified with $M(\y{2})$ when $d$ is even, \remove{as we saw in Example~\ref{example:arnoldalgebra} for $d=2$, } and with $M(\y{1,1})$ when $d$ is odd; in particular, the FI-module $E_2^{0,d-1}$ is finitely generated. Moreover, Totaro shows that the graded FI-algebra $E_2$  is generated by $E_2^{0,d-1}$ together with $E_2^{p,0}$ for all $p\geq 0$. (For future reference, this implies that $E_r^{p,\ast}=0$ unless $\ast=q(d-1)$.)

$E_2^{*,0}\simeq H^*(M^\bullet;\Q)$ is of finite type by Proposition~\ref{pr:cohoMbullet}, and $E_2^{0,d-1}$ is finitely generated, so Totaro's generating set is a graded FI-module of finite type. Theorem~\ref{thm:fisubalgebra} then implies that the graded FI-algebra $E_2$ is of finite type. Since $E_\infty$ is a subquotient of $E_2$, Theorem~\ref{th:noetherian} implies that $E_\infty$ is of finite type. The FI-module $H^i(\Conf(M);\Q)$ has a finite filtration with graded quotients $E_\infty^{p,i-p}$, so $H^*(\Conf(M);\Q)$ is of finite type if and only if $E_\infty$ is; this concludes the proof.
\end{proof}

Combined with Theorem~\ref{th:FIequiv}, Theorem~\ref{theorem:coho:config2} implies the theorem of Church~\cite[Theorem 1]{Ch} that the sequence of $S_n$-representations $\{H^i(\Conf_n(M);\Q)\}$ is uniformly representation stable.  By applying this result to the trivial representation, Church extended the classical theorem on rational homological stability for the configuration spaces of \emph{unordered} points  in an open manifold to configuration spaces of unordered points in an arbitrary manifold.

\begin{remark}
The key technical innovation in \cite{Ch} was the notion of \emph{monotonicity}, which allows representation stability to be transmitted from the initial term of a spectral sequence to its $E_\infty$ term.  We showed in the proof of Theorem~\ref{th:FIequiv} that, for FI-modules, finite generation implies monotonicity.  Thus it is not surprising that we are able to imitate the argument of \cite{Ch} in the present paper.
\end{remark}

\subsection{Bounding the stability degree for cohomology of configuration spaces}
The results of Church~\cite{Ch} not only show that $H^i(\Conf_n(M);\Q)$ is representation stable, but they provide an explicit stable range. In this section we strengthen those results by computing explicit bounds on the stability degree for the FI-module $H^i(\Conf(M);\Q)$, which let us improve Church's stable range.

Recall from Definition~\ref{def:injectivitydegree} that we divided the notion of stability degree into \emph{injectivity degree} and \emph{surjectivity degree}. For succinctness, for an FI-module $V$ we write $V\stabT (A,B)$ if $\injD(V)\leq A$ and $\surjD(V)\leq B$; this implies  $\stabD(V)\leq \max(A,B)$.

\begin{thm}
\label{th:config}
Let $M$ be a connected, oriented manifold of dimension $d \geq 3$.   For any $i\geq 0$, the FI-module $H^i(\Conf(M);\Q)$ has weight $\leq i$ and $H^i(\Conf(M);\Q)\stabT(i+2-d,i)$; in particular $\stabD(H^i(\Conf(M);\Q))\leq i$. 
\end{thm}
We first record two elementary homological facts that we will use in the proof, which follow from the fact that $\Phi_a$ is exact over rings containing $\Q$. Both can be verified  by a simple diagram chase.
\begin{lemma}
\label{lemma:homologyfacts}
Assume that $k$ contains $\Q$. 
\begin{enumerate}[(i)]
\item Consider a complex of FI-modules $X\xrightarrow{f} Y\xrightarrow{g} Z$ with $g\circ f=0$. If $X\stabT(A,B)$, $Y\stabT(C,D)$, and $Z\stabT(E,F)$, then the homology $\ker g/\im f$ satisfies $\ker g/\im f\stabT\big(\max(B,C),\ \max(D,E)\big)$.
\item If $V$ is an FI-module with a finite filtration by sub-FI-modules $F^iV\subset V$, then $V\stabT(A,B)$ if and only if $F^iV/F^{i-1}V\stabT(A,B)$ for all $i$.
\end{enumerate}
\end{lemma}

\begin{proof}[Proof of Theorem~\ref{th:config}]
We continue with the notation from the proof of Theorem~\ref{theorem:coho:config2}. 
Let $D=d-1$.
We saw in that proof that $E_2^{p,\ast}=0$ unless $\ast=qD$.
Moreover, Church~\cite[\S3.3]{Ch} gives an explicit description of $E_2^{p,qD}$ as a direct sum $E_2^{p,qD}\simeq \bigoplus_{k=0}^{p+2q}M(W_k)$, where $W_k$ is a certain representation of $S_k$. By Proposition~\ref{pr:stabdegprojective}, this implies that $E_2^{p,qD}\stabT(0,p+2q)$ and $\wt(E_2^{p,qD})\leq p+2q$.

The argument proceeds by induction on the pages of the Leray spectral sequence. To be precise, the fact that  $E_2^{p,\ast}=0$ unless $\ast=qD$ implies that the only nontrivial differentials occur on pages $E_{kD+1}^{p,qD}$, with $E_{(k-1)D+2}\simeq \cdots\simeq E_{kD}\simeq E_{kD+1}$. We will prove the following claims $(a_k)$ and $(b_k)$ for all $k \geq 2$:
\begin{itemize}
\item[($a_k$)] $\injD(E_{kD+1}^{p,qD})\leq p+2q+1-D$ for all $p\geq 0$ and all $q\geq 0$
\item[($b_k$)] $\surjD(E_{kD+1}^{p,qD})\leq p+2q +\min(k-2,q-1)(D-2)$ for all $p\geq 0$ and all $q > 0$. 
\end{itemize}

We begin with the base case $k=2$. We have isomorphisms $E_2\simeq E_{D+1}$ and $E_{D+2}\simeq E_{2D+1}$. The only nontrivial intervening differential is on the $E_{D+1}$ page, where $E_{2D+1}$ is computed as the cohomology of the complex
\beq
E_{D+1}^{p-D-1,(q+1)D} \ra E_{D+1}^{p,qD} \ra E_{D+1}^{p+D+1,(q-1)D}.
\eeq
From $E_2\simeq E_{D+1}$ we know that $E_{D+1}^{p,qD}\stabT(0,p+2q)$, so 
\beq
E_{D+1}^{p-D-1,(q+1)D} \stabT (0, p + 2q + 1 - D),\quad E_{D+1}^{p,qD}\stabT(0,p+2q),\quad E_{D+1}^{p+D+1,(q-1)D}\stabT(0,p+2q+ D - 1).
\eeq
Applying Lemma~\ref{lemma:homologyfacts}(i) shows that $E_{2D+1}^{pq}\stabT(p+2q+1-D, p+2q)$, verifying $(a_2)$ and $(b_2)$.

Suppose that $(a_k)$ and $(b_k)$ hold.
We can write $E_{(k+1)D+1}^{p,qD}$ as the cohomology of a three-term complex:
\begin{equation}
\label{eq:kcomplex}
E_{kD+1}^{p-kD-1,(q+k)D} \ra E_{kD+1}^{p,qD} \ra E_{kD+1}^{p+kD+1,(q-k)D} 
\end{equation}
For readability, let $X\to Y\to Z$ denote the FI-modules making up this complex \eqref{eq:kcomplex}.
By $(a_k)$ we have $\injD(Y%E_{kD+1}^{p,qD}
)\leq p+2q+1-D$, and by $(b_k)$ we have
\beq
\surjD(X%E_{kD+1}^{p-kD-1,(q+k)D}
)\leq p-kD -1 + 2(q+k) + (k-2)(D-2) = p+2q + 3 -2D\leq p+2q+1-D,
\eeq
where the last inequality holds because $D\geq 2$. We conclude that $\injD(E_{(k+1)D+1}^{p,qD})\leq p+2q+1-D$ by Lemma~\ref{lemma:homologyfacts}(i), verifying $(a_{k+1})$. 

Assume that $k>q>0$. In this case $Z=0$, since $E_{kD+1}^{p+kD+1,(q-k)D}$ lies outside the first quadrant, so $\surjD(E_{(k+1)D+1}^{p,qD})\leq \surjD(Y)$. By $(b_k$) we have $\surjD(Y)\leq p+2q +\min(k-2,q-1)(D-2)\leq p+2q +\min(k+1-2,q-1)(D-2)$, verifying $(b_{k+1})$ when $k>q>0$.

Finally, assume that $k\leq q$. By $(a_k)$, $\injD(Z)\leq 
p + kD + 1 + 2(q-k) + 1 - D = p+2q+ (k-1)(D-2)$.
 Similarly, $(b_k)$ implies $\surjD(Y)\leq p+2q +\min(k-2,q-1)(D-2)=p+2q+(k-2)(D-2)$. 
 By  Lemma~\ref{lemma:homologyfacts}(i), $\surjD(E_{(k+1)D+1}^{p,qD})\leq p+2q+(k-1)(D-2)$, verifying $(b_{k+1})$ when $k\leq q$.

We now deduce the theorem from the claims $(a_k)$ and $(b_k)$. The FI-module $H^i(\Conf(M);\Q)$ has a finite filtration with graded quotients those $E_\infty^{p,qD}$ with $p+qD=i$. Thus by Lemma~\ref{lemma:homologyfacts}(ii), it suffices to show that $E_\infty^{p,qD}\stabT(i+2-d,i)$ and $\wt(E_\infty^{p,qD})\leq i$ whenever $p+Dq=i$.

Since $E_\infty^{p,qD}$ is a subquotient of $E_2^{p,qD}$, we automatically have $\wt(E_\infty^{p,qD})\leq p+2q\leq i$.
As $k\to \infty$, the claims $(a_k)$ imply $\injD(E_{\infty}^{p,qD})\leq p+2q+1-D\leq i+2-d$. Similarly, when $q>0$ the claims $(b_k)$ imply $\surjD(E_\infty^{p,qD})\leq p+Dq + 2 - D\leq i$, so in this case we have $E_\infty^{p,qD}\stabT(i+2-d,i)$ as desired. When $q=0$, the FI-module $E_{\infty}^{p,0}$ is a quotient of $E_2^{p,0}$, and we showed at the beginning of the proof that $\surjD(E_2^{p,0})\leq p$. We therefore have $\surjD(E_\infty^{p,0})\leq p=i$, with no need for induction, verifying that $E_\infty^{p,0}\stabT(i+2-d,i)$ as well.
\end{proof}

Combining Theorem~\ref{th:config}  with Theorem~\ref{thm:persinomial} proves Theorem~\ref{thm:confncharpoly} from the introduction.

\begin{remark}
%\label{re:dim2}
If $M$ is a connected oriented manifold of dimension 2 with $\dim_\Q(H^*(M;\Q))<\infty$, the weight of the FI-module $H^i(\Conf M;\Q)$ is bounded by that of $\bigoplus_i E^{p,i-p}_2$, just as in the proof of Theorem~\ref{th:config}.  This weight is $p + 2(i-p)$, which is maximized when $p=0$.  So in this case $H^i(\Conf M;\Q)$ is a finitely generated FI-module of weight at most $2i$, whence the characters and Betti numbers of $H^i(\Conf_n(M);\Q)$ are eventually polynomial of degree $\leq 2i$.
\end{remark}

\para{A ``classical'' application}  We can apply Theorem~\ref{th:config} to prove a cohomological stability result, in the usual sense, for unordered configuration spaces with some population of colored points. This result is inspired by Vakil--Wood~\cite{VakilWood}, who proved a motivic analogue when $M$ is an algebraic variety.

Given a partition $\mu=(\mu_1,\ldots,\mu_k)$, we denote by $B_{n,\mu}(M)$ the configuration space of sets of $n$ distinct  unordered points on $M$, with $\mu_i$ of the points labeled with color $i$ and $n-\abs{\mu}$ of the points left uncolored. 
The following corollary is a direct improvement on \cite[Theorem 5]{Ch}, where the bound $n\geq \max(2i,2\abs{\mu})$ was obtained. It is the bound on stability degree in Theorem~\ref{th:config} that allows us to improve the stable range from $\max(2i,2\abs{\mu})$ to $i+\abs{\mu}$.
\begin{corollary} 
\label{co:vakilwood}
 Let $M$ be a connected, oriented manifold of dimension $\geq 3$.  Then  for all $n \geq i + \abs{\mu}$,
\[
H^i(B_{n,\mu}(M);\Q) \simeq H^i(B_{n+1,\mu}(M);\Q).
\]
\end{corollary}

\begin{proof}
The invariant subspace $H^i(\Conf_n(M);\Q)^{S_{n-\abs{\mu}}}$ carries an action of $S_{\abs{\mu}}$, and 
the dimension of $H^i(B_{n,\mu}(M);\Q)$ is the dimension of the space fixed by the action of
\beq
S_\mu \coloneq  S_{\mu_1} \times \ldots \times S_{\mu_k} \subset S_{\abs{\mu}}.
\eeq

The statement that $H^i(\Conf_n(M);\Q)$ has stability degree $\leq i$ means by definition that the isomorphism class of  $H^i(\Conf_n(M);\Q)^{S_{n-\abs{\mu}}}\simeq H^i(\Conf_n(M);\Q)_{S_{n-\abs{\mu}}}$ as an $S_{\abs{\mu}}$-representation is constant for $n\geq i+\abs{\mu}$. The dimension of the $S_{\abs{\mu}}$-invariant subspace is thus constant for $n\geq i+\abs{\mu}$ as well.
\end{proof}

\subsection{Configurations on manifolds with boundary as homotopy \texorpdfstring{$\FIsharp$-space}{FI\#-space}}
\label{ss:manifoldswithboundary}

We saw in Remark~\ref{remark:FIspaces} that $M^\bullet$ could be extended from a co-FI-space to an $\FIsharp$-space. This is not possible for $\Conf(M)$. However, when $M$ is the interior of a compact manifold with nonempty boundary, we can in fact realize the configuration space $\Conf(M)$ as an $\FIsharp$-space \emph{up to homotopy}. 

\begin{definition}
An \emph{homotopy $\FIsharp$-space} is a functor $X$ from $\FIsharp$ to $\hTop$, the category of topological spaces and \emph{homotopy classes} of continuous maps. Concretely, this means that for each $n$ we have a space $X_n$, for each partial injection $f\in \Hom_{\FIsharp}(\m,\n)$ we have a map $f_*\colon X_m\to X_n$ (or rather a homotopy class of maps), and the corresponding diagrams all commute up to homotopy.  Since a homotopy class of maps induces a well-defined map on homology and cohomology, both the homology groups $H_i(X;k)$ and cohomology groups $H^i(X;k)$ form an $\FIsharp$-module.
\end{definition}

\begin{prop}
\label{pr:CMhTop}
Let $M$ be the interior of a connected, oriented, compact manifold $\overline{M}$ of dimension $d\geq 2$ with nonempty boundary $\partial\overline{M}$.  We can extend the FI-space $\Conf(M)\colon \coFI\to \Top$ to a \emph{homotopy $\FIsharp$-space} $\Conf(M)\colon \FIsharp\to \hTop$.
\end{prop}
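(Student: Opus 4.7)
The plan is to extend the natural co-FI-space structure on $\Conf(M)$ with maps in the opposite direction that ``add points'' to configurations; the nonempty boundary of $\overline{M}$ provides the room. Any $\FIsharp$-morphism $(A,B,\phi)\colon S\to T$ factors as a forgetting step $S\to A$, a relabeling bijection $A\to B$, and an inclusion $B\hookrightarrow T$; the first two are already encoded by the co-FI-space structure on $\Conf(M)$, so the only new ingredient needed is a homotopy-canonical map $\Conf_B(M)\to\Conf_T(M)$ for each inclusion $B\subset T$.

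First I fix structural data from the collar. Pick an embedded open collar $c\colon \partial\overline{M}\times[0,1)\hookrightarrow M$ and a basepoint $q_0\in\partial\overline{M}$, and set $p_n\coloneq c(q_0,1-2^{-n})$ for $n\geq 1$; these are distinct points accumulating on the boundary. For each $k\geq 0$ fix a self-embedding $\iota_k\colon M\hookrightarrow M$ that is isotopic to $\id_M$ and whose image is disjoint from an open neighborhood of $\{p_1,\ldots,p_k\}$; such $\iota_k$ exists because one can isotope $M$ to itself by sliding inward along the $[0,1)$-direction of the collar, with $\iota_0=\id_M$. For a morphism $(A,B,\phi)\colon S\to T$ with $k\coloneq|T\setminus B|$, choose any bijection $\ell\colon T\setminus B\to\{1,\ldots,k\}$ and define
\[
\Phi_{(A,B,\phi)}\colon\Conf_S(M)\to\Conf_T(M),\qquad (x_s)_{s\in S}\longmapsto (y_t)_{t\in T},
\]
where $y_t=\iota_k(x_{\phi^{-1}(t)})$ for $t\in B$ and $y_t=p_{\ell(t)}$ for $t\in T\setminus B$. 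The image lands in $\Conf_T(M)$ since $\iota_k(M)$ misses a neighborhood of the $p_i$ and the $p_i$ are distinct.

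Next I verify that $\Phi_{(A,B,\phi)}$ is well-defined up to homotopy, independent of the auxiliary choices $\iota_k$ and $\ell$. Any two admissible embeddings $\iota_k,\iota_k'$ are connected by a path in the space of embeddings $M\hookrightarrow M$ missing a neighborhood of $\{p_1,\ldots,p_k\}$, since we can first isotope each to $\id_M$ and then concatenate with an ambient isotopy compressing $M$ back into the complement of the collar tail; and any two choices of $\ell$ differ by a permutation $\sigma$ of $\{1,\ldots,k\}$, which gives a path of ordered $k$-tuples of distinct points in $M\setminus\iota_k(M)$ from $(p_1,\ldots,p_k)$ to $(p_{\sigma(1)},\ldots,p_{\sigma(k)})$ (this configuration space is path-connected because $d\geq 2$ and the complement contains the free region of the collar). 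Both deformations induce homotopies of the map $\Phi_{(A,B,\phi)}$.

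Finally I check functoriality in $\hTop$. The identity morphism $(S,S,\id_S)$ has $k=0$, so $\Phi_{(S,S,\id_S)}$ uses $\iota_0=\id_M$ and no inserted points, i.e.\ it is the identity of $\Conf_S(M)$. For a composition $(A',B',\phi')\circ(A,B,\phi)$ with composite morphism $(A'',B'',\phi'')$, the composite map $\Phi_{(A',B',\phi')}\circ\Phi_{(A,B,\phi)}$ applies $\iota_{k'}\circ\iota_k$ to the surviving entries of the original configuration and inserts points from $\{p_i\}$ in two waves, while $\Phi_{(A'',B'',\phi'')}$ applies a single $\iota_{k''}$ and inserts all new points in one wave. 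Both maps factor in the form (embedding isotopic to $\id_M$ missing the chosen $p$'s) composed with (an insertion of distinct collar points indexed by $R\setminus B''$), and by the previous paragraph any two such factorizations are homotopic. The main technical obstacle is precisely this last step: making the homotopy rigorous requires noting that the space of ``admissible insertion data'' (a self-embedding $\iota\colon M\hookrightarrow M$ isotopic to $\id_M$ together with a tuple of distinct points in $M\setminus\iota(M)$ indexed by $R\setminus B''$) is non-empty and path-connected for every finite indexing set, so any continuous construction using this data yields a map canonical in $\hTop$, and functoriality follows.
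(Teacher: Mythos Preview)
Your approach is essentially the same as the paper's: compress the existing configuration inward via a self-embedding isotopic to the identity, then insert the new points in the vacated collar region, and argue that the resulting map depends only on a path-connected space of choices. The paper streamlines this by fixing a single homeomorphism $\varphi\colon M\to M\setminus R$ onto the complement of a fixed collar $R$, independent of how many points are being added, so that the only choice is the configuration $g\in\Conf_{S\setminus B}(R)$ of inserted points; connectedness of $\Conf_{S\setminus B}(R)$ then immediately gives well-definedness up to homotopy, and functoriality reduces to the same observation.

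One small wrinkle in your write-up: when you argue that any two admissible $\iota_k$ are connected ``since we can first isotope each to $\id_M$'', note that $\id_M$ itself does not miss $\{p_1,\ldots,p_k\}$, so this isotopy leaves the space of embeddings you are working in. The claim is still true, but the cleanest fix is the paper's: use one fixed compression $\iota$ whose image misses an entire collar neighborhood (hence all the $p_i$ simultaneously), so that the only remaining choice is the labeling of inserted points, handled by connectedness of the collar's configuration space. This also makes your functoriality check straightforward, since $\iota\circ\iota$ and $\iota$ are then visibly connected by the collar flow within the relevant embedding space.
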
\pagebreak

\begin{proof} We will use a variant of the $\FIsharp$-space structure on $M^\bullet$ defined in \eqref{eq:MbulletFIsharp}, modified so as to preserve the injectivity of our configurations $S\into M$.
The fact that $\partial\overline{M}\neq\emptyset$ lets us define, for any inclusion of finite sets $B\subseteq T$, a map (defined up to homotopy)
\[\Psi_B^T\colon \Conf_B(M)\to\Conf_T(M)\]
which ``adds points at infinity'', as follows.  Fix a collar neighborhood $R$ of one component of  $\partial\overline{M}$ (so that $R$ is connected), and fix a homeomorphism $\Phi\colon M\xrightarrow{\simeq} M\setminus \overline{R}$ isotopic to the identity $\id\colon M\to M$.

If $T=B$ we set $\Psi_B^T=\id$. Otherwise, fix a configuration $q_B^T\colon (T-B)\into R$ in $\Conf_{T-B}(R)$. Then any embedding $\varphi\colon B\into M$ in $\Conf_B(M)$ can be extended to a function $\Psi_B^T(\varphi)\colon T\into M$ by defining:
\[\Psi_B^T(\varphi)(t)=\begin{cases} \Phi(\varphi(t))&t\in B\\q_B^T(t)&t\in T- B\end{cases}\] Since $\Conf_{T-B}(R)$ is connected, different choices of $q_B^T\in \Conf_{T-B}(R)$ induce homotopic maps, so  the map $\Psi_B^T$ is well-defined up to homotopy.

For a morphism $f\colon S\supset \phi^{-1}(B)\xrightarrow{\phi}B\subset T$, let $f_*\colon \Conf_S(M)\to \Conf_T(M)$ be the composition 
\[f_*\colon \Conf_S(M)\xrightarrow{-\circ\phi^{-1}}\Conf_B(M)\xrightarrow{\Psi_B^T}\Conf_T(M).\]
Let $g\colon T\supset C\xrightarrow{\psi} \psi(C)\subset U$ be another morphism, and let us compare the two maps $g_*\circ f_*$ and $(g\circ f)_*\colon \Conf_S(M)\to \Conf_U(M)$. The restriction $(g\circ f)_*(\varphi)|_{U-\psi(B\cap C)}$ is equal to $q_B^T$ for all $\varphi\in \Conf_S(M)$; the restriction $g_*\circ f_*(\varphi)|_{U-\psi(B\cap C)}$ is similarly constant (though not the same). On the remaining elements $\psi(B\cap C)$, if $u=\psi(\phi^{-1}(s))$ we have $(g\circ f)_*(\varphi)(u)=\Phi(\varphi(s))$ and $g_*\circ f_*(\varphi)(u)= \Phi(\Phi(\varphi(s)))$. Using the isotopy from $\Phi$ to the identity, we can thus construct a homotopy from $g_*\circ f_*$ to $(g\circ f)_*$. This completes the proof that $\Conf(M)$ is a homotopy $\FIsharp$-space.
\end{proof}

It follows from Proposition~\ref{pr:CMhTop} that the FI-module $H^*(\Conf(M);k)$ is in fact an $\FIsharp$-module in this case, for any ring $k$. In particular, all of the maps  $H^*(\Conf_n(M);k)\to H^*(\Conf_{n+m}(M);k)$ are injective since every inclusion $f\colon S\into T$ has a left inverse in $\FIsharp$; similarly, all of the maps $H^*(\Conf_{n+m}(M);k)\to H^*(\Conf_n(M);k)$ are surjective.

\begin{theorem}
\label{thm:ConfMFIsharpfg}
Let $M$ be a connected, oriented manifold of dimension $d\geq 2$ which is the interior of a compact manifold with nonempty boundary.  Let 
$k$ be a Noetherian ring of finite Krull dimension. For each $i\geq 0$, the $\FIsharp$-module $H^i(\Conf(M);k)$ is finitely generated in dimension $\leq i$ if $d\geq 3$, and in dimension $\leq 2i$ if $d=2$.
\end{theorem}
\begin{proof}
We have already proved the theorem in the case $k=\Q$; the content lies in the case when $k$ has positive characteristic, or is not a field.
We recall from  the proof of Theorem~\ref{th:config} that each entry $E_2^{pq}$  of the Leray spectral sequence $\Conf(M) \into M^\bullet$ is a finitely generated $\FIsharp$-module. 
However, we cannot immediately conclude that $E_\infty^{pq}$ or $H^i(\Conf(M);k)$ is finitely generated, even though this spectral sequence converges to $H^*(\Conf(M);k)$. The problem is that $\Conf(M)$ is only a \emph{homotopy} $\FIsharp$-space, and the Leray spectral sequence is not homotopy invariant. As a result it is not a spectral sequence of $\FIsharp$-modules.

Our hypotheses on $k$ will allow us to circumvent this issue. We first explain the proof in the cases when $k$ is either $\Z$ or a field, which were mentioned in the introduction, and afterwards explain how to get the general case. 
First assume that $k$ is a field. From the proof of Theorem~\ref{th:config}, each entry $E_2^{p,q(d-1)}$ of the Leray spectral sequence is a finitely generated $\FIsharp$-module generated in degree $\leq p+2q$. When $d\geq 3$, it follows that $\dim_k (E_2^{pq})_n=O(n^{p+q})$. As a vector space $(E_\infty^{p,q})_n$ is a subquotient of $(E_2^{pq})_n$, so $\dim_k (E_\infty^{pq})_n=O(n^{p+q})$ as well. Finally, $H^i(\Conf_n(M);k)$ has a finite filtration with quotients $(E_\infty^{p,i-p})_n$, so we conclude that $\dim_k H^i(\Conf_n(M);k) = O(n^i)$. By Theorem~\ref{thm:polyFIsharp}, this implies that the $\FIsharp$-module $H^i(\Conf(M);k)$ is finitely generated in degree $\leq i$, as the theorem claims. When $d=2$ we have $\dim_k(E_2^{pq})_n=O(n^{p+2q})$, so $\dim_k H^i(\Conf_n(M);k) = O(n^{2i})$.

The same argument works when $k=\Z$, using the fact that the number of generators of a $\Z$-module decreases when passing to submodules. Of course, this fact is not true for a general ring; for example, $\C[x,y]$ has rank 1 as a module over itself, but the ideal $(x,y)$ cannot be generated by fewer than 2 elements. However, Forster \cite[Satz~1]{Forster} proved the following theorem for a Noetherian ring $k$ of Krull dimension $d$: if a $k$-module $A$ satisfies 
 \[\qquad\quad\dim_\F(A\otimes_k \F)\leq N\qquad\quad\text{for all quotient fields } \F\coloneq k/\mathfrak{m},\]
then $A$ is generated by at most $N+d$ elements.

We want to apply Forster's theorem to $H^i(\Conf_n(M);k)$. The discussion above shows for $d\geq 3$ that  $\dim_\F(E_2^{pq})_n\otimes_k \F = O(n^{p+q})$. Since $\F$ is a field, this bound passes to subquotients, so $\dim_\F H^i(\Conf_n(M);k)\otimes \F=O(n^{i})$ for all quotient fields $\F$. (Despite the use of big-$O$ notation, we in fact have an explicit upper bound on this dimension which is independent of $\F$, coming from the number of generators for $(E_2^{pq})_n$ in the case $k=\Z$.) Applying Forster's theorem, we conclude that the $k$-module $H^i(\Conf_n(M);k)$ is generated by $O(n^{i}+d)=O(n^{i})$ elements. The  equivalence of (iv) and (i) in Theorem~\ref{thm:polyFIsharp} then implies that the $\FIsharp$-module $H^i(\Conf(M);k)$ is finitely generated in degree $\leq i$, as claimed. The proof for $d=2$ is identical.
\end{proof}

Theorem~\ref{thm:intro:openbetti} follows immediately from Theorem~\ref{thm:ConfMFIsharpfg} by applying the classification of $\FIsharp$-modules from Theorem~\ref{th:charFIsharp}, just as in the proof of Theorem~\ref{thm:polyFIsharp}.

\section{Applications:  cohomology of moduli spaces}
\label{section:moduli}

\subsection{Stability for \texorpdfstring{$\cM_{g,n}$}{M(g,n)} and its tautological ring}
\label{subsection:mgn}
Let $S_g$ be a closed, oriented surface of genus $g\geq 2$.  For each $n\geq 1$ let $(y_1,\ldots ,y_n)$ be an ordered $n$-tuple of $n$ distinct points on $S_g$.  Let $\cM_{g,n}$ denote the moduli space of $n$-pointed Riemann surfaces $(X;y_1,\ldots ,y_n)$ 
homeomorphic to $(S_g;y_1,\ldots ,y_n)$.   The space $\cM_{g,n}$ has the structure of a complex orbifold, i.e.\ the quotient of a complex manifold by a finite group of holomorphic automorphisms.  

The map ``forget the $n^{\rm th}$ point'' yields a fibration 
$p_{n}\colon \cM_{g,n}\to \cM_{g,n-1}$ whose fiber is an $(n-1)$-punctured surface of genus $g$.  For each $k \geq 0$, the map $p_n$ induces a homomorphism 
\[p_n^\ast\colon H^k(\cM_{g,n-1};\Q)\to H^k(\cM_{g,n};\Q).\]

For each $i=1,\ldots,n$, we let $L_i\to \cM_{g,n}$ be the complex line bundle over $\cM_{g,n}$ 
whose fiber at $(X;y_1,\ldots ,y_n)$ is the cotangent space to $X$ at $y_i$.  
Let $\psi_i=c_1(L_i)\in H^2(\cM_{g,n};\Q)$ be the first Chern class of the line bundle $L_i$.  
Integration along the fiber yields a Gysin homomorphism 
\[(p_n)_!:H^j(\cM_{g,n};\Q)\to H^{j-2}(\cM_{g,n-1};\Q).\]
For each $j\geq 0$ define $\kappa_j\coloneq (p_{n+1})_!(\psi_{n+1}^{j+1})\in H^{2j}(\cM_{g,n};\Q)$.

The \emph{tautological ring} $\cR(\cM_{g,n})$ is defined to be the subring of $H^*(\cM_{g,n};\Q)$ generated by 
\[\{\,\psi_i\,|\,1\leq i\leq n\}\ \cup\ \{\,\kappa_j\,|\,j\geq 0\}.\]
This ring has been intensively studied by algebraic geometers 
(see e.g.\ \cite{Va}).
The usual grading on $\mathcal{R}(\mathcal{M}_{g,n})$ is half the cohomological grading, so that $\psi_i$ has grading 1 and $\kappa_j$ has grading $j$.

$S_n$ acts on $\cM_{g,n}$ by permuting the marked points. The induced action on $H^*(\cM_{g,n};\Q)$ satisfies
\[ \sigma\cdot \kappa_j=\kappa_j \ \ \text{and}\ \ \ \sigma\cdot\psi_i=\psi_{\sigma(i)}\]
for each $j\geq 1$ and each $1\leq i\leq n$, so $\cR(\cM_{g,n})$ is a subrepresentation of $H^*(\cM_{g,n};\Q)$. 
The grading-$j$ components $\cR^j(\cM_{g,n})$ can 
be quite complicated as $S_n$-representations, and for $g\geq 3$ they are poorly understood.
However, we have the following strong constraint once $n$ is sufficiently large.

\begin{theorem}
%\label{theorem:tautring}
For each $g\geq 2$ the tautological ring $\cR(\cM_{g,\bullet})$ is a graded FI-algebra of finite type.  Thus for each $j\geq 1$ the characters $\chi_{\cR^j(\cM_{g,n})}$ are eventually polynomial of degree $\leq j$.
In particular, $\dim\cR^j(\cM_{g,n})$ is eventually polynomial in $n$ of degree $\leq j$.
\end{theorem}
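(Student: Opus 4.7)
The plan is to realize $\cR(\cM_{g,\bullet})$ as a graded FI-subalgebra generated by a finite-type FI-module of slope $\leq 1$, and then invoke Theorem~\ref{thm:fisubalgebra} followed by Theorem~\ref{thm:persinomial}.

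First I would set up the FI-structure. For $g \geq 2$ every pointed smooth curve is automatically stable, so each injection $f \in \Hom_{\FI}(\m,\n)$ defines a genuine forgetful-and-relabel morphism $f_*\colon \cM_{g,n} \to \cM_{g,m}$ (for $f$ a bijection this is the symmetric-group action, and for the standard inclusions $\m \hookrightarrow \n$ it is a composition of the maps $p_n$). Thus $\cM_{g,\bullet}$ is a co-FI-space, and pullback on cohomology makes $H^*(\cM_{g,\bullet};\Q)$ a graded FI-algebra. Because pullback along a forgetful map sends any $\psi$- or $\kappa$-class to a polynomial in $\psi$- and $\kappa$-classes (a classical computation), and because $\sigma \cdot \psi_i = \psi_{\sigma(i)}$ while $\sigma \cdot \kappa_j = \kappa_j$, the tautological subring $\cR(\cM_{g,\bullet})$ is preserved by every FI-morphism and hence forms a graded FI-subalgebra of $H^*(\cM_{g,\bullet};\Q)$.

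Next I would exhibit an explicit generating sub-FI-module $V \subseteq \cR(\cM_{g,\bullet})$. For each $j \geq 1$ let $V^j$ be the sub-FI-module of $\cR^j(\cM_{g,\bullet})$ generated by $\psi_1 \in \cR^1(\cM_{g,1})$ together with $\kappa_1 \in \cR^1(\cM_{g,0})$ if $j = 1$, and by $\kappa_j \in \cR^j(\cM_{g,0})$ if $j \geq 2$. Then $V^1$ is generated in degree $\leq 1$ and each $V^j$ for $j \geq 2$ is generated in degree $0$, so by Proposition~\ref{pr:fgweight} we have $\wt(V^j) \leq j$ for all $j \geq 1$; equivalently $V = \bigoplus_{j \geq 1} V^j$ has slope $\leq 1$. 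Each $V^j$ is finitely generated (by one or two elements), so $V$ is a graded FI-module of finite type supported in positive grades. Since the $\psi_i$'s and $\kappa_j$'s generate $\cR(\cM_{g,n})$ as a $\Q$-algebra for every $n$, the FI-algebra $\cR(\cM_{g,\bullet})$ is generated as an FI-algebra by $V$.

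Theorem~\ref{thm:fisubalgebra} now delivers that $\cR(\cM_{g,\bullet})$ is a graded FI-algebra of finite type and slope $\leq 1$; in particular each $\cR^j(\cM_{g,\bullet})$ is a finitely generated FI-module of weight $\leq j$. Theorem~\ref{thm:persinomial} then produces a character polynomial of degree $\leq j$ agreeing with $\chi_{\cR^j(\cM_{g,n})}$ for all sufficiently large $n$, and specializing at the identity yields the eventual polynomiality of $\dim \cR^j(\cM_{g,n})$ of degree $\leq j$. The main subtle point is the bookkeeping showing that forgetful-pullback preserves the tautological ring\,---\,in particular, that $p^*\kappa_j$ differs from $\kappa_j$ on the larger moduli space by a polynomial in $\psi$-classes\,---\,but this is standard and poses no real obstruction to the FI-subalgebra claim.
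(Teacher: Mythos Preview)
Your proof is correct and follows essentially the same route as the paper: exhibit the $\psi$- and $\kappa$-classes as a finite-type graded sub-FI-module of slope $\leq 1$, then apply Theorem~\ref{thm:fisubalgebra} followed by Theorem~\ref{thm:persinomial}. You are in fact slightly more careful than the paper in flagging that $p^*\kappa_j$ differs from $\kappa_j$ by a $\psi$-polynomial, a detail the paper's proof silently absorbs into its identification $V^{2i}\simeq M(0)$.
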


\begin{proof}  
Let $\cM_{g,S}$ be the moduli space of genus $g$ Riemann surfaces $X$ endowed with an injection $S\into X$. These spaces form a co-FI-space $\cM_{g,\bullet}$  just as in \S\ref{section:fgconfig}, so the  cohomology $H^*(\cM_{g,\bullet};\Q)$ is a graded FI-algebra.

For each $s\in S$ we have a line bundle $L_s\to \cM_{g,S}$ with first Chern class $\psi_s\coloneq c_1(L_i)\in H^2(\cM_{g,S};\Q)$. Similarly, we consider the map $p_\star\colon \cM_{g,S\disjoint\{\star\}}\to \cM_{g,S}$, and for each $j\geq 0$ we define $\kappa_j\coloneq (p_\star)_!(\psi_\star^{j+1})\in H^{2j}(\cM_{g,S};\Q)$. These classes define a map of graded FI-modules $t\colon V\to H^*(\cM_{g,\bullet};\Q)$, where $V$ is the graded FI-module with $V^{2i+1}=0$ and \[V^2\simeq M(1)\oplus M(0), \quad V^{2i} = \Q \kappa_i  \simeq M(0) \text{ for }i>1.\]
By definition, $\cR(\cM_{g,\bullet})$ is the sub-FI-algebra of $H^*(\cM_{g,\bullet};\Q)$ generated by the image $t(V)$. By construction, $V$ is a graded FI-module of finite type with slope $\frac{1}{2}$, so the same is true of $t(V)$.   Theorem~\ref{thm:fisubalgebra} implies that the sub-FI-algebra of  $H^*(\cM_{g,\bullet};\Q)$ generated by $t(V)$ is a graded FI-module of finite type which has slope $\leq \frac{1}{2}$. Recalling the difference in the grading, we conclude that the FI-module $\cR^j(\cM_{g,\bullet})\subset H^{2j}(\cM_{g,\bullet};\Q)$ is finitely generated in degree $\leq i$. The desired conclusion follows from Theorem~\ref{thm:persinomial}.
\end{proof}

\begin{remark}
Jimenez Rolland~\cite{J1} proved the related theorem that for each fixed $g\geq 2$ and $i\geq 0$, the sequence $\{H^i(\cM_{g,n};\Q)\}$ is a uniformly representation stable sequence of $S_n$-representations.   Since $H^i(\cM_{g,\bullet};\Q)$ is an FI-module, and since $H^\ast(\cM_{g,n};\Q)$ is finite dimensional, Theorem~\ref{th:FIequiv} together with Jimenez Rolland's theorem shows that 
$\{H^\ast(\cM_{g,\bullet};\Q)\}$ is a graded FI-module of finite type.  This result, with bounds on the stability degree, etc., has been worked out by Jimenez Rolland~\cite{J2}  using the theorems in the present paper.
\end{remark}

\subsection{Albanese cohomology of Torelli groups}
\label{subsection:albanese}

The Torelli subgroups $\I_g^1$ and $\IAn$ of the mapping class groups and automorphism groups of free groups, respectively, are of interest in low-dimensional topology and combinatorial group theory (see below for definitions). However, very little  is known about $H^*(\I_g^1;\Q)$ or $H^*(\IAn;\Q)$. In this section we apply the theory of FI-modules to the subalgebra generated by first cohomology.

\begin{definition}[{\bf Albanese cohomology}]
Let $\Gamma$ be a finitely generated group, let $\Gamma^{\rm ab}$ be its abelianization, and let $\psi\colon\Gamma\to \Gamma^{\rm ab}$ be the natural quotient map.    Since $H^1(\Gamma^{\ab};\Q)\simeq H^1(\Gamma;\Q)$ and $H^*(\Gamma^{\rm ab};\Q)\simeq \bwedge^*H^1(\Gamma^{\rm ab};\Q)$, the map $\psi$ induces a homomorphism 
\[\psi^*\colon H^*(\Gamma^{\rm ab};\Q)\simeq \bwedge^*H^1(\Gamma;\Q)\to H^*(\Gamma;\Q).\] We define the \emph{Albanese cohomology} $H_{\rm Alb}^*(\Gamma;\Q)$ of $\Gamma$ to be the image of this map: \[H_{\rm Alb}^*(\Gamma;\Q)\coloneq \psi^*(H^*(\Gamma^{\rm ab};\Q))\subset H^*(\Gamma;\Q)\]
\end{definition}
We use the name ``Albanese cohomology'' in order to keep in mind the frequently encountered case where $\Gamma$ is the fundamental group of a compact Kahler manifold,  in which case $H_{\rm Alb}^*$ is the part of the cohomology coming from the associated Albanese variety.  Clearly $H^*_{\rm Alb}(\Gamma;\Q)$ can also be described as the subalgebra of $H^*(\Gamma;\Q)$ generated by $H^1(\Gamma;\Q)$.

\para{Albanese cohomology of the Torelli group}
Let $S_g^1$ be a compact, oriented genus $g\geq 2$ surface with one boundary component.  The mapping class group $\Mod(S_g^1)$ is the group of path components of the group $\Homeo^+(S_g^1,\partial S_g^1)$ of orientation-preserving homeomorphisms of $S_g^1$ fixing the boundary pointwise.  The action of $\Mod(S_g^1)$ on $H_1(S_g^1;\Z)$ preserves algebraic intersection number, which is a symplectic form on $H_1(S_g^1;\Z)$.  The \emph{Torelli group} $\I_g^1$ is defined to be the kernel of this action, and we have a well-known (see e.g.\ \cite{FM}) exact sequence, 
where $\Sp_{2g}\Z$ is the integral symplectic group:
\[1\to\I_g^1\to\Mod(S_g^1)\to\Sp_{2g}\Z\to 1\]

Very little is known about the cohomology $H^*(\I_g^1;\Q)$ of the Torelli group, or even about the subalgebra $H^*_{\rm Alb}(\I_g^1;\Q)$. For $g=2$, it follows from Mess~\cite{Mess} that $H^1_{\rm Alb}(\I_2^1;\Q)=H^1(\I_2^1;\Q)\simeq \Q^{\infty}$. For $g\geq 3$, Johnson 
proved that $\I_g^1$ is finitely generated, and gave an isomorphism
\begin{equation}
\label{eq:Johnsonhom}
H^1_{\rm Alb}(\I_g^1;\Q)=H^1(\I_g^1;\Q)\simeq \bwedge^3\Q^{2g}
\end{equation}
as $\Sp_{2g}\Z$-modules.   Hain~\cite[\S10]{Ha} computed $H^2_{\rm Alb}(\I_g^1;\Q)$ as an $\Sp_{2g}\Z$-module, and Sakasai~\cite{Sa} did the same for $H^3_{\rm Alb}(\I_g^1;\Q)$.  
We found many examples of nontrivial classes in $H^*_{\rm Alb}(\I_g^{1};\Q)$ in \cite{CF2}; these classes give a lower bound of order $\sim g^{i+2}$ for the dimension of $H^i_{\rm Alb}(\I^1_g;\Q)$.  
Beyond these coarse bounds, nothing is known about the precise dimension of 
$H^i_{\rm Alb}(\I_g^{1};\Q)$.

\begin{theorem}
\label{theorem:albanese1}
For each $i\geq 0$ there exists a polynomial $P_i(T)$ of degree at most $3i$ such that $\dim H^i_{\rm Alb}(\I^1_g;\Q)=P_i(g)$ for $g\gg i$.
\end{theorem}

Although it follows from Johnson's theorem that $\dim H^i_{\rm Alb}(\I^1_g;\Q)$ grows no faster than $O(g^{3i})$, to say that this dimension coincides exactly with a polynomial for large $g$ is much stronger.  
We emphasize that the dimension of $H^i_{\rm Alb}(\I_g^1;\Q)$ is unknown when $i \geq 3$; in particular we do not know what the polynomials produced by Theorem~\ref{theorem:albanese1} are.

In order to prove Theorem~\ref{theorem:albanese1} we will prove that $H^*_{\rm Alb}(\I_\bullet^1;\Q)$ is a graded FI-module of finite type.  One novelty here is that the FI-module structure on $H^*_{\rm Alb}(\I_\bullet^1;\Q)$ is in some sense not natural, but its existence nevertheless allows us to prove Theorem~\ref{theorem:albanese1}. 

We say an \emph{FI-group up to conjugacy} is a functor $\Gamma$ from $\FI$ to the category of groups and homomorphisms-up-to-conjugacy; in other words, for each finite set $S$ we have a group $\Gamma_S$, and for each $f\colon S\into T$ we have maps $f_*\colon \Gamma_S\to \Gamma_T$ so that the relevant diagrams commute up to conjugacy. Since conjugation acts trivially on group homology and cohomology, if $\Gamma$ is an FI-group up to conjugacy, then $H_*(\Gamma;k)$ is a graded FI-module, and $H^*(\Gamma;k)$ is a graded co-FI-algebra.

\begin{proof}[Proof of Theorem~\ref{theorem:albanese1}]
We define the FI-group up to conjugacy $\I_\bullet^1$ as follows.
Fix for each $g$ an ordered symplectic basis $\{a_1,b_1,\ldots,a_g,b_g\}$ for $H_1(S_g^1;\Z)$. Any injection $f\colon \m\into \n$ determines an injection $f_*\colon H_1(S_m^1;\Z)\into H_1(S_n^1;\Z)$ by $f_*(a_i)=a_{f(i)}$ and $f_*(b_i)=b_{f(i)}$. Any embedding $S_m^1\into S_n^1$ determines an inclusion $H_1(S_m^1;\Z)\into H_1(S_n^1;\Z)$ as a symplectic summand.  Such an embedding also determines an injection $\I_m^1\into \I_n^1$, given by 
extending each map to be the identity in the complement.  Moreover, although there are many embeddings $S_m^1\into S_n^1$ inducing a given map on homology, Johnson \cite[Theorem 1A]{JohnsonCR} proved that the resulting injections $\I_m^1\into \I_n^1$ are all conjugate in $\I_n^1$. As a consequence, these maps $f_*\colon \I_m^1\to I_n^1$ make $\I_\bullet^1$ into an FI-group up to conjugacy.

Of course, the co-FI-module structure on $H^*(\I_\bullet^1;\Q)$  is not unique or canonical; it depends on the initial choice of bases 
for $H_1(S_g^1;\Z)$.  But all that matters for us is the existence of a co-FI-module structure on $H^*(\I_\bullet^1;\Q)$ such that $H_1(\I_\bullet^1;\Q)=H^1(\I_\bullet^1;\Q)^*$ is a finitely generated FI-module.

Johnson's identification in \eqref{eq:Johnsonhom} comes from a natural isomorphism $H_1(\I_g^1;\Q)\simeq \bwedge^3(H_1(\Sigma_g^1;\Q))$. With respect to the FI-module structure above, we have $H_1(\Sigma_\bullet^1;\Q)\simeq M(1)\oplus M(1)$, so we obtain an isomorphism $H_1(\I_\bullet^1;\Q)\simeq  \bwedge^3(M(1)\oplus M(1))$. In particular, $H_1(\I_\bullet^1;\Q)$ is a finitely generated FI-module of weight $3$ by  Proposition~\ref{pr:schurfunctor}.  Proposition~\ref{pr:cofisubalgebra} implies that $H^*_{\rm Alb}(\I_\bullet^1;\Q)$ is a co-FI-algebra of finite type and slope $\leq 3$, and the desired conclusion follows from Theorem~\ref{thm:persinomial}.
\end{proof}

\para{Albanese cohomology of $\IAn$}
Let $F_n$ denote the free group of rank $n\geq 0$.  The Torelli subgroup $\IAn$ of $\Aut(F_n)$ is defined to be the subgroup consisting of those automorphisms that act trivially on $H_1(F_n;\Z)\simeq \Z^n$, giving the exact sequence \[1\to \IAn\to \Aut(F_n)\to\GL_n\Z\to 1.\] Magnus proved in 1934 that $\IAn$ is finitely generated for all $n\geq 0$.
The conjugation action of $\Aut(F_n)$ on $\IAn$ induces an action of $\GL_n\Z$ on the homology groups $H_i(\IAn;\Q)$.  Farb, Cohen--Pakianathan and Kawazumi (see e.g.\ \cite{Ka}) independently proved that for all $n\geq 0$, there exists a natural isomorphism
\begin{equation}
\label{eq:H1IAn}
H_1(\IAn;\Z)\simeq \bwedge^2 F_n^{\ab}\otimes_\Z (F_n^{\ab})^\ast
\end{equation}
as $\GL_n\Z$-representations.
 As with the Torelli group, basically nothing is known about $H^*_{\rm Alb}(\IAn;\Q)$;  other than $H^1_{\rm Alb}=H^1$; the only progress is Pettet's computation \cite{Pe} of $H^2_{\rm Alb}(\IAn;\Q)$.
  
\begin{theorem}
\label{theorem:albanese2}
For each $i\geq 0$ there exists a polynomial $P_i(T)$ of degree at most $3i$ such that $\dim H^i_{\rm Alb}(\IAn;\Q)=P_i(n)$ for $n\gg i$.
\end{theorem}

\begin{proof}
The proof proceeds along much the same lines as that of Theorem~\ref{theorem:albanese1}. One simplifying factor is that in this case we can actually define an FI-group $\IA_\bullet$ as follows. To $\n$ we associate the group $\IA_n$ defined above, and to an injection $f\colon \m\to \n$ we associate the following homomorphism $f_*\colon \IA_m\into\IAn$. Let $f_*\colon F_m\into F_n$ denote the inclusion induced by $x_i\mapsto x_{f(i)}$. Given $\varphi\in \IA_m$, we define $f_*\varphi\in \IAn$ to be the automorphism sending $x_{f(i)}\mapsto f_*(\varphi(x_i))$ and $x_j\mapsto x_j$ for those $j\not\in f(\m)$. It is easy to check that $(f\circ g)_*=f_*\circ g_*$, so this defines an FI-group $\IA_\bullet$. In particular, $H^*(\IA_\bullet;\Q)$ is a graded co-FI-algebra.

The isomorphism \eqref{eq:H1IAn} is provided by the Johnson homomorphism, which is compatible with the maps $f_*$, so we have  an isomorphism of FI-modules  
\[
H_1(\IA_\bullet;\Q)\simeq \bwedge^2 M(1) \otimes M(1) \simeq M(\y{2})\oplus M\big(\x\y{1,1}\x\big)\oplus M\big(\x\y{2,1}\x\big) \oplus M\Big(\x\y{1,1,1}\x\Big).
\]

The FI-module $H_1(\IA_\bullet;\Q)\simeq H^1(\IA_\bullet;\Q)^*$ is finitely generated of weight $3$ (we could also deduce this from Propositions~\ref{pr:tensor} and \ref{pr:schurfunctor}). By Proposition~\ref{pr:cofisubalgebra}  the graded co-FI-module $H^*_{\rm Alb}(\IA_\bullet;\Q)$ is of finite type and slope $\leq 3$. Theorem~\ref{thm:persinomial} now implies the theorem, and moreover that  the character of $H^i_{\rm Alb}(\IAn;\Q)$ as an $S_n$-representation is eventually given by a character polynomial.
\end{proof}

\subsection{Graded Lie algebras associated to the lower central series}
\label{section:gradedlielcc}

Recall that the \emph{lower central series}
\[\Gamma=\Gamma_1 >\Gamma_2 > \cdots\]
of a group $\Gamma$ is defined inductively by $\Gamma_1\coloneq
\Gamma$ and $\Gamma_{j+1}\coloneq [\Gamma,\Gamma_j]$. For simplicity, assume that $k$ is a field. The
\emph{associated graded Lie algebra} of $\Gamma$, denoted $\gr(\Gamma)$, is the
Lie algebra over $k$ defined by \[\gr(\Gamma)\coloneq \bigoplus_{j=1}^\infty \gr(\Gamma)_j=
\bigoplus_{j=1}^\infty (\Gamma_j/\Gamma_{j+1})\otimes k\] where the
Lie bracket is induced by the group commutator.  

When $\Gamma$ is finitely generated the graded vector space $\gr(\Gamma)$ is of finite type. 
The natural action of the automorphism group $\Aut(\Gamma)$ on $\Gamma$ preserves each $\Gamma_j$, and so acts on $\gr(\Gamma)$.  It is easy to see that this action factors through an action of $\Aut(\Gamma/[\Gamma,\Gamma])$. In particular, the action of inner automorphisms on $\gr(\Gamma)$ is trivial, so the group $\Out(\Gamma)$ of outer automorphisms of $\Gamma$ naturally acts on $\gr(\Gamma)$.    The action on $\gr(\Gamma)_1$ in grading 1 is nothing more than the representation of $\Aut(\Gamma/[\Gamma,\Gamma])=\Aut(H_1(\Gamma;\Z))$ on $H_1(\Gamma;k)\simeq H_1(\Gamma;\Z)\otimes k$.  

\para{The fundamental group of an FI-space}
The fundamental group $\pi_1$ is not a functor from topological spaces to groups, since it depends on a choice of basepoint; a map $Y\to Z$ only determines a homomorphism $\pi_1(Y)\to \pi_1(Z)$ up to conjugation. As a result, even though the configuration space $\Conf_\bullet(M)$ from \S\ref{section:fgconfig} is a co-FI-space, there is no co-FI-group $\pi_1(\Conf_\bullet(M))$ (only a co-FI-group up to conjugacy).

However, since conjugation acts trivially on $\gr(\pi_1(Z))$, there is a functor from $\Top$  to the category of graded Lie algebras which sends a space $Z$ to the graded Lie algebra $\gr(\pi_1(Z))$. 
Thus if $X$ is an FI-space $X$ (or even just a homotopy FI-space), the associated graded Lie algebra $\gr(\pi_1(X))$ is a graded FI-algebra. Similarly we do have a graded co-FI-algebra $\gr(\pi_1(\Conf_\bullet(M)))$ for any $M$.  When $M$ is the interior of a manifold with boundary as in \S\ref{ss:manifoldswithboundary}, Proposition~\ref{pr:CMhTop} implies that $\gr(\pi_1(\Conf_\bullet(M)))$ is  a graded $\FIsharp$-algebra.
In general, if $\Gamma$ is an FI-group up to conjugacy then  $\gr(\Gamma)$ is a graded FI-module.

\begin{theorem}[{\bf Finite generation of $\gr(\Gamma)$}]
\label{theorem:assoc:graded}
If $\Gamma$ is  an FI-group up to conjugacy (e.g.\ the fundamental group of a homotopy FI-space) and the FI-module $H_1(\Gamma;k)$ is finitely generated, the graded FI-module $\gr(\Gamma)$ is of finite type.
\end{theorem}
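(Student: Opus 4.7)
The plan is to realize $\gr(\Gamma)$ as a quotient of the free Lie FI-algebra on $V\coloneq H_1(\Gamma;k)$ (placed in grade $1$), and then reduce finite generation of each graded piece to finite generation of tensor powers of $V$.

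First I would check that $\gr(\Gamma)$ is a genuine graded FI-Lie-algebra. The hypothesis produces, for each $f\in\Hom_{\FI}(\n,\m)$, a group homomorphism $f_*\colon \Gamma_n \to \Gamma_m$, well-defined up to conjugation, with $(f\circ g)_*$ conjugate to $f_*\circ g_*$. Inner automorphisms of $\Gamma_m$ act trivially on $\gr(\Gamma_m)$ (as noted in the paragraph introducing $\gr(\Gamma)$), so the induced maps on associated graded Lie algebras are strictly well-defined and strictly functorial. This gives $\gr(\Gamma)$ the structure of an FI-module in each grading, with $\gr(\Gamma)_1 = V$, and since each $f_*$ is a group homomorphism it automatically respects commutators, so the Lie bracket on $\gr(\Gamma)$ is FI-equivariant.

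Second, the standard identity $\Gamma_{j+1} = [\Gamma, \Gamma_j]$ yields a surjection of $k$-modules $\gr(\Gamma)_1 \otimes \gr(\Gamma)_j \twoheadrightarrow \gr(\Gamma)_{j+1}$ for every $j\geq 1$, so an induction on $j$ shows that $\gr(\Gamma)$ is generated as a Lie algebra by its grade-$1$ component $V$. Applying the universal property of the free Lie algebra in each degree $n$ separately, and using the FI-equivariance of the Lie bracket established above, I obtain a surjective FI-linear homomorphism of graded FI-Lie-algebras $\L(V) \twoheadrightarrow \gr(\Gamma)$.

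Third, I would argue that each graded piece $\L(V)_j$ is a finitely generated FI-module, and hence so is its quotient $\gr(\Gamma)_j$. The iterated left-normed commutator
\[v_1\otimes\cdots\otimes v_j \;\longmapsto\; [v_1,[v_2,[\cdots,[v_{j-1},v_j]\cdots]]]\]
furnishes an FI-linear surjection $V^{\otimes j} \twoheadrightarrow \L(V)_j$, and Proposition~\ref{pr:tensor} applied $j-1$ times shows that $V^{\otimes j}$ is finitely generated; finite generation passes to the quotient. The only real obstacle is the bookkeeping in the first step: making precise the passage from the ``up-to-conjugacy'' functoriality of $f_*$ on $\Gamma$ to strict functoriality on the associated graded. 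Once this is cleanly set up, the rest of the argument is formal, using nothing beyond the earlier results on free Lie algebras and tensor products of finitely generated FI-modules.
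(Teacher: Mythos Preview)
Your proof is correct and follows essentially the same route as the paper: both observe that $\gr(\Gamma)$ is generated as a Lie algebra by $\gr(\Gamma)_1\simeq H_1(\Gamma;k)$, then deduce finite type from finite generation of $V$. The only difference is cosmetic---the paper invokes Theorem~\ref{thm:fisubalgebra} (which already covers $\L(V)$ and its quotients) as a black box, whereas you unpack that theorem by exhibiting the surjection $V^{\otimes j}\twoheadrightarrow \L(V)_j$ and citing Proposition~\ref{pr:tensor} directly.
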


\begin{proof}
For any group $\Gamma$, the terms $\Gamma_i$ of the lower central series are by definition generated by iterated commutators of elements of $\Gamma$. This shows that $\gr(\Gamma)$ is always generated as a 
Lie algebra by $\gr(\Gamma)_1\simeq H_1(\Gamma;k)$.  Thus if $H_1(\Gamma;k)$ is finitely generated, Theorem~\ref{thm:fisubalgebra} implies the  graded FI-algebra $\gr(\Gamma)$ is of finite type.
\end{proof}

\begin{xample}[{\bf Free groups}]
Let $\Gamma$ be the $\FIsharp$-group which assigns to $S$ the free group $F_S=\langle x_s\,|\, s\in S\rangle$, and to a morphism $f\colon S\supset A\xrightarrow{\phi}B\subset T$ assigns  the map $F_S\to F_T$ given by $\phi(x_s)=x_{\phi(s)}$ if $i\in A$, and $\phi(x_s)=1$ if $i\in S-A$. It is easy to compute that $\gr(\Gamma)_1\simeq M(1)$. Since $\gr(\Gamma)_1$ is finitely generated, Theorem~\ref{theorem:assoc:graded} implies that $\gr(\Gamma)$ is of finite type.  It is known that $\gr(F_n)$ is isomorphic to the free Lie algebra on $n$ variables. Applying Theorem~\ref{th:FIequiv}, we conclude that the graded pieces of the free Lie algebra on $n$ variables are representation stable as representations of $S_n$. A variant of this result, for $\GL_n\C$-representations rather than $S_n$-representations, was proved in \cite[Corollary 5.7]{CF}.
\end{xample}

\begin{xample}[{\bf Pure braid groups}]
\label{example:pbgraded}
We proved in Proposition~\ref{pr:CMhTop} that the configuration space $\Conf_n(\R^2)$ of ordered $n$-tuples of distinct points in the plane is a homotopy $\FIsharp$-space.  The pure braid group $P_n$ on $n$ strands is the fundamental group $P_n=\pi_1(\Conf_n(\R^2))$, so this shows that $P_\bullet$ is an $\FIsharp$-group up to conjugacy.
\remove{We proved in Example~\ref{example:arnoldalgebra} that $H_1(P_\bullet;\Q)\simeq M(\y{2})$, }
\antiremove{One can compute directly that $H_1(P_\bullet;\Q)\simeq M(\y{2})$, }
 so Theorem~\ref{theorem:assoc:graded} implies that $\gr(P_\bullet)$ is a graded $\FIsharp$-module of finite type. 

Hain~\cite{Ha} proved that there is an isomorphism of $S_n$-representations $\gr(P_n)\simeq \gr(\fp_n)$, where $\fp_n$ is the \emph{Malcev Lie algebra} of $P_n$. Applying Theorem~\ref{th:FIequiv} to Example~\ref{example:pbgraded} implies the following theorem, which confirms Conjecture 5.15 of \cite{CF}.

\begin{theorem}[{\bf Representation stability for $\fp_n$}]
\label{theorem:malcevpn}
For each fixed $i\geq 1$, the sequence $\{\fp_n^i\}$ of grading-$i$ pieces of $\fp_n$ is 
a uniformly representation stable sequence of $S_n$--representations.
\end{theorem}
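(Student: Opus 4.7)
The plan is to combine three ingredients already in place: the $\FIsharp$-group structure on $P_\bullet$ established in Example~\ref{example:pbgraded}, the finite generation result Theorem~\ref{theorem:assoc:graded}, and the equivalence of finite generation with uniform representation stability (Theorem~\ref{th:FIequiv}). Hain's isomorphism $\gr(P_n)\simeq \gr(\fp_n)$ of $S_n$-representations will then be used to transport the conclusion from $\gr(P_n)$ to $\fp_n$.

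First I would invoke Example~\ref{example:pbgraded}: since $\Conf_\bullet(\R^2)$ is a homotopy $\FIsharp$-space (Proposition~\ref{pr:CMhTop}) and $\pi_1$ is well-defined up to conjugacy, $P_\bullet$ is an $\FIsharp$-group up to conjugacy. The computation $H_1(P_\bullet;\Q)\simeq M(\y{2})$ from Example~\ref{example:arnoldalgebra} gives in particular that $H_1(P_\bullet;\Q)$ is a finitely generated FI-module. Theorem~\ref{theorem:assoc:graded} therefore applies and produces the graded FI-module of finite type $\gr(P_\bullet)$, meaning that for each fixed $i\geq 1$, the FI-module $\gr(P_\bullet)^i$ is finitely generated.

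Next, since each $\gr(P_n)^i$ is a finite-dimensional $\Q$-vector space (as $P_n$ is finitely generated, every graded piece of its associated graded Lie algebra is finite-dimensional), Theorem~\ref{th:FIequiv} applies and yields that the sequence $\{\gr(P_n)^i\}$ of $S_n$-representations is uniformly representation stable.

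Finally, Hain's theorem provides an isomorphism $\gr(P_n)\simeq \gr(\fp_n)$ of graded $S_n$-representations, and the classical fact that passage to the Malcev Lie algebra does not change the associated graded gives $\gr(\fp_n)^i\simeq \fp_n^i$ as $S_n$-representations for each $i\geq 1$. Combining these identifications with the uniform representation stability of $\{\gr(P_n)^i\}$ yields the desired conclusion for $\{\fp_n^i\}$. The only non-formal input here is Hain's identification of $\gr(P_n)$ with $\gr(\fp_n)$ as $S_n$-representations; everything else is a direct assembly of results already proved in the paper, so there is no real obstacle beyond citing Hain correctly.
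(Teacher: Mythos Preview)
Your proposal is correct and follows essentially the same route as the paper: the paper derives the theorem by applying Theorem~\ref{th:FIequiv} to the finite-type conclusion of Example~\ref{example:pbgraded} (which in turn rests on Proposition~\ref{pr:CMhTop}, the identification $H_1(P_\bullet;\Q)\simeq M(\y{2})$, and Theorem~\ref{theorem:assoc:graded}), together with Hain's isomorphism $\gr(P_n)\simeq \gr(\fp_n)$ of $S_n$-representations. Your write-up is slightly more explicit about the intermediate steps, but the argument is the same.
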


Drinfeld--Kohno (see \cite{Ko}) actually found an explicit presentation of $\fp_n$; for another approach to Theorem~\ref{theorem:malcevpn}, we could apply Theorem~\ref{thm:fisubalgebra} directly to their presentation.

\begin{remark}
The pure braid groups $P_n$ are examples of \emph{pseudo-nilpotent} groups, so that 
$H^*(\fp_n;\Q)\simeq H^*(P_n;\Q)$ for all $n$.  It was already proved in \cite{CF} that the sequence $\{H^i(P_n;\Q)\}$ is uniformly representation stable for each $i \geq 0$, so one is tempted to derive Theorem~\ref{theorem:malcevpn} directly from  \cite[Theorem 5.3]{CF}, which states the equivalence of uniform representation stability for a Lie algebra and for  its (co)homology.  However, that theorem was only proved in \cite{CF} in the context of stability for $\SL_n\C$-representations and $\GL_n\C$-representations. Indeed the ``strong stability'' hypothesis assumed in that theorem almost never holds for $S_n$-representations (and it does not hold here).
\end{remark}
\end{xample}

\begin{xample}[{\bf The Torelli group}]
In \S\ref{subsection:albanese} we encountered the Torelli group $\I_g^1$.   The action of 
the mapping class group $\Mod(S_g^1)$ on $\I_g^1$ by conjugation induces a well-defined action of $\Sp_{2g}\Z$ on $\gr(\I_g^1)$, taking $k=\Q$.    A finite presentation for $\gr(\I_g^1)$ as a Lie algebra has been given by Habegger--Sorger~\cite{HS}, extending the fundamental computation of Hain~\cite{Ha} in the  case of closed surfaces.  Hain also worked out the first few graded terms of this Lie algebra explicitly as $\Sp_{2g}\Z$-representations. Getzler--Garoufalidis (personal communication) have recently given more detailed computations in this direction. However, exact computations in arbitrary degrees seem out of reach. The situation for $\gr(\IA_n)$ as a $\GL_n\Z$-representation is the same.  Even so, we have the following theorems.

\begin{theorem}
For each $i\geq 0$ the dimension $\dim(\gr(\I_g^1)^i)$ is polynomial in $g$ for $g\gg i$.\end{theorem}
\begin{theorem}
For each $i\geq 0$ the dimension $\dim(\gr(\IAn)^i)$ is polynomial in $n$ for $n\gg i$.
\end{theorem}

\begin{proof}
We showed in the proof  of Theorems~\ref{theorem:albanese1} and \ref{theorem:albanese2} that $\I_\bullet^1$ is an FI-group up to conjugacy and $\IA_\bullet$ is an FI-group. Moreover, we saw that the FI-modules $H_1(\I_\bullet^1;\Q)\simeq  \bwedge^3(M(1)\oplus M(1))$ and $H_1(\IA_\bullet;\Q)\simeq \bwedge^2 M(1) \otimes M(1)$ are finitely generated. Theorem~\ref{theorem:assoc:graded} now implies that the graded FI-algebras $\gr(\I_\bullet^1)$ and $\gr(\IA_\bullet)$ are of finite type, and the claim follows from Theorem~\ref{thm:persinomial}.
\end{proof}

\end{xample}

\begin{footnotesize}
\noindent
\begin{tabular*}{\linewidth}[t]{@{}p{\widthof{Department of Mathematics}+0.5in}@{}p{\widthof{Department of Mathematics}+0.5in}@{}p{\linewidth - \widthof{Department of Mathematics} - \widthof{Department of Mathematics}-1in}@{}}
{\raggedright
Thomas Church\\
Department of Mathematics\\
Stanford University\\
450 Serra Mall\\
Stanford, CA 94305\\
\myemail{church@math.stanford.edu}}
&
{\raggedright
Jordan Ellenberg\\
Department of Mathematics\\
University of Wisconsin\\
480 Lincoln Drive\\
Madison, WI 53706\\
\myemail{ellenber@math.wisc.edu}}
&
{\raggedright
Benson Farb\\
Department of Mathematics\\
University of Chicago\\
5734 University Ave.\\
Chicago, IL 60637\\
\myemail{farb@math.uchicago.edu}}
\end{tabular*}\hfill
\end{footnotesize}

\end{document}